\tikzset{
    partial ellipse/.style args={#1:#2:#3}{
        insert path={+ (#1:#3) arc (#1:#2:#3)}
    }
}
\newtheorem{theorem}{Theorem}
   \newtheorem*{TTThm}{Theorem~\ref{ToTThm}}
   \newtheorem*{TDThm}{Theorem~\ref{TTDThm}}
   \newtheorem*{FnThm}{Theorem~\ref{Fn}}
   \newtheorem*{Fduality}{Theorem~\ref{submodular duality}}
\newtheorem{lemma}[theorem]{Lemma}
\theoremstyle{definition}
\newcommand{\OO}{\mathcal O}
\newcommand{\Po}{\mathcal P}
\newcommand{\V}{\mathcal V}
\newcommand{\T}{\mathcal T}
\newcommand{\N}{\mathcal N}
\newcommand{\F}{\mathcal{F}}
\newcommand{\sub}{\subseteq}
\def\lowfwd #1#2#3{{\mathop{\kern0pt #1}\limits^{\kern#2pt\raise.#3ex
\vbox to 0pt{\hbox{$\scriptscriptstyle\rightarrow$}\vss}}}}
\def\lowbkwd #1#2#3{{\mathop{\kern0pt #1}\limits^{\kern#2pt\raise.#3ex
\vbox to 0pt{\hbox{$\scriptscriptstyle\leftarrow$}\vss}}}}
\def\fwd #1#2{{\lowfwd{#1}{#2}{15}}}
\def\ve{\kern-1pt\lowfwd e{1.5}2\kern-1pt}
\def\ev{\kern-1pt\lowbkwd e{1.5}2\kern-1pt}
\def\vr{\lowfwd r{1.5}2}
\def\rv{\lowbkwd r02}
\def\vv{\lowfwd v{1.5}2}
\def\vleft{\lowbkwd v02}
\def\vu{\lowfwd u{1.5}2}
\def\uv{\lowbkwd u02}
\def\vw{\lowfwd w{1.5}2}
\def\vrdash{{\mathop{\kern0pt r\lower.5pt\hbox{${}
     \scriptstyle'$}}\limits^{\kern0pt\raise.02ex
     \vbox to 0pt{\hbox{$\scriptscriptstyle\rightarrow$}\vss}}}}
\def\rvdash{{\mathop{\kern0pt r\lower.5pt\hbox{${}
     \scriptstyle'$}}\limits^{\kern0pt\raise.02ex
     \vbox to 0pt{\hbox{$\scriptscriptstyle\leftarrow$}\vss}}}}
\def\vrone{\lowfwd {r_1}12}
\def\rvone{\lowbkwd {r_1}02}
\def\vrtwo{\lowfwd {r_2}12}
\def\rvtwo{\lowbkwd {r_2}02}
\def\vs{\lowfwd s{1.5}1}
\def\sv{\lowbkwd s{1.5}1}
\def\vsidash{{\mathop{\kern0pt s_i\kern-3.5pt\lower.3pt\hbox{${}
     \scriptstyle'$}}\limits^{\kern0pt\raise.02ex
     \vbox to 0pt{\hbox{$\scriptscriptstyle\rightarrow$}\vss}}}}
\def\vS{{\hskip-1pt{\fwd S3}\hskip-1pt}} 
\def\vSk{\lowfwd {S_k}11}
\def\vSr{{\vec S}_{\raise.1ex\vbox to 0pt{\vss\hbox{$\scriptstyle\ge\vr$}}}}
\def\vSdash{{\mathop{\kern0pt S\lower-1pt\hbox{${}
     \scriptstyle'$}}\limits^{\kern2pt\raise.1ex
     \vbox to 0pt{\hbox{$\scriptscriptstyle\rightarrow$}\vss}}}}
\def\vsdash{{\mathop{\kern0pt s\lower.5pt\hbox{${}
     \scriptstyle'$}}\limits^{\kern0pt\raise.02ex
     \vbox to 0pt{\hbox{$\scriptscriptstyle\rightarrow$}\vss}}}}
\def\svdash{{\mathop{\kern0pt s\lower.5pt\hbox{${}
     \scriptstyle'$}}\limits^{\kern0pt\raise.02ex
     \vbox to 0pt{\hbox{$\scriptscriptstyle\leftarrow$}\vss}}}}
\def\vtdash{{\mathop{\kern0pt t\lower0pt\hbox{${}
     \scriptstyle'$}}\limits^{\kern0pt\raise.1ex
     \vbox to 0pt{\hbox{$\scriptscriptstyle\rightarrow$}\vss}}}}
\def\tvdash{{\mathop{\kern0pt t\lower0pt\hbox{${}
     \scriptstyle'$}}\limits^{\kern0pt\raise.1ex
     \vbox to 0pt{\hbox{$\scriptscriptstyle\leftarrow$}\vss}}}}
\def\vddash{{\mathop{\kern0pt d\raise1pt\hbox{${}
     \scriptstyle'$}}\limits^{\kern0pt\raise.02ex
     \vbox to 0pt{\hbox{$\scriptscriptstyle\rightarrow$}\vss}}}}
\def\dvdash{{\mathop{\kern0pt d\raise1pt\hbox{${}
     \scriptstyle'$}}\limits^{\kern0pt\raise.02ex
     \vbox to 0pt{\hbox{$\scriptscriptstyle\leftarrow$}\vss}}}}
\def\vtstar{{\mathop{\kern0pt t\raise2.5pt\hbox{${}
     \scriptstyle*$}}\limits^{\kern0pt\raise.1ex
     \vbox to 0pt{\hbox{$\scriptscriptstyle\rightarrow$}\vss}}}}
\def\tvstar{{\mathop{\kern0pt t\raise2.5pt\hbox{${}
     \scriptstyle*$}}\limits^{\kern0pt\raise.1ex
     \vbox to 0pt{\hbox{$\scriptscriptstyle\leftarrow$}\vss}}}}
\def\vtstarD{{\mathop{\kern0pt t\kern.5pt\raise3pt\hbox{${}
     \scriptstyle*$}{\kern-5.5pt\lower3pt\hbox{$
     \scriptstyle D$}}}\limits^{\kern0pt\raise.1ex
     \vbox to 0pt{\hbox{$\scriptscriptstyle\rightarrow$}\vss}}}}
\def\tvstarD{{\mathop{\kern0pt t\kern.5pt\raise3pt\hbox{${}
     \scriptstyle*$}{\kern-5.5pt\lower3pt\hbox{$
     \scriptstyle D$}}}\limits^{\kern0pt\raise.1ex
     \vbox to 0pt{\hbox{$\scriptscriptstyle\leftarrow$}\vss}}}}
\def\vt{\lowfwd t{1.5}1}
\def\tv{\lowbkwd t{1.5}1}
\def\vT{{\fwd T1}}
\def\vU{{\vec U}} 
\def\vt{\lowfwd t{1.5}1}
\def\tv{\lowbkwd t{1.5}1}
\def\vx{\lowfwd x{1.5}1}
\def\xv{\lowbkwd x{1.5}1}
\def\vy{\lowfwd y{1.5}1}
\def\yv{\lowbkwd y{1.5}1}
\def\?#1{\vadjust{\vbox to 0pt{\vss\vskip-8pt\leftline{%
     \llap{\hbox{\vbox{\pretolerance=-1
     \doublehyphendemerits=0\finalhyphendemerits=0
     \hsize16truemm\tolerance=10000\small
     \lineskip=0pt\lineskiplimit=0pt
     \rightskip=0pt plus16truemm\baselineskip8pt\noindent
     \hskip0pt        
     #1\endgraf}\hskip7truemm}}}\vss}}}
\def\COMMENT#{}
    \title{Structural submodularity and tangles in abstract separation systems}
	\author{Reinhard Diestel, Joshua Erde and Daniel Wei\ss{}auer}     
	\date{}
\begin{document}
	
	     \maketitle
     
	\begin{abstract}
		We prove a tree-of-tangles theorem and a tangle-tree duality theorem for abstract separation systems~$\vS$ that are submodular in the structural sense that, for every pair of oriented separations, $\vS$~contains either their meet or their join defined in some universe~$\vU$ of separations containing~$\vS$.

This holds, and is widely used, if $\vU$ comes with a submodular order function and $\vS$ consists of all its separations up to some fixed order. Our result is that for the proofs of these two theorems, which are central to abstract tangle theory, it suffices to assume the above structural consequence for~$\vS$, and no order function is needed.
\end{abstract}	     
     
     \begin{section}{Introduction}
     	
This paper is, in a sense, the capstone of a comprehensive project~\cite{CDHH13CanonicalAlg, CDHH13CanonicalParts, confing, CG14:isolatingblocks, AbstractSepSys, TreeSets, ProfileDuality, ProfilesNew, TangleTreeAbstract, TangleTreeGraphsMatroids, JoshRefining, JoshUnified} whose aim has been to utilize the idea of tangles familiar from Robertson and Seymour's graph minors project as a way of capturing clusters in other contexts, such as image analysis~\cite{MonaLisa}, genetics~\cite{TanglesEmpirical}, or the social sciences~\cite{TanglesSocial}. The idea is to use tangles, which in graphs are certain consistent ways of orienting their low-order separations, as an indirect way of capturing `fuzzy' clusters~-- ones that cannot easily be described by simply listing their elements~-- by instead orienting all those low-order separations towards them. We can then think of these as a collection of signposts all pointing to that cluster, and of clusters as collective targets of such consistent pointers.

Once clusters have been captured by `abstract tangles' in this way, one can hope to generalize to such clusters Robertson and Seymour's two fundamental results about tangles in graphs~\cite{GMX}. One of these is the {\em tree-of-tangles\/} theorem. It says that any set of distinguishable tangles~-- ones that pairwise do not contain each other~-- can in fact be distinguished pairwise by a small and nested set of separations: for every pair of tangles there is a separation in this small and nested collection that distinguishes them. Formally, this means that these two tangles orient it differently; informally it means that one of its two orientations points to one of the tangles, while its other orientation points to the other tangle. Since these separations are nested, they split the underlying structure in a tree-like way, giving it a rough overall structure.

The other fundamental result from~\cite{GMX}, the {\em tangle-tree duality\/} theorem, tells us that if there are no tangles of a desired type then the entire underlying structure can be split in such a tree-like way, i.e.\ by some nested set of separations, so that the regions corresponding to a node of the structure tree are all small. (What exactly this means may%
   \COMMENT{}
   depend on the type of tangle considered.)

This research programme required a number of steps, of which this paper constitutes the last.

The first step was to make the notion of tangles independent from their natural habitat of graphs. In a graph, tangles are ways of consistently orienting all its separations $\{A,B\}$ up to some given order, either as $(A,B)$ or as~$(B,A)$. If we want to do this for another kind of underlying structure than a graph, this structure will have to come with a notion of `separation', it must be possible to `orient' these separations, and there must be a difference between doing this `consistently' or `inconsistently'. If we wish to express, and perhaps prove, the two fundamental tangle theorems in such an abstract context, we further need a notion of when two `separations' are nested.

There are many structures that come with a natural notion of separation. For sets, for example, we might simply take bipartitions. The notion of nestedness can then be borrowed from the nestedness of sets and applied to the bipartition classes. Thinking of a bipartition as an unordered pair of subsets, we can also naturally orient it `towards one or the other of these subsets' by ordering the pair. Finally, we have to come up with natural notions of when orientations of different separations are consistent: we think of this as `roughly pointing the same way', and it is another prerequisite for defining tangles to make this formal. This is both trickier to do in an abstract context and one of our main sources of freedom; we shall address this question in Section~\ref{sec:Defs}.

The completion of the first step in our research programme thus consisted in abstracting from the various notions of separation, and of consistently orienting separations, a minimum set of requirements that might serve as axioms for an abstract notion of tangle applicable to all of them. This resulted in the concept of {\em separation systems\/} and their (`abstract') tangles~\cite{AbstractSepSys}.

The second step, then, was to generalize the proofs of the tree-of-tangles theorem and the tangle-tree duality theorem to the abstract setting of separation systems. This was done in~\cite{ProfilesNew} and~\cite{TangleTreeAbstract}, respectively.

In order to prove these theorems, or to apply them%
   \COMMENT{}
   to concrete cases of abstract separation systems, e.g.\ as in~\cite{TangleTreeGraphsMatroids, MonaLisa}, one so far still needed a further ingredient of graph tangles: a submodular order function on the separation system considered. Our aim in this paper is to show that one can do without this: we shall prove that a structural consequence of the existence of a submodular order function, a~consequence  that can be expressed in terms of abstract separation systems, can replace the assumption that such a function exists in the proofs of the above two theorems. We shall refer to separation systems that satisfy this structural condition as {\em submodular separation systems\/}.%
   \footnote{There is also a notion of submodularity for separation {\em universes\/}. Separation universes are special separation systems that are particularly large, and they are always submodular as separation systems. For separation universes, therefore, submodularity is used with the narrower meaning of being endowed with a submodular order function~\cite{AbstractSepSys}.}

With this third step, then, the programme sketched above will be complete: we shall have a notion of tangle for very general abstract separation systems, as well as a tree-of-tangles theorem and a tangle-tree duality theorem for these tangles that can be expressed and proved without the need for any submodulary order function on the separation systems considered.

Formally, our two main results read as follows:

     	\begin{theorem} \label{ToTThm}
     		Every submodular separation system~${\vS}$ contains a tree set of separations that distinguishes all the abstract tangles of~$S$.
     	\end{theorem}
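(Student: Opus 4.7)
The plan is to mimic the strategy of the standard tree-of-tangles proof from \cite{ProfilesNew}, which proceeds by greedily building a nested distinguishing set $N \subseteq \vS$ and, whenever a separation needed to distinguish a further pair of tangles crosses something in $N$, replaces it by an ``uncrossed'' corner separation. In that proof the crucial step is to argue that such a corner separation actually lies in $\vS$; classically this is done by comparing orders via the submodularity inequality $|r \vee s| + |r \wedge s| \le |r| + |s|$ applied to the supposed order function. My claim is that under the structural submodularity hypothesis this comparison is unnecessary: by assumption, for every pair $\vr,\vs$ of oriented separations in $\vS$, at least one of $\vr \wedge \vs$ or $\vr \vee \vs$ (computed in the ambient universe $\vU$) already lies in $\vS$, which is exactly what we need.

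More concretely, I would start with a maximal nested subset $N \subseteq \vS$ (or iteratively extend one) and suppose, for contradiction, that there is a pair of distinguishable tangles $\tau_1,\tau_2$ of $\vS$ not distinguished by $N$. Since $\tau_1 \ne \tau_2$, there exists some $s \in \vS$ that distinguishes them, say $\vs \in \tau_1$ and $\sv \in \tau_2$. If $s$ is nested with $N$ we are done; otherwise pick some $r \in N$ that crosses $s$. Because $r \in N$ does not distinguish $\tau_1,\tau_2$, the two tangles agree on $r$, say both contain $\vr$. Now among the four corner separations $\vs \wedge \vr$, $\vs \vee \vr$, $\sv \wedge \vr$, $\sv \vee \vr$, structural submodularity supplies one from each complementary pair $\{\vs \wedge \vr,\ \sv \vee \vr\}$ and $\{\vs \vee \vr,\ \sv \wedge \vr\}$ inside~$\vS$. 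Using the profile property of tangles (tangles are closed, in the appropriate sense, under taking the meet of two oriented separations they already contain), one of these corner separations still distinguishes $\tau_1,\tau_2$ and is nested with $r$. Replacing $s$ by this corner separation gives a new candidate separation distinguishing $\tau_1,\tau_2$ with strictly fewer crossings with $N$.

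To make the whole argument go through one needs a well-founded termination measure. The natural choice, given no order function is available, is the number of crossings between the candidate separation and the fixed nested set $N$: each uncrossing strictly reduces this finite count, so after finitely many steps we obtain a separation distinguishing $\tau_1,\tau_2$ that is nested with all of $N$, contradicting the maximality of $N$. Iterating over all pairs of tangles (or, more precisely, choosing $N$ to be maximal among nested subsets of $\vS$ that distinguish as many tangle pairs as possible) yields the desired tree set.

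The hardest point, I expect, will not be the uncrossing itself but ensuring that after the replacement the new separation still lies in $\vS$ \emph{and} still distinguishes the same pair of tangles in the correct direction. The structural submodularity assumption controls the first clause, and the profile/consistency axioms for abstract tangles control the second, but making the two cooperate for whichever of $\vr \wedge \vs$ or $\vr \vee \vs$ the hypothesis happens to supply requires a careful case analysis on how $\tau_1$ and $\tau_2$ orient the corner separations. Once that bookkeeping is handled cleanly, the classical greedy framework from \cite{ProfilesNew} should carry over without further use of an order function.
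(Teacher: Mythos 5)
Your overall framing is right — the paper indeed proves a stronger statement for profiles and uses an uncrossing argument with a crossing-count as the termination measure, and you correctly identify that structural submodularity stands in for the classical order-comparison step — but there is a genuine gap in the heart of your plan. You propose to fix a nested set $N$ and repeatedly replace the \emph{candidate separation} $s$ by a corner of $s$ and some crossing $r\in N$. For that to work, you would need a corner that (a) lies in~$\vS$, (b) is nested with~$r$, and (c) still distinguishes $\tau_1$ from~$\tau_2$. But structural submodularity only guarantees that \emph{one} of $\vs\vee\vr$, $\vs\wedge\vr$ lies in~$\vS$, and these two corners behave very differently with respect to~(c). With $\vr\in\tau_1\cap\tau_2$ and $\vs\in\tau_1$, $\sv\in\tau_2$, the corner $\vs\vee\vr$ does distinguish $\tau_1$ from $\tau_2$ (using the profile property in $\tau_1$ and consistency in $\tau_2$), but $\vs\wedge\vr$ does \emph{not}: by consistency it lies in both $\tau_1$ and $\tau_2$, since it is $\le\vs\in\tau_1$ and $\le\vr\in\tau_2$. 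So if submodularity happens to hand you only $\vs\wedge\vr\in\vS$, your uncrossing step stalls and the crossing count does not go down.

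The paper gets around this by a \emph{double} minimization over pairs $(\N,s)$, where $\N$ ranges over nested sets distinguishing the other profiles and $s$ ranges over separations distinguishing the remaining pair, and by being willing to modify $\N$ rather than $s$: when $\vs\vee\vt\in\vS$ (for the chosen $t\in\N$ with $\vt$ minimal among crossers of~$s$), it replaces $s$ by $\vs\vee\vt$; when only $\vs\wedge\vt\in\vS$, it instead replaces $t$ inside $\N$ by $\vs\wedge\vt$. The second move is what your proposal is missing. It requires an extra lemma (Lemma~\ref{unique pair distinguished}) ensuring that after swapping $t$ for its corner, the new nested set still distinguishes all the profiles it used to — because there is a unique pair of profiles that only $t$ distinguished, and one checks directly that the corner $\vs\wedge\vt$ still separates that pair. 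Your plan as stated has no analogue of this, and without it the termination measure you propose (crossings of the candidate $s$ with a \emph{fixed} $N$) cannot be made to decrease in the bad case. So the missing idea is: allow the nested set itself to evolve during the uncrossing, and prove the bookkeeping lemma that justifies this.
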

     	
\begin{theorem} \label{TTDThm}
     		Let~${\vS}$ be a submodular separation system without degenerate elements in a distributive universe~${\vU}$. Then exactly one of the following holds:\vskip-\medskipamount\vskip-\medskipamount
     		\begin{enumerate}[\rm (i)]\itemsep=0pt
     			\item $S$ has an abstract tangle.
     			\item There exists an $S$-tree over~$\T^*$ (witnessing that $S$ has no abstract tangle).
     		\end{enumerate}
     	\end{theorem}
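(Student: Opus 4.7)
The plan is to reprove the tangle-tree duality theorem by starting from its predecessor in \cite{TangleTreeAbstract}, which established the same dichotomy under the stronger assumption of a submodular order function on a separation universe. The core task is to locate every use of that order function in the proof and replace it by an argument that uses only structural submodularity and distributivity of~$\vU$.

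First I would dispose of the easy direction: if (ii) holds then (i) fails. Any $S$-tree over~$\T^*$ associates a star of $\T^*$ to each of its nodes, and any abstract tangle of~$S$, being a consistent orientation of all of~$\vS$, would have to contain one such star, contradicting its definition. This is formal and mirrors the corresponding step in \cite{TangleTreeAbstract}.

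For the substantive direction (if no abstract tangle exists then an $S$-tree over~$\T^*$ does) the strategy in \cite{TangleTreeAbstract} is to pick an "extremal" separation $\vs\in\vS$, place it at the root of a tentative $S$-tree, recurse on each of the two sides of~$\vs$ obtained by restricting~$\vS$, and paste the resulting trees together. The whole argument hinges on a single technical device, the \emph{shifting lemma}: when we recurse on one side of~$\vs$, every separation~$\vr\in\vS$ on the other side must be replaced by a shifted copy $\vr\vee\vs$ (or~$\vr\wedge\vs$, by symmetry), and we need this shifted copy to lie in~$\vS$. In \cite{TangleTreeAbstract} this was guaranteed by choosing~$\vs$ of minimum order and invoking submodularity of the order function to bound the order of~$\vr\vee\vs$.

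The main obstacle, and the crux of the proof, is to reprove this shifting lemma structurally. Here the plan is to use the hypothesis directly: for any $\vr,\vs\in\vS$, structural submodularity gives \emph{at least one} of $\vr\vee\vs$ and $\vr\wedge\vs$ in~$\vS$. If the desired join-shift is available we use it; otherwise we shift via the meet on the opposite side of~$\vs$ instead. The point of requiring~$\vU$ to be distributive is to make this exchange work: distributivity provides the algebraic identities needed to show that the "dual" shift plays the same combinatorial role as the original (nestedness with~$\vs$, compatibility with the stars in~$\T^*$, consistency with the recursive partition of the universe). Once the shifting lemma has been recast in this symmetric, structural form, the rest of the proof of \cite{TangleTreeAbstract} — the extremal choice of~$\vs$, the inductive assembly of the $S$-tree, the verification that each node carries a star in~$\T^*$, and the termination on a suitable well-founded invariant of~$\vS$ — transfers with only cosmetic changes.
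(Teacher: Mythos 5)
Your high-level idea — that the only job is to replace each use of the order function in the proof of the duality theorem from~\cite{TangleTreeAbstract} by a structural argument — is the right starting point, but the execution has a genuine gap, and the paper actually takes a cleaner route.

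The crux of your plan is the sentence ``If the desired join-shift is available we use it; otherwise we shift via the meet on the opposite side of~$\vs$ instead.'' This does not work. The shifting machinery of~\cite{TangleTreeAbstract} requires a \emph{uniform} emulating separation: you must exhibit a single $\vs\ge\vr$ such that $\vs\vee\vt\in\vS$ for \emph{every} $\vt\in\vS\setminus\{\rv\}$ with $\vt\ge\vr$ (and dually on the other side), because the shift $\sigma\mapsto\sigma_\vx^\vs$ replaces every member of a star $\sigma$ at once and the result must again be a star in~$\vS$ attached at a fixed tree node. Structural submodularity only guarantees, for each $\vt$ \emph{separately}, that \emph{one} of $\vs\vee\vt$ and $\vs\wedge\vt$ lies in~$\vS$; it does not say that the same choice works for all $\vt$, and you cannot switch to ``shifting via the meet on the opposite side'' on a per-separation basis without destroying the star structure and hence the $S$-tree. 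You also attribute to distributivity a role (rescuing the meet-shift) that it does not and cannot play.

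What the paper does instead is treat the duality theorem of~\cite{TangleTreeAbstract} as a black box: Theorem~4.3 there gives the dichotomy for \emph{any} $\F$-separable separation system, with no order function in sight. So the task reduces to verifying two hypotheses. First, that a structurally submodular~$\vS$ is separable: this is the real content, proved by a non-trivial extremal argument about abstract lattices (choose a ``bad pair'' $(a,b)$ with $\{u\in M: a\le u\le b\}$ minimal and derive a contradiction with submodularity). Your per-separation case split is nowhere near this; you have not produced the global emulating separation that separability demands. Second, that the relevant $\F$ is closed under shifting: here distributivity of~$\vU$ genuinely enters, via the inequality that if $\vu\le\vv$ and $\vv\vee\vw$ is co-small then $\vv\vee(\vw\wedge\uv)$ is co-small. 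Finally, there is a subtlety you do not mention at all: $\T$ is not a family of stars, so Theorem~4.3 of~\cite{TangleTreeAbstract} is applied with $\T^*$, and one must then show (again using distributivity and the absence of degenerate elements, via an uncrossing argument) that $\T^*$-tangles coincide with abstract tangles. Without this last step your conclusion (i) would be about $\T^*$-tangles rather than abstract tangles.
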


\noindent
(See Section~\ref{sec:Defs} for definitions.) Three further theorems, which partly strengthen or generalize the above two, will be stated in Section~\ref{sec:Defs} (and proved later) when we have more terminology available.

One may ask, of course, whether weakening the existence of a submodular order function to `structural submodularity' in the premise of these two theorems is worth the effort. We believe it is. For a start, the entire programme of developing abstract separation systems, and  a theory of tangles for them, served the purpose of identifying the few structural assumptions one has to make of a set of objects called `separations' in order to capture the essence of tangles in graphs, and thereby make them applicable in much wider contexts. It would then seem oblivious of these aims to stop just short of the goal: to continue to make unnecessarily strong assumptions of an extraneous and non-structural kind when weaker structural assumptions can achieve the same.

However, there is also a technical advantange. As we shall see in Sections~\ref{sec:cliques} and~\ref{sec:phylo}, there are interesting abstract separation systems that are structurally submodular but which do not come with a natural submodular order function that implies this.

\end{section}

\begin{section}{Abstract separation systems}\label{sec:Defs}
     	
     	Abstract separation systems were first introduced in~\cite{AbstractSepSys}; see there for a gentle formal introduction and any terminology we forgot to define below. Motivation for why they are interesting can be found in the introductory sections of \cite{ProfilesNew, TangleTreeAbstract, TangleTreeGraphsMatroids} and in~\cite{MonaLisa}. In what follows we provide a self-contained account of just the definitions and basic facts about abstract separation systems that we need in this paper.

     	A \emph{separation system} $({\vS}, \leq,\! {}^*)$ is a partially ordered set with an order-reversing involution $^*\!: {\vS} \to {\vS}$. The elements of~${\vS}$ are called \emph{(oriented) separations}. The \emph{inverse} of ${\vs} \in {\vS}$ is~${\vs}{}^*$, which we usually denote by~${\sv}$.%
   \vadjust{\penalty-200}
   An \emph{(unoriented) separation} is a set $s = \{ {\vs}, {\sv} \}$ consisting of a separation and its inverse and we then refer to~${\vs}$ and~${\sv}$ as the two \emph{orientations} of~$s$. Note that it may occur that ${\vs} = {\sv}$, we then call~${\vs}$ \emph{degenerate}. The set of all separations is denoted by~$S$. When the context is clear, we often refer to oriented separations simply as separations in order to improve the flow of text.

     	If the partial order $({\vS}, \leq)$ is a lattice with join~$\vee$ and meet~$\wedge$, then we call $({\vS}, \leq,\! {}^*, \vee, \wedge)$ a \emph{universe} of (oriented) separations. It is \emph{distributive} if it is distributive as a lattice. Typically, the separation systems we are interested in are contained in a universe of separations. In most applications, one starts with a universe $({\vU}, \leq, {}^*, \vee, \wedge)$ and then defines~${\vS}$ as a set of separations of low order with respect to some {\em order function\/} on~$\vU$, a map $\lvert \, \cdot \, \lvert : {\vU} \to [0,\infty)$ that is \emph{symmetric} in that $| {\vs} | = |{\sv}|$, and  \emph{submodular} in that $|{\vs} \vee {\vt}| + |{\vs} \wedge {\vt}| \leq |{\vs}| + |{\vt}|$ for all ${\vs}, {\vt} \in {\vU}$. Submodularity of the order function in fact plays a crucial role in several arguments. One of its most immediate consequences is that whenever both ${\vs}, {\vt} \in \vSk{}:= \{ {\vu} \in {\vU} \colon |{\vu}| < k \}$, then at least one of ${\vs} \vee {\vt}$ and ${\vs} \wedge {\vt}$ again lies in~${\vSk}$. 
     	
     	In order to avoid recourse to the external concept of an order function if possible, let us turn this last property into a definition that uses only the language of lattices. Let us call a subset~$M$ of a lattice $(L, \vee, \wedge)$ \emph{submodular} if for all $x, y \in M$ at least one of $x \vee y$ and $x \wedge y$ lies in~$M$. A~separation system $\vS$ contained in a given universe~$\vU$ of separations is \emph{(structurally) submodular} if it is submodular as a subset of the lattice underlying~$\vU$.
	     	
     	We say that ${\vs} \in {\vS}$ is \emph{small} (and~${\sv}$ is co-small) if ${\vs} \leq {\sv}$. An element~${\vs}\in\vS$ is \emph{trivial} in~$\vS$ (and~${\sv}$ is co-trivial) if there exists $t\in S$ whose orientations $\vt,\tv$ satisfy ${\vs} < {\vt}$ as well as $\vs < {\tv}$. Notice that trivial separations are small.

     	Two separations $s, t \in S$ are \emph{nested} if there exist orientations ${\vs}$ of~$s$ and ${\vt}$ of~$t$ such that ${\vs} \leq {\vt}$. Two oriented separations are nested if their underlying separations are. We say that two separations \emph{cross} if they are not nested. A set of (oriented) separations is \emph{nested} if any two of its elements are. A nested separation system without trivial or degenerate elements is a \emph{tree set}. A set~$\sigma$ of non-degenerate oriented separations is a \emph{star} if for any two distinct ${\vs}, {\vt} \in \sigma$ we have ${\vs} \leq {\tv}$. A family $\F \sub 2^{{\vU}}$ of sets of separations is \emph{standard for~${\vS}$} if for any trivial ${\vs} \in {\vS}$ we have $\{ {\sv} \} \in \F$. Given $\F \sub 2^{{\vU}}$, we write~$\F^*$ for the set of all elements of~$\F$ that are stars.
     	
     	An \emph{orientation} of~$S$ is a set $O \sub {\vS}$ which contains for every $s \in S$, exactly one of ${\sv}, {\vs}$. An orientation~$O$ of~$S$ is \emph{consistent} if whenever $r, s \in S$ are distinct and ${\vr} \leq {\vs}\in O$, then ${\rv} \notin O$. The idea behind this is that separations~$\rv$ and~$\vs$ are thought of as pointing away from each other if $\vr\le\vs$. If we wish to orient~$r$ and~$s$ towards some common region of the structure which they are assumed to `separate', as is the idea behind tangles, we should therefore not orient them as~$\rv$ and~$\vs$.

Tangles in graphs also satisfy another, more subtle, consistency requirement: they never orient three separations $r,s,t$ so that the region to which they point collectively is `small'.%
   \footnote{Formally: so that the union of their sides to which they do {\em not\/} point is the entire graph.}
    This can be mimicked in abstract separation systems by asking that three oriented separations in an `abstract tangle' must never have a co-small supremum; see~\cite[Section~5]{AbstractSepSys}. So let us implement this formally.

Given a family $\F \sub 2^{{\vU}}$, we say that~$O$ \emph{avoids}~$\F$ if there is no $\sigma \sub O$ with  $\sigma \in \F$. A consistent $\F$-avoiding orientation of~$S$ is called an {\em $\F$-tangle\/} of~$S$. An~$\F$-tangle for $\F = \T$ with
 $$\T:= \{ \{ {\vr}, {\vs}, {\vt} \} \sub {\vU} \colon {\vr} \vee {\vs} \vee {\vt} \text{ is co-small} \}$$
 is an \emph{abstract tangle}.

 A~separation $s \in S$ \emph{distinguishes} two orientations $O_1, O_2$ of $S$ if $O_1 \cap s \neq O_2 \cap s$. Likewise, a set~$N$ of separations \emph{distinguishes} a set~$\OO$ of orientations if for any two $O_1, O_2 \in \OO$, there is some $s \in N$ which distinguishes them.

Let us restate our tree-of-tangles theorem for abstract tangles of submodular separation systems:	
     	
     	\begin{TTThm}
     		Every submodular separation system~${\vS}$ contains a tree set of separations that distinguishes all the abstract tangles of~$S$.
     	\end{TTThm}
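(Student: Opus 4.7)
The plan is to prove the theorem by a greedy-plus-uncrossing argument. First, I would observe that the abstract tangles of $S$ are pairwise distinguished by elements of $\vS$: any two distinct orientations of $S$ must disagree on at least one $s\in S$, and both $\vs,\sv$ then lie in $\vS$. Choosing one such distinguisher for each unordered pair of abstract tangles yields a finite set $N\subseteq\vS$ that distinguishes all abstract tangles of $S$.

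Next, I would replace $N$ by a distinguishing subset of $\vS$ that is optimal in the following double sense: first of minimum size among all distinguishing subsets of $\vS$, and subject to that, minimising the number of crossing pairs it contains. The heart of the argument is then to show that this optimal $N$ must be nested. Suppose for contradiction that $r,s\in N$ cross, and let $\tau_1,\tau_2$ be two abstract tangles that $s$ distinguishes, with $\vs\in\tau_1$ and $\sv\in\tau_2$; label the orientations of $r$ so that $\vr\in\tau_1$ and $\rv\in\tau_2$. Consider the four corner separations of $\{r,s\}$ in the universe $\vU$. Structural submodularity of $\vS$ guarantees that at least two of these corners lie in $\vS$, namely one from each pair $\{\vr\vee\vs,\,\vr\wedge\vs\}$ and $\{\vr\vee\sv,\,\vr\wedge\sv\}$. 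A short case analysis, using consistency of $\tau_1$ and $\tau_2$ together with the fact that meets and joins of tangle-oriented separations are again oriented by the tangle in the expected way, should yield a corner $c\in\vS$ that still distinguishes $\tau_1$ from $\tau_2$. I would then replace $s$ by $c$ in $N$.

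To close the argument I need to verify that this replacement strictly reduces either the size of $N$ or, failing that, the number of crossing pairs. Since $c$ is a corner of $r$ and $s$ it is nested with $r$, so the pair $(r,c)$ contributes $0$ crossings where $(r,s)$ contributed $1$. For any third $t\in N$, a standard corner lemma (proved for abstract separation systems in \cite{ProfilesNew, TangleTreeAbstract}) says that if $t$ is nested with both $r$ and $s$ then $t$ is nested with every corner of $\{r,s\}$; hence $c$ crosses no more of $N\setminus\{r,s\}$ than $s$ did. This yields a strict decrease in crossings, contradicting the choice of $N$. A concluding clean-up pass removes any element of $N$ that has become trivial or degenerate during replacement: degenerate separations lie in no consistent orientation, and trivial ones are forced into the same orientation by every consistent orientation, so they distinguish no pair of tangles and can be discarded while preserving the distinguishing property. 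What remains is a nested, trivial- and degenerate-free subset of $\vS$, that is, a tree set, which distinguishes all abstract tangles of $S$.

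The main obstacle I anticipate is the corner-selection step: the chosen $c$ must simultaneously lie in $\vS$ (structural submodularity guarantees this only for two of the four corners) and distinguish the specific pair $\tau_1,\tau_2$ (which depends on how those tangles orient $r$ and $s$). In the classical proof with a submodular order function one has more room to choose corners of bounded order and argue by descending order; here, under the weaker structural hypothesis, one has to check that the two corners automatically guaranteed in $\vS$ always include a suitable distinguisher of $\tau_1$ and $\tau_2$. This interplay between the weakened submodularity hypothesis and the orientation constraints coming from the abstract tangles is the real crux of the argument.
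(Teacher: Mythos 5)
The overall spirit of your approach---uncrossing a distinguishing set while tracking a suitable potential---is close to the paper's, but the paper embeds it in an induction on the number of profiles precisely to avoid the gap your proposal runs into, and you cannot simply replace that induction by a global minimization.

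The crux is the replacement step. Your $N$ has minimum size, so each $s\in N$ is the \emph{sole} distinguisher in $N$ of at least one pair of abstract tangles. If $\{\tau_1,\tau_2\}$ is such a pair for $s$ and $r$ is another element of $N$ crossing $s$, then by definition $r$ does \emph{not} distinguish $\tau_1$ from $\tau_2$, say $\vr\in\tau_1\cap\tau_2$. Your labelling ``$\vr\in\tau_1$, $\rv\in\tau_2$'' is therefore inadmissible for exactly the pairs that matter. Now check which corners distinguish such a pair: both meets $\vr\wedge\vs$ and $\vr\wedge\sv$ lie below $\vr$, hence by consistency lie in $\tau_1\cap\tau_2$ and distinguish nothing; only the joins $\vr\vee\vs$ and $\vr\vee\sv$ can separate $\tau_1$ from $\tau_2$. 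But structural submodularity guarantees only one element from each of the pairs $\{\vr\vee\vs,\,\vr\wedge\vs\}$ and $\{\vr\vee\sv,\,\vr\wedge\sv\}$, so it is entirely possible that only the two meets lie in $\vS$, in which case \emph{no} corner of $r$ and $s$ in $\vS$ distinguishes $\tau_1$ from $\tau_2$ and your replacement is impossible. A second, independent, problem is that even when a suitable corner $c$ exists for one pair, $s$ may uniquely distinguish several pairs in $N$ (with $N$ not yet nested, the uniqueness furnished by Lemma~\ref{unique pair distinguished} is unavailable), and those pairs may orient $r$ on opposite sides, so no single corner can serve them all.

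The paper's proof evades both difficulties by adding one profile $P$ at a time. With $\Pi_0:=\Pi\setminus\{P\}$ already distinguished by a nested inclusion-minimal $\N$, Lemma~\ref{unique pair distinguished} gives each $t\in\N$ a \emph{unique} responsible pair $(P_1,P_2)$; and the two exchange moves are dovetailed with submodularity so that whichever of $\vs\vee\vt$ and $\vs\wedge\vt$ lies in $\vS$ is the one needed: the join $\vs\vee\vt$ is used to replace $s$ in distinguishing the pair $(P,P')$ that \emph{agrees} on $t$, while the meet $\vs\wedge\vt$ is used to replace $t$ in distinguishing the pair $(P_1,P_2)$ that \emph{disagrees} on $t$. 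Your proposal has no analogue of this pairing between ``which corner survives'' and ``which pair it must distinguish,'' and as shown above the pairing is not optional. To repair your argument you would essentially have to re-import the induction and Lemma~\ref{unique pair distinguished}, which brings you back to the paper's proof.
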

     	
     	We now introduce the structural dual to the existence of abstract tangles. An \emph{$S$-tree} is a pair $(T, \alpha)$ consisting of a tree~$T$ and a map $\alpha : {\vec E}(T) \to {\vS}$ from the set ${\vec E}(T)$ of orientations of edges of~$T$ to~${\vS}$ such that $\alpha(y,x) = \alpha(x,y)^*$ for all $xy \in E(T)$. Given $\F \sub 2^{{\vU}}$, we call $(T, \alpha)$ an \emph{$S$-tree over~$\F$} if $\alpha( F_t) \in \F$ for every $t \in T$, where
     	\[
     	F_t := \{ (s,t) \colon st \in E(T) \} .
     	\]     	
It is easy to see that if~${S}$ has an abstract tangle, then there can be no ${S}$-tree over~$\T$.

Our tangle-tree duality theorem for abstract tangles of submodular separation systems, which we now re-state, asserts a converse to this. Recall that~$\T^*$ denotes the set of stars in~$\T$.

     	     	\begin{TDThm}
     		Let~${\vS}$ be a submodular separation system without degenerate elements in a distributive universe~${\vU}$. Then exactly one of the following holds:
     		\begin{enumerate}[\rm (i)]\itemsep=0pt\vskip-\smallskipamount\vskip0pt
     			\item ${S}$ has an abstract tangle.
     			\item There exists an $S$-tree over~$\T^*$.
     		\end{enumerate}
     	\end{TDThm}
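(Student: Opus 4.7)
The implication that (i) and (ii) cannot both hold is standard. Given an $S$-tree $(T,\alpha)$ over~$\T^*$ and an abstract tangle~$O$ of~$S$, orient every edge $xy\in E(T)$ towards~$y$ whenever $\alpha(x,y)\in O$. By consistency of~$O$ this is a well-defined orientation of the finite tree~$T$, so it has a sink~$t$; at that sink we have $\alpha(s,t)\in O$ for every neighbour~$s$ of~$t$, whence $F_t\sub O$. Since $F_t\in\T^*\sub\T$, this contradicts the fact that the $\T$-avoiding $O$ contains no subset from~$\T$.

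For the converse, the plan is to apply the general tangle-tree duality theorem for abstract separation systems from~\cite{TangleTreeAbstract} with $\F=\T^*$. That theorem asserts that, for any standard set $\F\sub 2^{\vU}$ of stars that is \emph{closed under shifting} inside~$\vS$, either $S$ has an $\F$-tangle or there is an $S$-tree over~$\F$. Two hypotheses have to be verified: that $\T^*$ is standard for~$\vS$, which follows directly from the definitions (a~co-trivial separation is co-small, so a singleton $\{\sv\}$ formed from a trivial $\vs\in\vS$ already lies in~$\T^*$), and that $\T^*$ is closed under shifting inside~$\vS$.

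The shifting lemma is the technical heart of the argument: given a star $\sigma\in\T^*$ containing a separation~$\vr$, and a separation $\vs\le\vr$ in~$\vS$, we must construct a shifted star $\sigma'\in\T^*$ rooted at~$\vs$ and contained in~$\vS$. The natural candidate replaces each $\vt\in\sigma\setminus\{\vr\}$ either by $\vt\vee\vs$ or by $\vt\wedge\rv$. Distributivity of~$\vU$ is used to show that the shifted set is again a star and that its supremum remains co-small, by a direct distributive-lattice calculation that translates the co-smallness of the original join into co-smallness of the new one. Structural submodularity of~$\vS$ is then invoked to ensure that for each~$\vt$ at least one of the two candidate shifts $\vt\vee\vs$ and $\vt\wedge\rv$ belongs to~$\vS$. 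The main obstacle is to carry out this shifting without the help of a submodular order function: where previously one could show $\vt\vee\vs\in\vSk$ by a single numerical inequality, one must now argue purely structurally, choosing between meet and join consistently for each $\vt\in\sigma$ so that the resulting star both satisfies the co-smallness condition and lies inside~$\vS$ as a whole. Once this lemma is in place, Theorem~\ref{TTDThm} follows from the abstract duality theorem of~\cite{TangleTreeAbstract} applied to~$\F=\T^*$.
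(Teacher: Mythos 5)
Your overall strategy---reduce to the abstract duality theorem of \cite{TangleTreeAbstract} with $\F=\T^*$---is indeed the one the paper follows, but your account misstates the hypotheses of that theorem and, in doing so, skips what is actually the central technical contribution of the paper, while also omitting a second essential step at the end.

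First, the duality theorem of \cite{TangleTreeAbstract} does not merely require that $\F$ be standard and closed under shifting: it requires that $\vS$ be \emph{$\F$-separable}. Closure of $\F$ under shifting is only half of this; the other half is that $\vS$ itself be \emph{separable}, i.e.\ that for all non-trivial non-degenerate $\vrone\le\vrtwo$ in $\vS$ there is an $\vs\in\vS$ emulating $\vrone$ with $\sv$ simultaneously emulating $\rvtwo$. In the presence of a submodular order function this is a known lemma; proving it from structural submodularity alone is precisely the hard part, and it is where the paper does genuinely new work (a lattice-theoretic lemma: any submodular subset of a finite lattice is \emph{strongly separable}, proved by a minimal-counterexample argument on bad pairs $(a,b)$). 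Your sketch proposes instead to invoke submodularity \emph{inside} the shifting step, choosing between $\vt\vee\vs$ and $\vt\wedge\sv$ separately for each $\vt\in\sigma$. That does not work: the shift $\sigma_{\vx}^{\vs}$ is a fixed operation (replace $\vx$ by $\vx\vee\vs$ and every other $\vy$ by $\vy\wedge\sv$), and what guarantees membership in $\vS$ is that $\vs$ \emph{emulates} $\vr$, not a per-element meet-or-join choice. (Also, as a minor point, you have the inequality backwards: in the emulation set-up $\vs\ge\vr$, not $\vs\le\vr$.) A per-element choice has no reason to produce a coherent star inside $\vS$, and the paper explicitly notes that the classical order-function argument comparing corner orders to the orders of $s$ and $t$ directly ``is naturally difficult, if not impossible, to mimic'' structurally; that is why they route through strong separability of submodular lattices instead.

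Second, even once the duality theorem is applied with $\F=\T^*$, its alternative (i) only yields a \emph{$\T^*$-tangle}. To conclude that $S$ has an \emph{abstract tangle} (a $\T$-tangle) you still need to show that every $\T^*$-tangle already avoids all of $\T$, not just its stars. This is a separate lemma in the paper, proved by an uncrossing argument that uses submodularity of $\vS$, distributivity of $\vU$, and the absence of degenerate elements; without it the proof of (i) is incomplete. So the missing ingredients in your proposal are (a) the separability of $\vS$ via the strong-separability lemma for submodular lattices, and (b) the equivalence of $\T^*$-tangles with abstract tangles.
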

     	
	Here, it really is necessary to exclude degenerate separations: a single degenerate separation will make the existence of abstract tangles impossible, although there might still be $\T^*$-tangles (and therefore no $S$-trees over~$\T^*$). We will actually prove a duality theorem for $\T^*$-tangles without this additional assumption and then observe that $\T^*$-tangles are in fact already abstract tangles, unless~${\vS}$ contains a degenerate separation. 
     	
	In applications, we do not always wish to consider all the abstract tangles of a given separation system. For example, if~$S$ consists of the bipartitions $\{A,B\}$ of some finite set~$V$ (see~\cite{AbstractSepSys} for definitions), then every $v \in V$ induces an abstract tangle
     	\[
     	\tau_v := \big\{ (A, B) \in {\vS} \colon v \in B \big\} ,
     	\]
	the \emph{principal tangle} induced by~$v$.%
   \vadjust{\penalty-200}
   In particular, abstract tangles trivially exist in these situations. In order to exclude principal tangles, we could require that every tangle~$\tau$ of~$S$ must satisfy $(\{ v \}, V \setminus \{ v \}) \in \tau$ for every $v \in V\!$.
	
	More generally, we might want to prescribe for some separations~$s$ of~$S$ that any tangle of~$S$ we consider must contain a particular one of the two orientations of~$s$ rather than the other. This can easily be done in our abstract setting, as follows. Given $Q \sub {\vU}$, let us say that an abstract tangle~$\tau$ of~$S$ \emph{extends}~$Q$ if $Q \cap {\vS} \sub \tau$. It is easy to see that~$\tau$ extends~$Q$ if and only if~$\tau$ is $\F_Q$-avoiding, where 
	\[
	\F_Q := \{ \{ {\sv} \} \colon {\vs} \in Q \text{ non-degenerate}\}.
	\]
	 We call $Q \sub\vU$ {\em down-closed\/} if $\vr\le\vs\in Q$ implies $\vr\in Q$ for all $\vr,\vs\in\vU$.
	
	Here, then, is our refined tangle-tree duality theorem for abstract tangles of submodular separation systems.
     	
	     	     	\begin{theorem} \label{TTDThm2}
     		Let~${\vS}$ be a submodular separation system without degenerate elements in a distributive universe~${\vU}$ and let $Q \sub {\vU}$ be down-closed. Then exactly one of the following assertions holds:
     		\begin{enumerate}[\rm (i)]\itemsep=0pt\vskip-\smallskipamount\vskip0pt
     			\item $S$ has an abstract tangle extending~$Q$.
     			\item There exists an ${S}$-tree over $\T^* \cup \F_Q$.
     		\end{enumerate}
     	\end{theorem}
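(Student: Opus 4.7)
The plan is to deduce Theorem~\ref{TTDThm2} by folding the down-closed set~$Q$ into the family of forbidden stars and re-running the proof of Theorem~\ref{TTDThm} for this enlarged family. The starting observation is that an abstract tangle~$\tau$ of~$S$ extends~$Q$ if and only if $\tau\cap\F_Q=\emptyset$, so alternative~(i) is equivalent to the existence of a consistent orientation of~$S$ that avoids $\T\cup\F_Q$.

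The incompatibility of~(i) and~(ii) is the standard sink argument. Given an abstract tangle~$\tau$ extending~$Q$ and an $S$-tree $(T,\alpha)$ over $\T^*\cup\F_Q$, orient each edge $xy\in E(T)$ towards the endpoint~$x$ with $\alpha(y,x)\in\tau$. The finite tree~$T$ then has a sink~$t$, giving $\alpha(F_t)\sub\tau$. This contradicts either $\tau\cap\T=\emptyset$ (if $\alpha(F_t)\in\T^*$) or $\tau\cap\F_Q=\emptyset$ (if $\alpha(F_t)\in\F_Q$).

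For the forward direction, suppose~(i) fails. I~would follow the same route the paper announces for Theorem~\ref{TTDThm} and first establish the $\T^*$-analogue without the non-degeneracy hypothesis, namely: \emph{if $S$ admits no $(\T^*\cup\F_Q)$-tangle, then there is an $S$-tree over $\T^*\cup\F_Q$}. This is an application of the general duality framework from~\cite{TangleTreeAbstract} to the family $\F:=\T^*\cup\F_Q$, whose standing hypotheses~-- that $\F$ be standard for~$\vS$ and closed under the shifting operation inside the distributive universe~$\vU$~-- need to be verified for~$\F$. For the $\T^*$-half these are precisely what the proof of Theorem~\ref{TTDThm} will establish using structural submodularity. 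For the $\F_Q$-half, each element is a singleton (hence automatically a star), and the down-closedness of~$Q$ is designed exactly to ensure that a shift of a singleton $\{\sv\}\in\F_Q$ remains in~$\F_Q$. Once the $(\T^*\cup\F_Q)$-duality is in place, the absence of degenerate separations in~$\vS$ upgrades any $(\T^*\cup\F_Q)$-tangle to an abstract tangle extending~$Q$ by the same observation the paper uses to pass from the $\T^*$-variant of Theorem~\ref{TTDThm} to Theorem~\ref{TTDThm} itself, closing the dichotomy. The one real technical checkpoint is the closure of~$\F_Q$ under shifting; the hypothesis that~$Q$ be \emph{down-closed} was tailored to provide it, after which the rest of the proof is a routine rerun of the proof of Theorem~\ref{TTDThm} with $\F_Q$ carried alongside~$\T^*$ throughout.
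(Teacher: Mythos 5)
Your proposal is correct and mirrors the paper's own proof: the paper likewise applies its Theorem~\ref{submodular duality} to the family $\T_Q := \T^*\cup\F_Q$, verifying via Lemma~\ref{closed under shifting} that $\T_Q$ is closed under shifting (the $\F_Q$-half using exactly the down-closedness of~$Q$ as you anticipate), and then invokes Lemma~\ref{tangle stars} to upgrade $\T^*$-tangles to abstract tangles in the absence of degenerate elements. The only inessential difference is that you re-derive the incompatibility of (i) and (ii) by a sink argument, which the paper obtains for free since Theorem~\ref{submodular duality} already asserts an exclusive disjunction.
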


\noindent
   Observe that Theorem~\ref{TTDThm2} implies Theorem~\ref{TTDThm} by taking $Q = \emptyset$. 

\medbreak	

The abstract tangles in Theorem~\ref{TTDThm2} are not the only $\F$-tangles for which such a  statement holds. In~\cite{TangleTreeGraphsMatroids} the tangle-tree duality theorem of~\cite{TangleTreeAbstract} is used to prove such a statement for a broad class of $\F$-tangles, albeit under a stronger assumption: one needs there that~${\vS}$ is not just structurally submodular, as is our assumption here throughout our paper, but that $\vU$ has a submodular order function and $\vS$ is the set separations up to some fixed order (and therefore, in particular, submodular).

In Section~\ref{s:duality}, however, we will show that the weaker assumption that~${\vS}$ itself is submodular is in fact sufficient to establish the only property of~$S$%
   \COMMENT{}
   whose proof in~\cite{TangleTreeGraphsMatroids} requires a submodular order function: this is the fact that $S$ is `separable'. (We shall repeat the definition of this in Section~\ref{s:duality}.)

The other ingredient one needs for all those applications of the tangle-tree duality theorem from~\cite{TangleTreeAbstract} is a property of~$\F$: that $\F$ is `closed under shifting'. Sometimes, a~submodular order function on~$\vU$ is needed also to establish this property of~$\F$. But if it is not, we can now prove the same application without a submodular order function, assuming only that $S$ itself is submodular:

	\begin{theorem}\label{submodular duality}
	Let ${\vU}$ be a universe of separations and ${\vS} \sub {\vU}$ a submodular separation system. Let $\F \sub 2^{{\vU}}$ be a set of stars which is standard for~${\vS}$ and closed under shifting. Then exactly one of the following holds:
	     		\begin{enumerate}[\rm (i)]\itemsep=0pt\vskip-\smallskipamount\vskip0pt
	     			\item There exists an $\F$-tangle of~$S$.
	     			\item There exists an $S$-tree over~$\F$.
	     		\end{enumerate}
	     	\end{theorem}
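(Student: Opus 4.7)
The plan is to reduce Theorem~\ref{submodular duality} to the general tangle-tree duality theorem of~\cite{TangleTreeAbstract}. That theorem yields the same dichotomy under two structural hypotheses: (a)~$\F$ is a set of stars standard for~$\vS$ and closed under shifting, and (b)~the separation system $\vS$ is \emph{separable}, meaning, roughly, that for every pair $\vr\le\vs$ in $\vS$ satisfying a mild non-triviality condition, some separation $\vt\in\vS$ with $\vr\le\vt\le\vs$ exists that emulates each of $\vr$ and $\vs$ under shifting. Hypothesis~(a) is given, so the real content of the theorem is to derive~(b) from the structural submodularity of~$\vS$.

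The ``exactly one'' half of the dichotomy, namely that (i) and (ii) are mutually exclusive, follows by a standard argument. Given an $S$-tree $(T,\alpha)$ over~$\F$ and an $\F$-tangle~$\tau$, orient each edge $xy$ of~$T$ from $y$ to~$x$ whenever $\alpha(y,x)\in\tau$. Since the sets $\alpha(F_t)$ are stars and $\tau$ is consistent, every edge acquires a unique orientation and no two oriented edges at a common node point away from each other; hence $T$ has a sink~$t$, and then $\alpha(F_t)\subseteq\tau$, contradicting the fact that $\tau$ avoids~$\F$.

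The heart of the proof is therefore to show that structural submodularity of $\vS$ implies separability of $\vS$. First I would recall the precise definition of separability from~\cite{TangleTreeAbstract} in Section~\ref{s:duality}, as promised by the paper. The strategy to prove separability is to adapt the argument of~\cite{TangleTreeGraphsMatroids} to our purely structural setting: given $\vr\le\vs$ in $\vS$ (subject to the required non-triviality conditions), start with the tentative witness $\vr$ itself and, for each $\vu\in\vS$ that obstructs emulation, replace the current candidate $\vt$ by a corner $\vt\vee(\vu\wedge\vs)$, or else by its meet counterpart $\vt\wedge(\vu\vee\vs)$, whichever of the two structural submodularity of $\vS$ places in $\vS$. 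Iterating yields the desired intermediate separation.

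The main obstacle is this termination/coherence issue. In~\cite{TangleTreeGraphsMatroids} the submodular order function supplied a strictly decreasing integer-valued potential, so the iteration manifestly terminated; here we only know that \emph{some} corner lies in~$\vS$, with no canonical monotone quantity to measure progress. The hard part will therefore be to show that the two admissible corner moves never conflict in a way that prevents the construction from stabilising, and that a single separation $\vt\in\vS$ can be extracted that simultaneously witnesses the emulation requirement for every $\vu$. A natural framework for this is a Zorn-style argument on the poset of separations in $\vS$ lying between $\vr$ and $\vs$. Once separability is established, the tangle-tree duality theorem of~\cite{TangleTreeAbstract} applies verbatim: in the absence of an $\F$-tangle it produces an $S$-tree over~$\F$, yielding conclusion~(ii).
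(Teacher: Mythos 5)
Your high-level plan matches the paper's: reduce to the tangle--tree duality theorem of \cite{TangleTreeAbstract}, note that the hypothesis on $\F$ is given, and observe that the real content is to derive separability of $\vS$ from structural submodularity. You also correctly flag the obstruction: without a submodular order function there is no obvious potential that makes a corner-replacement iteration terminate. But that is precisely where your proposal stops. What you offer in place of a proof is a description of the problem (``the hard part will therefore be to show that the two admissible corner moves never conflict'') together with a vague gesture towards ``a Zorn-style argument on the poset of separations in $\vS$ lying between $\vr$ and $\vs$.'' That is not an argument; it is a statement of intent, and it is far from clear that your proposed iterative corner-replacement scheme can be made to converge at all, let alone produce a single witness $\vt$ that works simultaneously for every $\vu$.

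The paper resolves this with a genuinely different idea. It abstracts to an arbitrary finite lattice $L$ with a submodular subset $M$, defines two transitive relations on $M$ --- $x$ \emph{pushes} $y$ if $x\le y$ and $x\wedge z\in M$ for every $z\in M$ with $z\le y$, and $x$ \emph{lifts} $y$ dually --- and proves that $M$ is ``strongly separable'' by taking a minimal bad pair $(a,b)$, minimal with respect to the size of the interval $I(a,b)=\{u\in M: a\le u\le b\}$. Transitivity of pushing/lifting then forces $a$ to push every element of $I(a,b)\setminus\{b\}$ and $b$ to lift every element of $I(a,b)\setminus\{a\}$, which, combined with witnesses $x,y$ of the failure of $a$ to push $b$, produces a pair $x,y$ with both $x\vee y\notin M$ and $x\wedge y\notin M$, contradicting submodularity. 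There is no iteration and no Zorn: the argument is a finite extremal one, and its engine is the transitivity of pushing and lifting, which your proposal never identifies. You should supply something of this kind --- or the specific lattice lemma itself --- before the reduction to \cite{TangleTreeAbstract} can be called a proof.
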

 \noindent
We shall prove Theorem~\ref{submodular duality} in Section~\ref{s:duality}.

\medbreak

Our last result is an example of Theorem~\ref{submodular duality} for a concrete~$\F$, a tangle-tree duality theorem for $\F$-tangles of bipartitions of a set that are used particularly often in applications~\cite{TanglesEmpirical,TanglesSocial}. Let ${\vU}$ be the universe of oriented bipartitions $(A,B)$ of a set~$V\!$ (see~\cite{AbstractSepSys} for definitions). Let $m\ge 1$ and $n\ge 2$%
   \COMMENT{}
    be integers, and define
$$\F_m := \big\{\,F\sub\vU : \big|\!\bigcap_{(A,B)\in F}\!\!\! B\>\big| < m\, \big\}$$
\vskip-.5\baselineskip\noindent
and
 $$\F^n_m := \{\,F\in\F_m : |F| < n\,\}.$$
 To subsume $\F_m$ under this latter notation we allow $n=\infty$, so that $\F^\infty_m = \F_m$.
 Given any collection $\F\sub 2^\vU$ of sets of oriented separations, we write~$\F^*$ for its subcollection of those sets $F\in\F$ that are stars (of oriented separations).

We shall prove in Section~\ref{s:duality} that the set of stars in $\F_m$ is closed under shifting. Building on Theorem~\ref{submodular duality}, we then use this in Section~\ref{apps} to prove the following:

     	\begin{theorem}\label{Fn}
     	Let ${\vS} \sub {\vU}$ be a submodular separation system, let $1\le m\in{\mathbb N}$ and $2\le n\in{\mathbb N}\cup\{\infty\}$, and let $\F = \F^n_m$.%
   \COMMENT{}
   Then exactly one of the following two statements holds:
     	\begin{enumerate}[\rm (i)]\itemsep=0pt\vskip-\smallskipamount\vskip0pt
     	\item $S$ has an $\F$-tangle;
     	\item There exists an $S$-tree over $\F^*$.
     	\end{enumerate}
      	\end{theorem}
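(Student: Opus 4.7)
The plan is to derive Theorem~\ref{Fn} from Theorem~\ref{submodular duality} by applying it to the family $\F^* = (\F_m^n)^*$ of stars in $\F_m^n$. First I would verify the hypotheses of Theorem~\ref{submodular duality} for this family. That $\F^*$ is standard for $\vS$ is essentially immediate: a trivial $\vs \in \vS$ in the bipartition universe must have $A_s = \emptyset$ (both orientations of some $t$ lie strictly above $\vs$, forcing $A_s \sub A_t \cap B_t$), so $\sv = (V,\emptyset)$ and the singleton $\{\sv\}$ is a star in $\F_m^n$ (using $m\ge 1$ and $n\ge 2$). Closure of $\F^*$ under shifting follows from the analogous closure of $(\F_m)^*$, which I would have proved in Section~\ref{s:duality}, together with the observation that shifting does not alter the cardinality of a star.

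With these hypotheses in hand, Theorem~\ref{submodular duality} gives either an $\F^*$-tangle of $S$ or an $S$-tree over~$\F^*$. The latter is exactly conclusion~(ii). For the former, I still need a bridge: an $\F^*$-tangle is a consistent orientation avoiding every star in $\F_m^n$, whereas~(i) asks for an orientation avoiding every member of $\F_m^n$, including non-stars. The crux of the argument is to show that in this bipartition setting the two notions coincide, i.e.\ that whenever a consistent orientation $O$ contains some $F \in \F_m^n$ as a subset, it also contains a star of $\F_m^n$ as a subset.

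For this bridge I would take a counterexample $F \sub O$ with $F \in \F_m^n$ that is minimal in the lexicographic order $(|F|, \sum_{(A,B) \in F} |A|)$; since singletons are stars, $|F| \ge 2$, and $F$ is not a star. If two elements of $F$ were comparable, say $\vs \le \vt$, then $B_t \sub B_s$ makes $B_s$ redundant in the intersection, so $F\setminus\{\vs\}$ would again lie in $\F_m^n \cap 2^O$, contradicting minimality. Otherwise I pick $\vs, \vt \in F$ crossing in the sense $A_s \cap A_t \ne \emptyset$. By structural submodularity of $\vS$, one of $\vs \wedge \tv$, $\vs \vee \tv$ lies in $\vS$; in the first case consistency of $O$ forces $\vs \wedge \tv \in O$ (strictly below $\vs$), and in the second case it forces $\sv \wedge \vt \in O$ (strictly below $\vt$). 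A routine calculation exploiting $A_u \cap B_u = \emptyset$ shows that replacing the appropriate element of $F$ by this shifted version preserves the $B$-intersection verbatim, while strictly decreasing $\sum|A|$~-- contradicting minimality.

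I expect this bridge to be the main obstacle, both conceptually (identifying the correct replacement pattern dictated by submodularity and consistency) and, if $V$ is infinite, technically: the lexicographic measure must then be replaced by a transfinite variant, or one argues via a maximality argument for a minimal counterexample. Everything else is routine once the shifting result of Section~\ref{s:duality} is granted; the mutual exclusion of (i) and (ii) is standard, since an $\F$-tangle avoids $\F \supseteq \F^*$ whereas an $S$-tree over $\F^*$ forces every consistent orientation to contain a star of $\F^*$.
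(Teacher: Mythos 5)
Your proposal is correct and follows essentially the same route as the paper: apply Theorem~\ref{submodular duality} to $\F^* = (\F_m^n)^*$ after verifying standardness (via the fact that $(\emptyset,V)$ is the only small, hence the only trivial, separation in $\vU$) and closure under shifting, and then bridge $\F^*$-tangles to $\F$-tangles by an uncrossing argument that is exactly the paper's Lemma~\ref{Funcrosses}, replacing one of two crossing $\vs,\vt\in F$ by the corner supplied by submodularity while noting that the $B$-intersection is unchanged. The only cosmetic variation is your termination measure $\sum_{(A,B)\in F}|A|$; the paper instead bounds the number of uncrossing steps by $\binom{|F|}{2}$, which works equally well in the finite setting the paper tacitly assumes.
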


\medskip
   The bound $n$ on the size of the sets in~$\F$ is often taken to be~4.%
   \COMMENT{}
   In~(i) we could replace $\F$ with~$\F^*$, since for these~$\F$ the $\F$-tangles are precisely the~$\F^*$-tangles; see Section~\ref{sec:cluster}.

\end{section}

     \begin{section}{The tree-of-tangles theorem}\label{sec:tree}
     	In this section we will prove Theorem~\ref{ToTThm}.	In fact, we are going to prove a slightly more general statement. Let $\Po := \{ \{ {\vs}, {\vt}, ({\vs} \vee {\vt})^* \} \colon {\vs}, {\vt} \in {\vU} \}$. The $\Po$-tangles are known as \emph{profiles}. A profile of $S$ is {\em regular\/} if it contains all the small separations in~$\vS$.

     	\begin{theorem} \label{tangle tree reg prof}
     		Let~${\vS}$ be a submodular separation system and~$\Pi$ a set of profiles of~$S$. Then~$\vS$ contains a tree set that distinguishes~$\Pi$.
     	\end{theorem}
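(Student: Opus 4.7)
The plan is to build a nested distinguishing subset $N\subseteq\vS$ by iteratively adding a separation that distinguishes a previously undistinguished pair of profiles, using the structural submodularity of~$\vS$ together with the profile axiom to keep $N$ nested. Once such an $N$ is built, one verifies that its trivial and degenerate elements can be discarded without losing distinguishing power, yielding a tree set.

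For each pair of distinct profiles $P\neq P'\in\Pi$, write $D(P,P') := \{\vr\in\vS : \vr\in P,\ \rv\in P'\}$; since $P\neq P'$ as orientations of~$S$, the set $D(P,P')$ is nonempty. The heart of the argument is the following lemma, which I would prove first: if $N\subseteq S$ is nested and $P\neq P'\in\Pi$ are not distinguished by~$N$, then some $\vr\in D(P,P')$ has underlying separation nested with every element of~$N$. Given this lemma, a greedy construction processing the pairs one at a time (using Zorn's lemma for infinite~$\Pi$) yields a nested distinguishing set.

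To prove the lemma, start with any $\vs\in D(P,P')$. If $s$ is nested with all of~$N$, we are done. Otherwise, some $t\in N$ crosses~$s$. Since $N$ does not distinguish $\{P,P'\}$, both profiles orient $t$ the same way, so we may fix $\vt\in P\cap P'$. By structural submodularity applied to $\vs,\vt\in\vS$, at least one of $\vs\vee\vt$ and $\vs\wedge\vt$ lies in~$\vS$; applied to $\vs,\tv\in\vS$, at least one of $\vs\vee\tv$ and $\vs\wedge\tv$ lies in~$\vS$. Using the consistency of~$P'$ (from $\sv\in P'$ together with $\vs\le\vs\vee\vt$) and the profile axiom applied to $\vs,\vt\in P$, one checks that whenever $\vs\vee\vt\in\vS$ it lies in $D(P,P')$; symmetrically, using the profile axiom at~$P'$ on $\sv,\vt\in P'$ together with consistency at~$P$, one checks that whenever $\vs\wedge\tv\in\vS$ it lies in $D(P,P')$. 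Either candidate is nested with~$t$ (since $\vt\le\vs\vee\vt$ and $\vs\wedge\tv\le\tv$), and by the standard fish-type lemma for nested separations, it is nested with every element of~$N$ that was nested with both $s$ and~$t$. Iterating thus strictly reduces the number of crossings with~$N$ until a distinguisher nested with all of~$N$ is obtained.

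The main obstacle is the pathological possibility that structural submodularity delivers only $\vs\wedge\vt$ from the first application and only $\vs\vee\tv$ from the second, neither of which distinguishes $P$ from~$P'$: a direct calculation shows $\vs\wedge\vt\in P\cap P'$ and $(\vs\vee\tv)^*\in P\cap P'$. To exclude this configuration, I would refine the choice of starting distinguisher: take $\vs\in D(P,P')$ minimal with respect to~$\le$ in $\vS$, so that in the pathological case $\vs\wedge\vt$, though not itself in $D(P,P')$, can be exploited as a strictly smaller element of~$\vS$ to which the same shifting analysis applies, eventually producing a strictly smaller distinguisher of~$P$ and~$P'$ and contradicting the minimal choice of~$\vs$. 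Making this extremal argument mesh with the simultaneous bookkeeping of crossings against the rest of~$N$ is the technical core of the proof. The final passage from a nested distinguishing set to a tree set is routine: a trivial separation $\vr$ with $\vr<\vt$ and $\vr<\tv$ is forced into every consistent orientation of~$S$ and therefore cannot distinguish two profiles, while degenerate separations are fixed by the involution and also distinguish nothing, so discarding both yields a tree set that still distinguishes~$\Pi$.
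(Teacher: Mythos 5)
You have correctly identified the central obstacle: when only $\vs\wedge\vt$ and $\vs\vee\tv$ lie in~$\vS$, neither distinguishes $P$ from~$P'$, and your crossing-reduction argument stalls. But the fix you sketch does not work. Taking $\vs$ minimal in $D(P,P')$ gives you nothing to iterate on, because $\vs\wedge\vt$ simply is \emph{not} a distinguisher of $P$ and~$P'$; there is no ``shifting analysis'' to apply to it, and nothing about the minimality of $\vs$ forces a smaller element of $D(P,P')$ to appear. You acknowledge that meshing this with the crossing bookkeeping is ``the technical core of the proof'' and you leave it open --- that is exactly where the gap lies. Moreover, the lemma you propose is stronger than what the paper actually proves: you claim that for \emph{every} nested $N$ failing to distinguish $P,P'$ there is a distinguisher nested with all of~$N$, whereas the paper only establishes this for a suitably extremal~$\N$, and it achieves extremality precisely by being willing to change~$\N$.

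The paper's resolution of the pathological case is a genuinely different move. It runs an induction on $|\Pi|$, removing a single profile $P$ to get $\Pi_0$, and then minimizes $d(\N,s)$ jointly over \emph{pairs} $(\N,s)$ where $\N$ is any nested set distinguishing~$\Pi_0$ and $s$ any distinguisher of $P,P'$. When $\vs\vee\vt\notin\vS$ one uses $\vrtwo:=\vs\wedge\vt\in\vS$ not to replace~$s$ but to replace~$t$ \emph{inside}~$\N$: one sets $\N' := (\N\setminus\{t\})\cup\{r_2\}$, shows $\N'$ is nested (via the fish lemma and the choice of $\vt$ minimal among the oriented crossers in~$\N$), and shows $\N'$ still distinguishes~$\Pi_0$. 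That last step is not free: it needs Lemma~\ref{unique pair distinguished}, which guarantees a unique pair of profiles in~$\Pi_0$ distinguished only by~$t$, so that it suffices to check $r_2$ distinguishes that one pair. Since $t$ crossed $s$ but $r_2$ does not, $d(\N',s) < d(\N,s)$, contradicting extremality. This ``swap an element of $\N$'' idea, together with the uniqueness lemma that makes the swap safe, is the ingredient your proposal is missing.
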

     	
	This implies Theorem~\ref{ToTThm}, by the following easy observation.     	
     	
     	\begin{lemma}
     		Every abstract tangle is a profile.
     	\end{lemma}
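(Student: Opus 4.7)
The plan is to establish the set inclusion $\Po\subseteq\T$, which immediately yields the lemma: any orientation avoiding the larger family $\T$ avoids also any subfamily, so every abstract tangle (by definition a consistent $\T$\nobreakdash-avoiding orientation of~$S$) is automatically $\Po$-avoiding, hence a profile.

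To verify $\Po\subseteq\T$, take an arbitrary triple $\{\vs,\vt,(\vs\vee\vt)^*\}\in\Po$ with $\vs,\vt\in\vU$ and set $\vr := (\vs\vee\vt)^*$. Writing $\vx := \vs\vee\vt$, the join of the three separations in the triple equals
\[
\vs\vee\vt\vee\vr \;=\; \vx\vee\vx^*.
\]
The lemma reduces to the observation that any element of the form $\vx\vee\vx^*$ is co-small. Since $^*$ is an order-reversing involution on the universe~$\vU$ and therefore exchanges $\vee$ with $\wedge$, we have $(\vx\vee\vx^*)^* = \vx^*\wedge\vx\leq\vx\leq\vx\vee\vx^*$, as required by the definition of `co-small'.

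I expect no real obstacle: the lemma is essentially a direct unfolding of definitions. The only ingredient beyond definitions is the De Morgan-type behaviour of the involution on a universe, which is immediate from its being an order-reversing involution on a lattice. In particular the argument uses neither consistency of the orientation nor any structural submodularity of~$\vS$; it is a purely combinatorial comparison of the two forbidden families.
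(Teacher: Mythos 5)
Your argument is correct and is essentially the same as the paper's: both proofs show $\Po\subseteq\T$ by observing that the join of the three members of a $\Po$-triple has the form $\vx\vee\vx^*$, which is co-small; you merely spell out the De Morgan step that the paper leaves implicit.
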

     		
     	\begin{proof}
     		Let ${\vs}, {\vt} \in \vU$ and ${\vr} := {\vs} \vee {\vt}$. Then
     		\[
     		{\vs} \vee {\vt} \vee {\rv} = {\vr} \vee {\rv}
     		\]
     		is co-small, so $\{ {\vs}, {\vt}, {\rv} \} \in \T$. Therefore $\Po \sub \T$ and every $\T$-tangle is also a $\Po$-tangle.
     	\end{proof}

     	We first recall a basic fact about nestedness of separations. 	For $s, t\in S$, we define the \emph{corners} ${\vs} \wedge {\vt}$, ${\vs} \wedge {\tv}$, ${\sv} \wedge {\vt}$ and ${\sv} \wedge {\tv}$. 
     	
	\begin{lemma}[\cite{AbstractSepSys}] \label{fish lemma}
			Let~${\vS}$ be a separation system in a universe~${\vU}$ of separations. Let $s, t$ be two crossing separations and~${\vr}$ one of the corners. Then every separation that is nested with both~$s$ and~$t$ is nested with~$r$ as well.
	\end{lemma}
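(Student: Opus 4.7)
The plan is a direct case analysis on how $u$ relates by order to $s$ and to $t$, using only the lattice structure of the universe $\vU$ and the De Morgan identity $(\vs\wedge\vt)^* = \sv\vee\tv$ (which holds because the involution is an order-reversing bijection of the lattice). Without loss of generality I assume $\vr = \vs\wedge\vt$; the three other corners are handled identically after relabelling the orientations. So $\rv=\sv\vee\tv$, and I need to find an orientation of $u$ that is comparable to $\vr$ or to $\rv$.

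Nestedness of $u$ with $s$ means that one of the four inequalities $\vu\le\vs$, $\vu\le\sv$, $\sv\le\vu$, $\vs\le\vu$ holds (these are the four pairs of the eight possible comparisons between orientations, grouped by the involution). Similarly for $u$ and $t$. This gives $4\times 4=16$ combinations. In each case I check the following two easy observations and use them to conclude:
\begin{enumerate}[\rm(a)]\itemsep=0pt
\item If some orientation of $u$ lies below both $\vs$ and $\vt$, then it lies below $\vs\wedge\vt=\vr$. Dually, if some orientation lies above both $\sv$ and $\tv$, then it lies above $\sv\vee\tv=\rv$.
\item If some orientation of $u$ lies below $\sv$ or below $\tv$, then it lies below $\sv\vee\tv=\rv$; symmetrically if some orientation lies above $\vs$ or above $\vt$, then it lies above $\vs\wedge\vt=\vr$.
\end{enumerate}
In most of the sixteen cases (a) or (b) immediately delivers an orientation of $u$ comparable to $\vr$ or $\rv$, showing that $u$ is nested with $r$.

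The main obstacle, and the only place where the hypothesis that $s$ and $t$ cross is used, is to rule out the residual ``conflicting'' cases: those where the nestedness with $s$ and the nestedness with $t$ combine into a chain that compares $\vs$ (or $\sv$) directly with $\vt$ (or $\tv$). For instance, if $\vu\le\vs$ while $\tv\le\vu$, then $\tv\le\vs$, so $s$ and $t$ would already be nested, contradicting the assumption that they cross. The same mechanism disposes of every case in which (a) and (b) fail to apply: from the two chains $\vu_1\le\vs'$ and $\vu_2\le\vt'$ (with $\vu_i\in\{\vu,\uv\}$) one extracts, after inverting one of them via the involution if needed, a comparison between an orientation of $s$ and an orientation of $t$, contradicting the hypothesis that $s$ and $t$ cross. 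Hence every remaining combination yields that $u$ is nested with $r$, completing the proof.
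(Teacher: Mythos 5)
Your proof is correct: the reduction to one corner via relabelling, the $4\times 4$ case split by how the orientations of $u$ compare to those of $s$ and $t$, the two lattice observations (a) and (b) that dispatch fourteen of the sixteen cases, and the use of the crossing hypothesis to rule out the two ``conflicting'' cases (where a chain through an orientation of $u$ would force $s$ and $t$ to be nested) are all sound. Note that the paper itself does not prove this lemma but imports it from the reference \cite{AbstractSepSys}; your argument is essentially the standard one given there.
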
     	
	
     	
     	In the proof of Theorem~\ref{tangle tree reg prof}, we take a nested set~$\N$ of separations that distinguishes some set~$\Pi_0$ of regular profiles and we want to exchange one element of~$\N$ by some other separation while maintaining that~$\Pi_0$ is still distinguished. The following lemma simplifies this exchange.
     	
     	\begin{lemma} \label{unique pair distinguished}
     		Let~${\vS}$ be a separation system, $\OO$ a set of consistent orientations of~${S}$ and $\N \sub S$ an inclusion-minimal nested set of separations that distinguishes~$\OO$. Then for every $t \in \N$ there is a unique pair of orientations $O_1, O_2 \in \OO$ that are distinguished by~$t$ and by no other element of~$\N$.
     		\end{lemma}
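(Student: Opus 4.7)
The plan is to split the proof into an existence part and a uniqueness part.

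For existence, I would rely directly on the inclusion-minimality of $\N$. Removing $t$ from $\N$ gives a proper subset, which by minimality fails to distinguish~$\OO$. Hence there are orientations $O_1,O_2 \in \OO$ such that no $s \in \N \setminus \{t\}$ distinguishes them; since $\N$ itself does distinguish $\OO$, the separation $t$ must be the one that separates them. This already produces a pair distinguished by $t$ and by no other element of $\N$.

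For uniqueness, I would argue by contradiction. Suppose $\{O_1,O_2\}$ and $\{O_3,O_4\}$ are two distinct such pairs, labelled so that $\vt \in O_1, O_3$ and $\tv \in O_2, O_4$. Since the pairs are distinct, we may assume $O_1 \ne O_3$, so some $s \in \N$ distinguishes them; necessarily $s \ne t$, since $O_1$ and $O_3$ agree on $t$. Say $\vs \in O_1$ and $\sv \in O_3$. Because $O_1$ and $O_2$ are only distinguished by $t$, they agree on $s$, so $\vs \in O_2$; symmetrically $\sv \in O_4$. So all four sign-patterns $\{\pm\vs,\pm\vt\}$ occur among $O_1,\dots,O_4$.

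The core step is then a short case analysis: since $\N$ is nested and $s \ne t$, one of the four relations $\vs \le \vt$, $\vs \le \tv$, $\sv \le \vt$, $\sv \le \tv$ must hold. In each case, the consistency axiom applied to one specific $O_i$ yields a contradiction; for example, $\vs \le \vt$ forces $\vt \in O \Rightarrow \vs \in O$ for any consistent $O$, which $O_3 = \{\vt,\sv,\dots\}$ violates, and the other three cases are entirely analogous using $O_4$, $O_1$, $O_2$ respectively. The main (and really the only) obstacle is keeping the bookkeeping of the four orientations and the four possible nestedness relations straight, but once the sign-pattern table above is in place the consistency check is mechanical and each case eliminates exactly one of the $O_i$. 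This contradicts the assumption that two distinct such pairs exist, finishing the proof.
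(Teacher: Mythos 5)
Your proof is correct and follows essentially the same route as the paper: inclusion-minimality gives existence, and for uniqueness both arguments establish that all four $\{\pm\vs,\pm\vt\}$ sign-patterns occur among the four orientations and then derive a contradiction from consistency together with the nestedness of $s$ and~$t$. The paper compresses your four-way case analysis into the single observation that some pair of orientations of $r$ and~$t$ point away from each other and hence some $O_i$ is inconsistent, but this is the same idea spelled out less explicitly.
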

     	
     	\begin{proof}
     	It is clear that at least one such pair must exist, for otherwise $\N \setminus \{ t \}$ would still distinguish~$\OO$, thus violating the minimality of~$\N$. 
     	
     	 Suppose there was another such pair, say $O_1', O_2'$. After relabeling, we may assume that ${\vt} \in O_1 \cap O_1'$ and ${\tv} \in O_2 \cap O_2'$. By symmetry, we may further assume that $O_1 \neq O_1'$. Since~$\N$ distinguishes~$\OO$, there is some $r \in \N$ with ${\vr} \in O_1$, ${\rv} \in O_1'$.
     	
     	As~$t$ is the only element of~$\N$ distinguishing $O_1, O_2$, it must be that ${\vr} \in O_2$ as well, and similarly ${\rv} \in O_2'$. We hence see that for any orientation~$\tau$ of $\{ r,t \}$, there is an $O \in \{ O_1, O_2, O_1', O_2' \}$ with $\tau \sub O$. Since~$\N$ is nested, there exist orientations of~$r$ and~$t$ pointing away from each other. But then one of $O_1, O_2, O_1', O_2'$ is inconsistent, which is a contradiction.
     	\end{proof}

     	\begin{proof}[Proof of Theorem~\ref{tangle tree reg prof}]
Note that it suffices to show that there is a nested set~$\N$ of separations that distinguishes~$\Pi$: Every consistent orientation contains every trivial and every degenerate element, so any inclusion-minimal such set~$\N$ gives rise to a tree-set.

We prove this by induction on $|\Pi|$, the case $|\Pi| = 1$ being trivial.
	
	For the induction step, let $P \in \Pi$ be arbitrary and $\Pi_0 := \Pi \setminus \{ P \}$. By the induction hypothesis, there exists a nested set~$\N$ of separations that distinguishes~$\Pi_0$. If some such set~$\N$ distinguishes~$\Pi$, there is nothing left to show. Otherwise, for every nested $\N \sub S$ which distinguishes~$\Pi_0$ there is a $P' \in \Pi_0$ which~$\N$ does not distinguish from~$P$. Note that~$P'$ is unique. For any $s \in S$ that distinguishes~$P$ and~$P'$, let $d(\N, s)$ be the number of elements of~$\N$ which are not nested with~$s$.
	
	Choose a pair $(\N, s)$ so that $d(\N, s)$ is minimum. Clearly, we may assume~$\N$ to be inclusion-minimal with the property of distinguishing~$\Pi_0$. If $d(\N, s) = 0$, then $\N \cup \{ s \}$ is a nested set distinguishing~$\Pi$ and we are done, so we now assume for a contradiction that $d( \N, s) > 0$. 
	
	Since~$\N$ does not distinguish $P$ and $P'$, we can fix an orientation of each $t \in \N$ such that ${\vt} \in P \cap P'$. Choose a $t \in \N$ such that~$t$ and~$s$ cross and~${\vt}$ is minimal. Let $(P_1, P_2)$ be the unique pair of profiles in~$\Pi_0$ which are distinguished by~$t$ and by no other element of~$\N$, say ${\tv} \in P_1$, ${\vt} \in P_2$. Let us assume without loss of generality that~${\sv} \in P_1$. The situation is depicted in Figure~\ref{pic: tangle tree thm}. Note that we do not know whether ${\vs} \in P_2$ or ${\sv} \in P_2$. Also, the roles of~$P$ and~$P'$ might be reversed, but this is insignificant.
	
	\begin{figure}[ht!]
	
	\begin{center}
	\resizebox{4cm}{!}{
\begin{tikzpicture}
\draw[thick] (3.8,0)--(3.8,3.8)--(0,3.8);
\draw[->,thick] (3.8,3.8)--(3.2,3.2);
\node[scale=2] at (2.8,2.9) {$\overleftarrow{r_2}$};
\draw[thick] (4.2,8)--(4.2,4.2)--(8,4.2);
\draw[->,thick] (4.2,4.2)--(4.8,4.8);
\node[scale=2] at (5.1,5.1) {$\overleftarrow{r_1}$};
\draw[red,thick] (4,0)--(4,8);
\draw[blue,thick] (0,4)--(8,4);
\draw[red,->,thick] (4,7.5)--(5,7.5);
\node[text=red,scale=2] at (4.7,8) {$\overrightarrow{s}$};
\draw[blue,->,thick] (7.5,4)--(7.5,5);
\node[text=blue,scale=2] at (8.3,4.5) {$\overrightarrow{t}$};

\node[scale=3] at (2,6) {$P$};
\node[scale=3] at (6,6) {$P'$};
\node[scale=3] at (2,2) {$P_1$};
\node[scale=3,lightgray] at (1,7) {$P_2$};
\node[scale=3,lightgray] at (7,7) {$P_2$};
\end{tikzpicture}
}

\end{center}
    \vskip-12pt
	\caption{Crossing separations}
	\label{pic: tangle tree thm}
\end{figure}
	
	Suppose first that ${\vrone} := {\vs} \vee {\vt} \in {\vS}$. Let $Q \in \{ P, P' \}$. If ${\vs} \in Q$, then ${\vrone} \in Q$, since ${\vt} \in P \cap P'$ and~$Q$ is a profile. If ${\vrone} \in Q$, then ${\vs} \in Q$ since~$Q$ is consistent and ${\vs} \leq {\vrone} \in Q$: it cannot be that ${\vs} = {\rvone}$, since then~$s$ and~$t$ would be nested. Hence each $Q \in \{ P, P'  \}$ contains~${\vrone}$ if and only if it contains~${\vs}$. In particular, $r_1$ distinguishes~$P$ and~$P'$. By Lemma~\ref{fish lemma}, every $u \in \N$ that is nested with~$s$ is also nested with~$r_1$. Moreover, $t$ is nested with~$r_1$, but not with~$s$, so that $d(\N, r_1) < d(\N, s)$. This contradicts our choice of~$s$.
	
	Therefore ${\vs} \vee {\vt} \notin {\vS}$. Since~${\vS}$ is submodular, it follows that ${\vrtwo} := {\vs} \wedge {\vt} \in {\vS}$. Moreover, $r_2$ is nested with every $u \in \N \setminus \{ t \}$. This is clear if ${\vt} \leq {\vu}$ or ${\vt} \leq {\uv}$, since ${\vrtwo} \leq {\vt}$. It cannot be that ${\uv} \leq {\vt}$, because ${\vu}, {\vt} \in P$ and~$P$ is consistent. Since~$\N$ is nested, only the case ${\vu} < {\vt}$ remains. Then, by our choice of~${\vt}$, $u$ and~$s$ are nested and it follows from Lemma~\ref{fish lemma} that~$u$ and~$r_2$ are also nested. Hence $\N' := (\N \setminus \{ t \}) \cup \{ r_2 \}$ is a nested set of separations. 
	
	To see that~$\N'$ distinguishes~$\Pi_0$, it suffices to check that~$r_2$ distinguishes~$P_1$ and~$P_2$. We have ${\vrtwo} \in P_2$ since~$P_2$ is consistent and ${\vrtwo} \leq {\vt} \in P_2$: if ${\vrtwo} = {\tv}$, then~$s$ and~$t$ would be nested. Since ${\rvtwo} = {\sv} \vee {\tv}$ and ${\sv}, {\tv} \in P_1$, we find ${\rvtwo} \in P_1$. Any element of~$\N'$ which is not nested with~$s$ lies in~$\N$. Since $t \in \N \setminus \N'$ is not nested with~$s$, it follows that $d( \N', s)  < d( \N, s)$, contrary to our choice of~$\N$ and~$s$.
     	\end{proof}
     	
     	\end{section}

		 	\begin{section}{Tangle-tree duality}\label{s:duality}

	     	Our agenda for this section is first to prove Theorem~\ref{submodular duality}, and then to derive from it Theorem~\ref{TTDThm2}, which as we have seen implies Theorem~\ref{TTDThm}. Our proof will be an application of the basic tangle-tree duality theorem from~\cite{TangleTreeAbstract}.

For this we need to introduce the notion of separability, and then prove that submodular separation systems are separable (Lemma~\ref{submod sep sys}). This lemma not only lies at the heart of our proof of Theorem~\ref{submodular duality}: it will also be central to any other result that asserts a tangle-tree type duality for separation systems $({\vS}, \leq,\! {}^*)$ that are structurally submodular, but are not so simply as a corollary of the existence of a submodular order function on~$\vS$.
	     	
	     	A separation ${\vs} \in {\vS}$ \emph{emulates~${\vr}$ in~${\vS}$} if ${\vs} \geq {\vr}$ and for every ${\vt} \in {\vS} \setminus \{ {\rv} \}$ with ${\vt} \geq {\vr}$ we have ${\vs} \vee {\vt} \in {\vS}$. For ${\vs} \in {\vS}$, $\sigma \sub {\vS}$ and ${\vx} \in \sigma$, define     	
     		\[
     		\sigma\!_{{\vx}}^{{\vs}} :=	\{ {\vx} \vee {\vs} \} \cup \{ {\vy} \wedge {\sv} \colon {\vy} \in \sigma \setminus \{ {\vx} \} \} .
     		\]
     		
     		\begin{lemma} \label{shift is star}
     		Suppose ${\vs} \in {\vS}$ emulates a non-trivial ${\vr}$ in~${\vS}$, and let $\sigma \sub {\vS}$ be a star such that ${\vr} \leq {\vx} \in \sigma$. Then $\sigma\!^{{\vs}}_{{\vx}} \sub {\vS}$ is a star.
     		 \end{lemma}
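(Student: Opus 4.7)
The plan is to verify two things separately: first, that each element listed in $\sigma\!^{\vs}_{\vx}$ actually lies in~$\vS$, and second, that the resulting set satisfies the star inequality $\va \leq \bv$ for all distinct $\va,\vb$ in it. The containment in~$\vS$ is where the emulation hypothesis does all the work; the star property is then a straightforward lattice calculation using $\sigma$ being a star.

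For membership in~$\vS$: the element $\vx\vee\vs$ is obtained by applying the definition of emulation with $\vt:=\vx$, which is legal provided $\vx\neq\rv$. Similarly, for $\vy\in\sigma\setminus\{\vx\}$, the star property gives $\vy\leq\xv$, so $\yv\geq\vx\geq\vr$, and applying emulation with $\vt:=\yv$ yields $\vs\vee\yv\in\vS$; taking inverses gives $\vy\wedge\sv\in\vS$, provided $\yv\neq\rv$, i.e.\ $\vy\neq\vr$. The only real obstacle in this part is ruling out these two exceptional equalities, and this is precisely where non-triviality of~$\vr$ is used: if $\vx=\rv$, then $\vr\leq\rv$, and I would combine this with the star structure to locate a witness to the triviality of~$\vr$. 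Likewise, if some $\vy=\vr$, then $\vy\neq\vx$ together with $\vr\leq\vx$ and $\vr=\vy\leq\xv$ (from the star property) yields $\vr<\vx$ and $\vr<\xv$, directly exhibiting~$\vr$ as trivial and contradicting the hypothesis. This exclusion-of-exceptional-cases step is the main conceptual obstacle; the rest is mechanical.

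For the star property, I would check the three kinds of pairs. The pair $\vx\vee\vs$ and $\vy\wedge\sv$ needs $\vx\vee\vs\leq\yv\vee\vs$, which is immediate from $\vx\leq\yv$. The reversed check needs $\vy\wedge\sv\leq\xv\wedge\sv$, which follows from $\vy\leq\xv$ together with $\sv\leq\sv$. For two distinct $\vy,\vz\in\sigma\setminus\{\vx\}$, we need $\vy\wedge\sv\leq\zv\vee\vs$, and this follows from the chain $\vy\wedge\sv\leq\vy\leq\zv\leq\zv\vee\vs$, using only that $\sigma$ was a star.

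Putting this together, the proof essentially consists of two short applications of emulation (one on $\vx$, one on $\yv$) followed by routine monotonicity; the only delicate point is verifying $\vx\neq\rv$ and $\vy\neq\vr$, and in each case non-triviality of~$\vr$ suffices. I would write the proof in that order: first handle the exceptional-equality exclusions via non-triviality, then derive membership in~$\vS$ via emulation, then finish with the three star-inequality verifications.
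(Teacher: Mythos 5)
The main gap: a star is by definition a set of \emph{non-degenerate} oriented separations, so to conclude that $\sigma^{\vs}_{\vx}$ is a star you must also show that none of its elements is degenerate, and this is missing from your plan. This is in fact where the paper uses the non-triviality of~$\vr$, and not where you conjecture. The paper observes that every $\vu\in\sigma^{\vs}_{\vx}$ has the form $\vu = \vt\vee\vs$ or $\uv=\vt\vee\vs$ with $\vt\in\vS$ non-degenerate and $\vr\le\vt$ (namely $\vt=\vx$ or $\vt=\yv$). If $\vt\vee\vs$ were degenerate then $\vt\ne\vt\vee\vs$ (as $\vt$ is non-degenerate), so $\vt<\vt\vee\vs$; since $\vt\vee\vs$ equals its own inverse, this exhibits $\vt$ as trivial, and hence $\vr$ as trivial because $\vr\le\vt$~--- contradiction.

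Your proposed use of non-triviality to exclude $\vx=\rv$ and $\vy=\vr$ does not close cleanly. For $\vy=\vr$ you argue $\vr<\vx$ and $\vr<\xv$; the second strictness fails exactly when $\vx=\rv$, so you need to have excluded $\vx=\rv$ first. But your argument for $\vx=\rv$ is only a sketch, and non-triviality alone cannot rule it out: if $\vr$ is small and non-trivial and $\sigma=\{\rv\}$, then $\vx=\rv$ with all hypotheses satisfied. (The paper's proof is equally terse on this point; in every application of the lemma one additionally has $\sigma\sub\vS\setminus\{\rv\}$, which kills $\vx=\rv$ directly and then makes your $\vy=\vr$ argument go through.) Your explicit verification of the star inequalities is correct and is more detailed than the paper's, which simply calls that part clear.
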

     		
     		\begin{proof}	
     		
     	Note that for every ${\vy} \in \sigma \setminus \{ {\vx} \}$ we have ${\vr} \leq {\yv}$. It is clear that for any two distinct~$\vu, \vv \in \sigma\!^{{\vs}}_{{\vx}}$ we have $\vu \leq \vleft$, so we only need to show that every element of~$\sigma\!^{{\vs}}_{{\vx}}$ is non-degenerate and lies in~${\vS}$. For every ${\vu} \in \sigma\!^{{\vs}}_{{\vx}}$ there is a non-degenerate ${\vt} \in {\vS}$ with ${\vr} \leq {\vt}$ such that either ${\vu} = {\vt} \vee {\vs}$ or ${\uv} = {\vt} \vee {\vs}$.
     		
     	Let ${\vt} \in {\vS}$ be non-degenerate with ${\vr} \leq {\vt}$. Since~${\vs}$ emulates~${\vr}$ in~${\vS}$, we find ${\vt} \vee {\vs} \in {\vS}$. Assume for a contradiction that ${\vt} \vee {\vs}$ was degenerate. Since~${\vt}$ is non-degenerate, we find that ${\vt}  < {\vt} \vee {\vs}$, so that~${\vt}$ is trivial. But then so is~${\vr}$, because ${\vr} \leq {\vt}$. This contradicts our assumption on~${\vr}$.
     		\end{proof}
     		
     		The separation system~${\vS}$ is \emph{separable} if for all non-trivial and non-degenerate ${\vrone}, {\rvtwo} \in {\vS}$ with ${\vrone} \leq {\vrtwo}$ there exists an ${\vs} \in {\vS}$ which emulates~${\vrone}$ in~${\vS}$ while simultaneously~${\sv}$ emulates~${\rvtwo}$ in~${\vS}$.
     		
     		Given some $\F \sub 2^{{\vU}}$, we say that~${\vs}$\emph{ emulates~${\vr}$ in~${\vS}$ for~$\F$} if~${\vs}$ emulates~${\vr}$ in~${\vS}$ and for every star $\sigma \sub {\vS} \setminus \{ {\rv} \}$ with $\sigma \in \F$ and every ${\vx} \in \sigma$ with ${\vx} \geq {\vr}$ we have $\sigma\!_{{\vx}}^{{\vs}} \in \F$.
     		
     		The separation system~${\vS}$ is \emph{$\F$-separable} if for all non-trivial and non-degener\-ate ${\vrone}, {\rvtwo} \in {\vS}$ with ${\vrone} \leq {\vrtwo}$ and $\{ {\rvone} \}, \{ {\vrtwo} \} \notin \F$ there exists an ${\vs} \in {\vS}$ which emulates~${\vrone}$ in~${\vS}$ for~$\F$ while simultaneously~${\sv}$ emulates~${\rvtwo}$ in~${\vS}$ for~$\F$.
	     	
	    	\begin{theorem}[{{\cite[Theorem~4.3]{TangleTreeAbstract}}}] \label{duality diestel oum}
	     		Let ${\vU}$ be a universe of separations and ${\vS} \sub {\vU}$ a separation system. Let $\F \sub 2^{{\vU}}$ be a set of stars, standard for~${\vS}$. If~${\vS}$ is $\F$-separable, then exactly one of the following holds:
	     		\begin{enumerate}[\rm (i)]\itemsep=0pt\vskip-\smallskipamount\vskip0pt
	     			\item There exists an $\F$-tangle of~$S$.
	     			\item There exists an $S$-tree over~$\F$.
	     		\end{enumerate}
	     	\end{theorem}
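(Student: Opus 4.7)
The plan is to prove the two directions separately: first the easy direction that (i) and (ii) are mutually exclusive, and then the existence direction by induction powered by $\F$-separability.

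For exclusivity, suppose we had both an $\F$-tangle $O$ and an $S$-tree $(T,\alpha)$ over~$\F$. I would orient each edge $xy$ of $T$ from $x$ to $y$ precisely when $\alpha(x,y)\in O$; the relation $\alpha(y,x)=\alpha(x,y)^*$ together with $O$ containing exactly one orientation of each separation makes this orientation of $T$ well-defined. Since $T$ is finite, this orientation has some sink $t\in V(T)$, and then $\alpha(F_t)\subseteq O$. But $\alpha(F_t)\in\F$ by the definition of an $S$-tree over~$\F$, contradicting that $O$ avoids~$\F$.

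For the existence direction, assume there is no $\F$-tangle of~$S$; I would construct an $S$-tree over~$\F$ by induction on~$|S|$ (the infinite case then follows by a standard compactness / inverse-limit argument). The base case $S=\emptyset$ is handled by the one-node tree, with standardness absorbing any forced trivial or degenerate elements. For the step, pick any non-trivial, non-degenerate $\vr\in\vS$ with $\{\rv\},\{\vr\}\notin\F$ (standardness together with a direct check of forced orientations handles the cases where no such $\vr$ exists). Apply $\F$-separability to the pair $\vr\leq\vr$ to obtain $\vs\in\vS$ such that $\vs$ emulates $\vr$ for~$\F$ and $\sv$ emulates $\rv$ for~$\F$.

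The key structural step is then to split $\vS$ along~$s$ into two strictly smaller sub-systems, roughly $\vS_1:=\vS\cap\vU_{\leq\sv}$ and $\vS_2:=\vS\cap\vU_{\leq\vs}$, and apply induction to each. Where induction produces an $S$-tree on a side, the two resulting $S$-trees can be joined across a new edge labelled by~$s$ to yield the required $S$-tree over~$\F$ of all of~$\vS$. Where induction instead produces a local $\F$-tangle on one side, the shifting operation $\sigma\mapsto\sigma^{\vs}_{\vx}$ from Lemma~\ref{shift is star}, together with the emulation properties of~$\vs$ and~$\sv$, would allow us to lift that partial tangle to an $\F$-tangle of all of~$\vS$, contradicting our global assumption.

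The main obstacle is this lifting-and-gluing step. Concretely, one has to verify: (a) that $\vS_1$ and $\vS_2$ inherit $\F$-separability via chained emulations, so the induction hypothesis genuinely applies to them; (b) that every star of~$\F$ meeting the "wrong" side of~$s$ can be shifted across $s$ to a star still in~$\F$ and still inside~$\vS$, so that no obstruction to tangle-extension survives; and (c) that the two inductively produced $S$-trees glue at the new edge without creating an $F_t$ outside~$\F$ at either of the two join vertices. This delicate combinatorial bookkeeping, precisely what the separability axiom is designed to support, is the technical core of the Diestel--Oum argument and the one step that cannot be offloaded to a previously established lemma.
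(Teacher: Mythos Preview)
This theorem is not proved in the present paper at all: it is quoted verbatim from~\cite{TangleTreeAbstract} and used as a black box, so there is no ``paper's own proof'' to compare against here. Your exclusivity argument (orient the edges of~$T$ by~$O$ and find a sink) is the standard one and is correct.

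For the existence direction, however, your inductive scheme has a genuine gap. The sets $\vS_1:=\vS\cap\vU_{\le\sv}$ and $\vS_2:=\vS\cap\vU_{\le\vs}$ are not separation systems: they are not closed under the involution, so the notions of ``orientation of~$S_i$'', ``$\F$-tangle of~$S_i$'' and ``$S_i$-tree'' are undefined, and the induction hypothesis cannot be invoked for them. Even if you repaired this by some symmetrisation, there is no reason these smaller systems should again be $\F$-separable (your point~(a) is the problem, not a routine check), nor that they are strictly smaller than~$\vS$. The actual Diestel--Oum argument in~\cite{TangleTreeAbstract} does not induct on~$|S|$. Instead it works with \emph{rooted} $S$-trees over~$\F$ (trees with a distinguished leaf whose incident edge carries a prescribed~$\vr$), and proves a gluing lemma: if $\vr_1\le\vr_2$ and there are $S$-trees over~$\F$ rooted at~$\vr_1$ and at~$\rv_2$, then $\F$-separability lets one shift both trees to a common~$\vs$ and amalgamate them into a genuine $S$-tree over~$\F$. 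Assuming no such $S$-tree exists, one then shows directly that the set of~$\vr$ admitting no rooted $S$-tree is a consistent $\F$-avoiding orientation of~$S$. So the shifting is applied to $S$-trees, not to tangles, and no passage to a smaller separation system is needed.
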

	     	
	  In applications of Theorem~\ref{duality diestel oum} it is often easier to split the proof of the main premise, that~${\vS}$ is $\F$-separable, into two parts: a~proof that~${\vS}$ is separable and one that $\F$ is \emph{closed under shifting} in~${\vS}$: that whenever ${\vs} \in {\vS}$ emulates (in~${\vS}$) some nontrivial and nondegenerate ${\vr} \leq {\vs}$ not forced by $\F$, then it does so for~$\F$. Indeed, the following is immediate from the definitions:
	     		
     		\begin{lemma}\label{sep and closed}
     			Let~${\vU}$ be a universe of separations, ${\vS} \sub {\vU}$ a separation system, and $\F \sub 2^{{\vU}}$ a set of stars. If~${\vS}$ is separable and~$\F$ is closed under shifting, then~${\vS}$ is $\F$-separable. \qed
     		\end{lemma}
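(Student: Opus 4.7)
The plan is to unpack the definitions and observe that this is essentially a two-line verification: separability yields the witness ${\vs}$, and the shifting-closure of $\F$ upgrades the two emulation properties of ${\vs}$ and~${\sv}$ to emulation \emph{for~$\F$}.

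More concretely, I would start with arbitrary non-trivial, non-degenerate ${\vrone}, {\vrtwo} \in {\vS}$ with ${\vrone} \leq {\vrtwo}$ and $\{ {\rvone} \}, \{ {\vrtwo} \} \notin \F$; the goal is to produce an ${\vs} \in {\vS}$ that emulates~${\vrone}$ in~${\vS}$ for~$\F$ and whose inverse~${\sv}$ emulates~${\vrtwo}$ in~${\vS}$ for~$\F$. Since~${\vS}$ is separable, applied to the pair $({\vrone}, {\vrtwo})$, we obtain some ${\vs} \in {\vS}$ such that ${\vs}$ emulates~${\vrone}$ in~${\vS}$ and ${\sv}$ emulates~${\vrtwo}$ in~${\vS}$ (without reference to~$\F$). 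In particular ${\vs} \geq {\vrone}$, and dually~${\sv} \geq {\vrtwo}$, so ${\vrone}$ and~${\vrtwo}$ are non-trivial and non-degenerate separations dominated by ${\vs}$ and~${\sv}$ respectively.

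Next I would invoke the hypothesis that~$\F$ is closed under shifting in~${\vS}$. By definition of this property, whenever ${\vs}\in\vS$ emulates a non-trivial, non-degenerate ${\vr}\le{\vs}$ in~${\vS}$ with $\{{\rv}\}\notin\F$, it automatically emulates~${\vr}$ in~${\vS}$ for~$\F$. Applying this once to ${\vs}$ and~${\vrone}$ (legal because $\{{\rvone}\}\notin\F$) and once to~${\sv}$ and~${\vrtwo}$ (legal because $\{{\vrtwo}\}\notin\F$) upgrades the two emulation statements given by separability to the $\F$-versions required for $\F$-separability.

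I do not expect any genuine obstacle: the argument is a direct bookkeeping of definitions, which is exactly why the authors close the statement with~\qed. The one minor thing to be careful about is to check that the side-conditions of the shifting-closure hypothesis (the separations being non-trivial, non-degenerate, and not forced by~$\F$) are precisely the ones already built into the definition of $\F$-separability, so that the two definitions dovetail without any extra assumption.
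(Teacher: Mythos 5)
Your proposal is correct and matches the paper exactly: the paper offers no proof beyond \qed, because the statement is indeed an immediate unwinding of the definitions as you describe, with separability supplying the witness ${\vs}$ and shifting-closure upgrading both emulation statements to emulation for~$\F$. (One small notational slip: the paper's definitions have $\sv$ emulating $\rvtwo$ rather than $\vrtwo$, so where you write ``${\sv}$ emulates~${\vrtwo}$'' and ``${\sv}\ge{\vrtwo}$'' it should read $\rvtwo$; the side-condition $\{\vrtwo\}\notin\F$ that you cite is then precisely ``$\rvtwo$ not forced by~$\F$,'' so the dovetailing you identify does indeed go through.)
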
	
	     	
It is shown in~\cite{TangleTreeGraphsMatroids} that if ${\vU}$ is a universe of separations with an order function, then the sets ${\vSk}$ of all separations of order less than some fixed positive integer~$k$ are separable for all~$k$, and virtually all the applications of Theorem~\ref{duality diestel oum} that are given in~\cite{TangleTreeAbstract} involve a separation system of the form $\vSk$.
	      
	       While many applications of the submodularity of an order function use only its structural consequence that motivated our abstract notion of submodularity, the use of submodularity in the proof that~$\vSk$ is separable~-- see~\cite[Lemma~3.4]{TangleTreeGraphsMatroids}~-- uses it in a more subtle way. There, the orders of opposite corners of two crossing separations $\vs$ and~$\vt$ are compared not with any fixed value of~$k$ but with the (possibly distinct) orders of~$s$ and~$t$ directly. This kind of argument is naturally difficult, if not impossible, to mimic in our set-up.%
   \COMMENT{}

	However, we can prove this nevertheless, choosing a different route. The following lemma is, in essence, the main result of this section: 
	     
	     \begin{lemma}\label{submod sep sys}
	     Let ${\vU}$ be a universe of separations and ${\vS} \sub {\vU}$ a submodular separation system. Then ${\vS}$ is separable.
	     \end{lemma}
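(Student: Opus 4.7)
The plan is, given non-trivial and non-degenerate $\vrone, \vrtwo \in \vS$ with $\vrone \leq \vrtwo$, to produce a single $\vs \in \vS$ with $\vrone \leq \vs \leq \vrtwo$ that emulates $\vrone$ in~$\vS$ and whose inverse $\sv$ emulates $\rvtwo$ in~$\vS$. All the action will take place inside the non-empty interval $X := \{\vs \in \vS : \vrone \leq \vs \leq \vrtwo\}$, which contains both $\vrone$ and $\vrtwo$.

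The engine of the proof is the following immediate consequence of structural submodularity, which I shall invoke repeatedly: if $\vs \in X$ and $\vt \in \vS$ with $\vt \geq \vrone$ satisfy $\vs \vee \vt \notin \vS$, then $\vs \wedge \vt \in \vS$, and this meet lies in~$X$ (since $\vrone \leq \vs \wedge \vt \leq \vs \leq \vrtwo$) and is strictly below~$\vs$ (for otherwise $\vs \leq \vt$, which gives the contradiction $\vs \vee \vt = \vt \in \vS$). Dually, a failure of $\sv$ to emulate~$\rvtwo$ produces an element strictly above~$\vs$ in~$X$. In particular, any $\leq$-minimal element of~$X$ must emulate $\vrone$ in~$\vS$, so the set $U := \{\vs \in X : \vs \text{ emulates } \vrone \text{ in } \vS\}$ is non-empty.

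With $U \neq \emptyset$ established, my plan is to choose $\vs$ to be $\leq$-maximal in~$U$ (by Zorn's lemma applied to chains, or immediately in the finite case) and claim that then $\sv$ also emulates $\rvtwo$. If this fails, a witness $\vu \in \vS$ with $\vu \geq \rvtwo$, $\vu \neq \vrtwo$ and $\sv \vee \vu \notin \vS$ yields $\sv \wedge \vu \in \vS$ by submodularity, so $\vs' := \vs \vee \uv \in \vS$. Since $\uv \leq \vrtwo$, we get $\vs' \in X$; and $\vs'$ is strictly above~$\vs$ (otherwise $\uv \leq \vs$ would force $\sv \vee \vu = \vu \in \vS$, contradicting the choice of~$\vu$). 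To contradict the maximality of $\vs$ in~$U$ it then suffices to show $\vs' \in U$, i.e.\ that $\vs'$ still emulates $\vrone$ in~$\vS$.

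This last verification is where the main difficulty lies. Given any $\vt \in \vS \setminus \{\rvone\}$ with $\vt \geq \vrone$, I need $\vs' \vee \vt = \vs \vee \vt \vee \uv \in \vS$. From $\vs \in U$ I already have $\vs \vee \vt \in \vS$, and applying structural submodularity to $\vs \vee \vt$ and $\uv$ yields either the join $\vs' \vee \vt \in \vS$ (in which case we are done) or only the meet $(\vs \vee \vt) \wedge \uv \in \vS$. Handling this latter \emph{bad} case is the main obstacle: I expect to resolve it by a second submodularity chase, exploiting both the specific way $\vs'$ arises from the failure witness~$\vu$ and the hypothesis that $\vrone, \vrtwo$ are non-trivial and non-degenerate, in order to produce an element of~$X$ that either lies strictly between~$\vs$ and~$\vs'$ but is still in~$U$, or else contradicts the membership $\vs \in U$ directly. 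Either way, the chosen extremality of~$\vs$ is contradicted and the lemma follows.
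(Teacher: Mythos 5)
Your plan shares with the paper the idea of obtaining $\vs$ by an extremal choice, but it is a genuinely different argument, and as written it has a real gap. The paper first reduces to a cleaner, exclusion-free statement about submodular subsets $M$ of finite lattices (Lemma~\ref{submod lattice}): for every $a\le b$ in $M$ there is a $z\in M$ that \emph{lifts} $a$ and \emph{pushes} $b$. Its proof takes a bad pair $(a,b)$ with $I(a,b)=\{u\in M:a\le u\le b\}$ inclusion-minimal, deduces that $a$ pushes everything in $I(a,b)\setminus\{b\}$ and $b$ lifts everything in $I(a,b)\setminus\{a\}$, and then shows that any witnesses $x,y$ of badness force $a\vee x=b$ and $b\wedge y=a$, so that $x\vee y=b\vee y\notin M$ and $x\wedge y=x\wedge a\notin M$, contradicting submodularity. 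That is a symmetric, two-sided minimal-counterexample argument, structurally quite unlike your one-sided maximisation of $\vs$ over $U$.

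Where you stop is exactly where the work is: saying you ``expect to resolve'' the bad case is not a proof, and the proposal as it stands does not establish the lemma. The gap is closable, but not by the route you gesture at. Non-triviality and non-degeneracy of $\vrone,\rvtwo$ are a red herring here; the paper's lattice lemma uses no such hypothesis, and neither does a correct repair of your argument. What is actually needed is a \emph{second} extremal choice: among all failure witnesses $\vu$ for $\sv$ emulating $\rvtwo$, choose one with $\uv$ minimal (subject to $\uv\le\vrtwo$ and $\vs\wedge\uv\notin\vS$). Then in your bad case, where $\vs\vee\vt\in\vS$ but $(\vs\vee\vt)\vee\uv\notin\vS$, submodularity gives $(\vs\vee\vt)\wedge\uv\in\vS$; this is again a witness (one checks $\vs\wedge\big((\vs\vee\vt)\wedge\uv\big)=\vs\wedge\uv\notin\vS$) and is strictly below $\uv$, contradicting the minimality. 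Even then, working directly with ``emulates'' rather than the paper's ``lifts''/``pushes'' forces you to verify that the new witness is not the excluded separation $\vrtwo$; the paper avoids this bookkeeping entirely by proving the lattice statement first and only afterwards noting that lifting and pushing imply emulating.
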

	     	
We will actually prove a slightly more general statement about submodular lattices. Let $(L, \vee, \wedge)$ be a lattice and let $M \sub L$. Given $x, y \in M$, we say that~$x$ \emph{pushes}~$y$ if $x \leq y$ and for any $z \in M$ with $z \leq y$ we have $x \wedge z \in M$. Similarly, we say that~$x$ \emph{lifts}~$y$ if $x \geq y$ and for any $z \in M$ with $z \geq y$ we have $x \vee z \in M$. Observe that both of these relations are reflexive and transitive: Every $x \in M$ pushes (lifts) itself and if~$x$ pushes (lifts)~$y$ and~$y$ pushes (lifts)~$z$, then~$x$ pushes (lifts)~$z$. We say that~$M$ is \emph{strongly separable} if for all $x, y \in M$ with $x \leq y$ there exists a $z \in M$ that lifts~$x$ and pushes~$y$. 
	    
The definitions of lifting, pushing and separable extend verbatim to a separation system within a universe of separations when regarded as a subset of the underlying lattice. These notions are strengthenings of the notions of emulating and separable: If ${\vs} \in {\vS}$ lifts ${\vr} \in {\vS}$, then~${\vs}$ emulates~${\vr}$ in~${\vS}$, and~${\vs}$ pushes~${\vr}$ if and only if~${\sv}$ lifts~${\rv}$. Similarly, if~${\vS}$ is strongly separable, then~${\vS}$ is separable. Lemma \ref{submod sep sys} is then an immediate consequence of the following:
	
\begin{lemma} \label{submod lattice}
Let~$L$ be a finite lattice and $M \sub L$ submodular. Then~$M$ is strongly separable.
\end{lemma}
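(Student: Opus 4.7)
The plan is to exhibit a specific $z$ with the desired properties by choosing $z$ extremally inside $M$. I would work with the set
$Z_{\vee} := \{\, z \in M : x \le z \le y \text{ and } z \text{ pushes } y\,\}$
and aim to show that its minimum element also lifts~$x$.

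First I would verify that $Z_{\vee}$ is nonempty and closed under $\wedge$: $y$ pushes $y$ trivially, and for $z_1, z_2 \in Z_{\vee}$ the element $z_1 \wedge z_2$ lies in $M$ (apply the pushing property of $z_1$ to $z_2 \le y$), and for any $u \in M$ with $u \le y$ we get $(z_1 \wedge z_2) \wedge u = z_1 \wedge (z_2 \wedge u) \in M$ by iterating pushing. Since $L$ is finite, $Z_{\vee}$ has a unique minimum $z$. I would then argue that $z$ lifts $x$, establishing the lemma.

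To show $z$ lifts $x$, I would argue by contradiction: if $B := \{\, w \in M : w \ge x,\ z \vee w \notin M\,\}$ is nonempty, pick $w \in B$ that is $\le$-\emph{maximal}. Submodularity then forces $a := z \wedge w \in M$, and $a$ lies strictly below $z$ (otherwise $z \le w$ gives $z \vee w = w \in M$) while still lying in $[x,y]$. The contradiction will follow by showing $a \in Z_{\vee}$. For any $u \in M$ with $u \le y$, we must show $a \wedge u \in M$. Since $z$ pushes $y$, $z \wedge u \in M$, and submodularity applied to the pair $z \wedge u$, $w$ produces either $(z \wedge u) \wedge w \in M$, which equals $a \wedge u$ and is what we want, or $(z \wedge u) \vee w \in M$. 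In the second case, let $b := (z \wedge u) \vee w$: then $b \ge w \ge x$ and $z \vee b = z \vee w \notin M$, so $b \in B$; maximality of $w$ combined with $b \ge w$ forces $b = w$, so $z \wedge u \le w$, whence $a \wedge u = (z \wedge u) \wedge w = z \wedge u \in M$.

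The one subtle point I expect to be the main pitfall is the direction of the extremal choice of the witness~$w$: taking $w$ to be \emph{maximal} in $B$ (rather than minimal, which might feel more natural) is exactly what makes the auxiliary element $b$ collapse back to $w$ in the troublesome case. With that choice in place, the argument is essentially three applications of the submodular dichotomy, and the analogous statement for separation systems (Lemma~\ref{submod sep sys}) follows at once by restating lifting and pushing through the involution.
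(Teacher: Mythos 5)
Your proof is correct, but it takes a genuinely different route from the paper's. The paper argues by choosing a minimal counterexample globally: it calls a pair $(a,b)$ with $a\le b$ \emph{bad} if no $z\in M$ both lifts $a$ and pushes $b$, picks a bad pair with the interval $I(a,b)=\{u\in M: a\le u\le b\}$ inclusion-minimal, proves that $a$ must then push everything in $I(a,b)\setminus\{b\}$ and $b$ lift everything in $I(a,b)\setminus\{a\}$, and finally derives a contradiction from submodularity by showing that the failure witnesses $x$ (for pushing) and $y$ (for lifting) satisfy $a\vee x=b$ and $b\wedge y=a$, whence $x\vee y=b\vee y\notin M$ and $x\wedge y=x\wedge a\notin M$. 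Your argument instead fixes a single pair $x\le y$ from the start, observes that the set $Z_\vee$ of pushers of $y$ lying above $x$ is nonempty and meet-closed, and shows that its minimum element $z$ already lifts $x$, using a maximal failure witness $w$ for the lifting property to produce a strictly smaller pusher $z\wedge w$. Your version is arguably cleaner: it produces a canonical witness (the least pusher in $[x,y]$) rather than arguing by global minimal counterexample, it needs no ``bad pair'' bookkeeping, and it localizes the whole argument to a single interval. The cost is symmetric: the paper's proof treats pushing and lifting completely symmetrically, while yours breaks the symmetry by privileging pushers; either way the final statement is symmetric, so nothing is lost. Your remark about the direction of the extremal choice of $w$ is exactly the subtle point that makes your version work, and the derivation of Lemma~\ref{submod sep sys} from Lemma~\ref{submod lattice} is immediate in both treatments.
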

     		
     		\begin{proof}
     		Call a pair $(a, b) \in M \times M$ \emph{bad} if $a \leq b$ and there is no $x \in M$ that lifts~$a$ and pushes~$b$. Assume for a contradiction that there was a bad pair and choose one, say $(a, b)$, such that $I(a, b) := \{ u \in M \colon a \leq u \leq b \}$ is minimal.
     		
     		We claim that~$a$ pushes every $z \in I(a,b) \setminus \{ b \}$. Indeed, assume for a contradiction~$a$ did not push some such~$z$. By minimality of $(a,b)$, the pair $(a, z)$ is not bad, so there is some $x \in M$ which lifts~$a$ and pushes~$z$. By assumption, $x \neq a$ and so by minimality, the pair $(x, b)$ is not bad, yielding a $y \in M$ which lifts~$x$ and pushes~$b$. By transitivity, it follows that~$y$ lifts~$a$. But then $(a,b)$ is not a bad pair, which is a contradiction. An analogous argument establishes that~$b$ lifts every $z \in I(a,b) \setminus \{ a \}$.
     		
     		Since $(a, b)$ is bad, $a$ does not push~$b$, so there is some $x \in M$ with $x \leq b$ for which $a \wedge x \notin M$. Similarly, there is a $y \in M$ with $y \geq a$ for which $b \vee y \notin M$. Since~$M$ is submodular, it follows that $a \vee x, b \wedge y \in M$. Note that $a \vee x, b \wedge y \in I(a,b)$. Furthermore, $x \leq a \vee x$ and $a \wedge x \notin M$, so~$a$ does not push $a \vee x$. We showed that~$a$ pushes every $z \in I(a,b) \setminus \{ b \}$, so it follows that $a \vee x = b$. Similarly, we find that $b \wedge y = a$. But then
     		\begin{align*}
     			x \vee y &= x \vee (a \vee y) = 	b \vee y \notin M , \\
     			x \wedge y &= (x \wedge b) \wedge y = x \wedge a \notin M .
     		\end{align*}
     		This contradicts the submodularity of~$M$.
     		\end{proof}	
	
As a result we obtain our tangle-tree duality theorem for $\F$-tangles of submodular separation systems, which we restate:
	
	\begin{Fduality}\label{submodular duality}
	Let ${\vU}$ be a universe of separations and ${\vS} \sub {\vU}$ a submodular separation system. Let $\F \sub 2^{{\vU}}$ be a set of stars which is standard for~${\vS}$ and closed under shifting. Then exactly one of the following holds:
	     		\begin{enumerate}[\rm (i)]\itemsep=0pt\vskip-\smallskipamount\vskip0pt
	     			\item There exists an $\F$-tangle of~$S$.
	     			\item There exists an $S$-tree over~$\F$.
	     		\end{enumerate}
	     	\end{Fduality}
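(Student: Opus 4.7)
The plan is to deduce Theorem~\ref{submodular duality} directly from the basic tangle-tree duality theorem (Theorem~\ref{duality diestel oum}) of~\cite{TangleTreeAbstract}, whose conclusion is exactly the required dichotomy but whose hypothesis demands $\F$-separability of~$\vS$ rather than mere structural submodularity. The strategy is to bridge this gap using the two lemmas already assembled earlier in this section.

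First, I would apply Lemma~\ref{submod sep sys} to the submodular separation system~$\vS$ to conclude that $\vS$ is separable. This lemma is in turn an immediate corollary of the lattice-theoretic Lemma~\ref{submod lattice} applied to $M = \vS$ inside the lattice underlying~$\vU$: once strong separability is established, the passage to separability is automatic because lifting implies emulating, and pushing is dual to lifting under~${}^*$.

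Second, I would combine the separability of~$\vS$ with the hypothesis that $\F$ is closed under shifting, invoking Lemma~\ref{sep and closed}, to upgrade separability to $\F$-separability. Since $\F$ is by hypothesis a set of stars standard for~$\vS$, every premise of Theorem~\ref{duality diestel oum} is now satisfied, and that theorem yields the exclusive dichotomy between the existence of an $\F$-tangle of~$S$ and the existence of an $S$-tree over~$\F$.

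The real technical content of the argument has already been discharged inside the proof of Lemma~\ref{submod sep sys}, via the inductive argument on the intervals $I(a,b) = \{u \in M : a \leq u \leq b\}$ of Lemma~\ref{submod lattice}, where the submodularity of~$M$ is used to rule out a minimal bad pair. Given that lemma, the proof of Theorem~\ref{submodular duality} itself is essentially a one-line synthesis of existing results, and I anticipate no obstacle at this final stage beyond citing the three ingredients in the correct order.
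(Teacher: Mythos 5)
Your proposal is correct and follows exactly the same route as the paper: apply Lemma~\ref{submod sep sys} to get separability, combine with closure under shifting via Lemma~\ref{sep and closed} to get $\F$-separability, and invoke Theorem~\ref{duality diestel oum}. You also correctly locate the real work in Lemma~\ref{submod lattice}, so there is nothing to add.
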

\begin{proof}
Since~${\vS}$ is submodular, Lemma~\ref{submod sep sys} implies that~${\vS}$ is separable. Since~$\F$ is closed under shifting, it follows from Lemma~\ref{sep and closed} that~${\vS}$ is $\F$-separable. The result then follows by Theorem~\ref{duality diestel oum}.
\end{proof}
	
We will now use Theorem~\ref{submodular duality} to prove Theorem~\ref{TTDThm2}, which in turn  implies Theorem~\ref{TTDThm}. Recall that we are considering a downclosed subset $Q \sub {\vU}$ of a distributive universe of separations, and a submodular separation system ${\vS}$ without degenerate elements in $\subseteq {\vU}$, and we wish to prove a tangle-tree duality theorem for abstract tangles of ${\vS}$ extending $Q$. Note that these are precisely the $(\T \cup \F_Q)$-tangles of ${\vS}$. However, since the family~$\F$ in Theorem~\ref{submodular duality} is assumed to be a set of stars, we cannot work directly with $\T$. Instead we will work with $\T^*$, the set of stars in $\T$. It will turn out that, since~${\vS}$ has no degenerate elements, this will not change the set of $\T$-tangles (cf.\ Lemma~\ref{tangle stars}). So, we will first show that we can apply Theorem~\ref{submodular duality} with $\F = \T_Q$, where $\T_Q := \T^* \cup \F_Q$, and then show that the $\T^*$-tangles are precisely the $\T$-tangles. Theorem~\ref{TTDThm2} will then follow.
	     	
Let us first prove the following simple fact, which will be useful in a few different situations.
	     	
	     	   	\begin{lemma} \label{push co-small}
		Let~${\vU}$ be a distributive universe of separations. Let ${\vu}, {\vv}, {\vw} \in {\vU}$. If ${\vu} \leq {\vv}$ and ${\vv} \vee {\vw}$ is co-small, then ${\vv} \vee ({\vw} \wedge {\uv})$ is co-small.
	\end{lemma}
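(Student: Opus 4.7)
The plan is to unwind the definition of co-smallness into a lattice inequality and then verify it by distributivity together with the order-reversing involution. Recall that an oriented separation $\vs$ is co-small precisely when $\sv \leq \vs$. Since $^*$ is an order-reversing involution on the lattice~$\vU$, it satisfies the De~Morgan identities $(x \vee y)^* = x^* \wedge y^*$ and $(x \wedge y)^* = x^* \vee y^*$. Applying this, the claim that $\vv \vee (\vw \wedge \uv)$ is co-small is equivalent to
\[
\vleft \wedge (\wv \vee \vu) \;\leq\; \vv \vee (\vw \wedge \uv).
\]

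Next I would use distributivity of~$\vU$ on both sides: the left-hand side becomes $(\vleft \wedge \wv) \vee (\vleft \wedge \vu)$, while the right-hand side becomes $(\vv \vee \vw) \wedge (\vv \vee \uv)$. It therefore suffices to show that each of the two join summands on the left is below each of the two meet factors on the right, i.e.\ to verify four inequalities.

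For $\vleft \wedge \wv$: the hypothesis that $\vv \vee \vw$ is co-small gives $\vleft \wedge \wv \leq \vv \vee \vw$ directly, handling one factor. For the other, $\vu \leq \vv$ implies $\vleft \leq \uv$, hence $\vleft \wedge \wv \leq \vleft \leq \uv \leq \vv \vee \uv$. For $\vleft \wedge \vu$: we have $\vleft \wedge \vu \leq \vu \leq \vv$, which is trivially below both meet factors on the right.

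The argument is thus a short lattice-theoretic verification and there is no real obstacle; the only tools used are distributivity of $\vU$ (granted by hypothesis) and the De~Morgan behaviour of the involution, together with the single order-theoretic consequence $\vu \leq \vv \Rightarrow \vleft \leq \uv$ of the hypothesis $\vu \leq \vv$.
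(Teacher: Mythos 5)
Your proof is correct and uses the same essential ingredients as the paper's: distributivity, the De~Morgan behaviour of the involution, and the single consequence $\vu \le \vv \Rightarrow \vleft \le \uv$. The paper phrases it slightly differently, factoring through the auxiliary co-small elements $\vs := \vv \vee \vw$ and $\vt := \vu \vee \uv$ and the chain $\xv \le \sv \vee \tv \le \vs \wedge \vt \le \vx$, whereas you apply distributivity to both sides and verify the four resulting order relations directly, but this is a presentational difference rather than a different route.
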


\proof
		Let ${\vx} := {\vv} \vee ({\vw} \wedge {\uv})$. By distributivity of~${\vU}$
		\[
		{\vx} = ( {\vv} \vee {\vw} ) \wedge ({\vv} \vee {\uv} ) \geq ({\vv} \vee {\vw}) \wedge ({\vu} \vee {\uv} ).
		\]
		Let ${\vs} := {\vv} \vee {\vw}$ and ${\vt} := {\vu} \vee {\uv}$. Then ${\sv} \leq {\vs}$ by assumption and \mbox{${\sv} \leq {\vleft} \leq {\vt}$}. Further ${\tv} \leq {\vu} \leq {\vt}$ and ${\tv} \leq {\vu} \leq {\vv}$. Therefore
		\[
			{\xv} \leq {\sv} \vee {\tv} \leq {\vs} \wedge {\vt} \leq {\vx} .\eqno\qed
		\]

		In order to apply Theorem \ref{submodular duality} with $\F = \T_Q$, we need to show that $T_Q$ is closed under shifting.
		
		\begin{lemma} \label{closed under shifting}
     			If $Q \sub {\vU}$ is down-closed and~${\vU}$ is distributive, then~$\T_Q$ is closed under shifting.
     		\end{lemma}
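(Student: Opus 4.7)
The plan is to split according to whether the star $\sigma\in\T_Q$ lies in $\F_Q$ or in $\T^*$. In both cases, Lemma~\ref{shift is star} already guarantees that $\sigma^{\vs}_{\vx}$ is a star in~$\vS$, so what remains is to verify membership in the appropriate family of stars on the right-hand side of $\T_Q=\T^*\cup\F_Q$.

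The $\F_Q$ case is the easy one: here $\sigma=\{\vx\}$ with $\xv\in Q$, and the shift is the singleton $\{\vx\vee\vs\}$. Its inverse $\xv\wedge\sv$ lies below $\xv\in Q$, so down-closedness of~$Q$ places it back in~$Q$; combined with non-degeneracy of $\vx\vee\vs$ (guaranteed by Lemma~\ref{shift is star}), this gives $\{\vx\vee\vs\}\in\F_Q\sub\T_Q$.

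The substantial case is $\sigma\in\T^*$, i.e.\ a star of at most three elements whose join is co-small. Setting $\vt:=\bigvee(\sigma\setminus\{\vx\})$, distributivity of~$\vU$ rewrites the join of the shifted star as
\[
\bigvee\sigma^{\vs}_{\vx} \;=\; (\vx\vee\vs)\vee(\vt\wedge\sv),
\]
so the task reduces to showing this expression is co-small. Since $\bigvee\sigma=\vx\vee\vt$ is co-small and co-smallness is upward-closed, the larger element $(\vx\vee\vs)\vee\vt$ is co-small too; an application of Lemma~\ref{push co-small} with $\vu:=\vs$, $\vv:=\vx\vee\vs$ and $\vw:=\vt$ (noting $\vs\leq\vx\vee\vs$ is automatic) then delivers co-smallness of $(\vx\vee\vs)\vee(\vt\wedge\sv)$ directly. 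The (degenerate) sub-case $\sigma=\{\vx\}$ collapses to: $\vx$ co-small implies $\vx\vee\vs$ co-small, handled without invoking Lemma~\ref{push co-small}.

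The main obstacle I anticipate is recognizing that Lemma~\ref{push co-small} is tailor-made for this very situation: the shift $\sigma^{\vs}_{\vx}$ meets each element of $\sigma\setminus\{\vx\}$ with $\sv$, and that lemma is precisely the statement that such meets preserve co-smallness of joins. Once that link is made, the rest is routine bookkeeping --- distributivity converts the meet-of-join into a join-of-meets, and the at-most-three-element bound required by $\T$ is automatic since $|\sigma^{\vs}_{\vx}|\leq|\sigma|\leq 3$.
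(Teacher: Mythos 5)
Your proof is correct and follows the same route as the paper: split on $\sigma\in\T^*$ versus $\sigma\in\F_Q$, invoke Lemma~\ref{shift is star} for star-hood, apply Lemma~\ref{push co-small} with $\vu=\vs$, $\vv=\vx\vee\vs$, $\vw=\bigvee(\sigma\setminus\{\vx\})$ in the $\T^*$ case, and use down-closedness of $Q$ plus the non-degeneracy provided by Lemma~\ref{shift is star} in the $\F_Q$ case. Your explicit note that $(\vx\vee\vs)\vee\vt\ge\bigvee\sigma$ is co-small, and the aside on the singleton sub-case, are slight elaborations that the paper leaves implicit, but the argument is identical in substance.
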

     		
     		\begin{proof}
     			Let ${\vr} \in {\vS}$ non-trivial and non-degenerate with $\{ {\rv} \} \notin \F$. Let ${\vs} \in {\vS}$ emulate~${\vr}$ in~${\vS}$, let $\T_Q \ni \sigma \sub {\vS} \setminus \{ {\rv} \}$ and ${\vr} \leq {\vx} \in \sigma$. We have to show that $\sigma\!_{{\vx}}^{{\vs}} \in \T_Q$. From Lemma~\ref{shift is star} we know that $\sigma\!_{{\vx}}^{{\vs}}$ is a star, so we only need to verify that $\sigma\!_{{\vx}}^{{\vs}} \in \T^* \cup \F_Q$.

     			Suppose first that $\sigma \in \T^*$. Let ${\vw} := \bigvee (\sigma \setminus \{ {\vx} \})$. Applying Lemma~\ref{push co-small} with ${\vu} = {\vs}$ and ${\vv} = {\vx} \vee {\vs}$, we see that
     			\[
     				\bigvee \sigma\!_{{\vx}}^{{\vs}} = ({\vx} \vee {\vs}) \vee ({\vw} \wedge {\sv} )
     			\]
     			is co-small. Since~$\sigma\!_{{\vx}}^{{\vs}}$ has at most three elements, it follows that $\sigma\!_{{\vx}}^{{\vs}} \in \T$.

\goodbreak
     			
     			Suppose now that $\sigma \in \F_Q$. Then $\sigma = \{ {\vx} \}$ and ${\xv} \in Q$. As~$Q$ is down-closed, we have ${\xv} \wedge {\sv} \in Q$. Since~$\sigma\!_{{\vx}}^{{\vs}}$ is a star, ${\xv} \wedge {\sv}$ is non-degenerate and therefore
     			\[
     			\sigma\!_{{\vx}}^{{\vs}} = \{ {\vx} \vee {\vs} \} = \{ ({\xv} \wedge {\sv})^* \} \in \F_Q .
     			\]
     			
	\end{proof}		
	     	
	     	\begin{lemma} \label{tangle stars}
				Let~${\vU}$ be a distributive universe of separations and let ${\vS} \sub {\vU}$ be a submodular separation system without degenerate elements. Then the $\T^*$-tangles are precisely the abstract tangles. 
     		\end{lemma}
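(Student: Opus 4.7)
Since $\T^* \sub \T$, every $\T$-tangle is automatically a $\T^*$-tangle; only the converse requires proof. Let~$O$ be a $\T^*$-tangle of~$S$ and suppose, for a contradiction, that $O$ is not a $\T$-tangle, so there exists some $\sigma \sub O$ with $\sigma \in \T$. Among all such~$\sigma$, choose one that lexicographically minimizes the pair $(|\sigma|, d(\sigma))$, where $d(\sigma)$ counts the pairs $\{\va, \vb\} \sub \sigma$ violating the star condition $\va \leq \bv$. The elements of~$\sigma$ are non-degenerate, as $\vS$ has no degenerate elements, so if $d(\sigma) = 0$ then $\sigma$ is a star in $\T^* \cap 2^O$, contradicting that $O$ avoids~$\T^*$. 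The plan is to derive a contradiction from any putative bad pair in~$\sigma$.

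Since singletons are vacuously stars, $|\sigma| \geq 2$. Pick a bad pair $\vr, \vs \in \sigma$, so $\vr \not\leq \sv$; since $\sigma \sub O$ and $O$ is an orientation, $r \neq s$. Consistency of~$O$ together with $\vr, \vs \in O$ excludes $\rv \leq \vs$ and $\sv \leq \vr$, so we must have $\vr \leq \vs$, $\vs \leq \vr$, or $r$ and~$s$ cross. In either nested case the smaller of $\vr, \vs$ is absorbed into the join of the other, so removing it from~$\sigma$ yields a strictly smaller $\sigma' \in \T \cap 2^O$, contradicting the minimal choice of~$\sigma$; hence $r$ and~$s$ must cross.

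Structural submodularity of~$\vS$ applied to $\vr, \sv \in \vS$ gives that one of $\vr \wedge \sv$, $\vr \vee \sv$ lies in~$\vS$; since $\vS$ is closed under inversion this is equivalent to saying one of $\vr \wedge \sv$, $\vs \wedge \rv$ lies in~$\vS$, and by the symmetry between~$r$ and~$s$ I may assume $\vr' := \vr \wedge \sv \in \vS$. Because $r$ and~$s$ cross, $\vr' \neq \vr$ (else $\vr \leq \sv$) and $\vr' \neq \rv$ (else $\vs \leq \vr$), so $r' \neq r$ and consistency of~$O$ applied to $\vr' < \vr \in O$ forces $\vr' \in O$. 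Let $\sigma'$ be obtained from $\sigma$ by substituting~$\vr'$ for~$\vr$. To see $\sigma' \in \T$, apply Lemma~\ref{push co-small} with $\vu = \vv = \vs$ and $\vw = \bigvee(\sigma \setminus \{\vs\})$: the hypothesis $\vv \vee \vw = \bigvee\sigma$ co-small holds by assumption, and the conclusion that $\vs \vee (\vw \wedge \sv)$ is co-small, combined with distributivity of~$\vU$ to expand $\vw \wedge \sv$ as a join of terms $\vx \wedge \sv$ (in which the summand for $\vx = \vr$ equals~$\vr'$ and every other summand is $\leq \vx$), yields $\vs \vee (\vw \wedge \sv) \leq \bigvee\sigma'$, hence co-smallness of $\bigvee\sigma'$. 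Finally $d(\sigma') \leq d(\sigma) - 1$: the pair $\{\vr', \vs\}$ is now good because $\vr' \leq \sv$, and any other pair $\{\vr', \vt\}$ involving the replaced element is at least as good as $\{\vr, \vt\}$ was, since $\vr' \leq \vr$ preserves any previously-holding star inequality $\vr \leq \tv$. This contradicts the minimal choice of~$\sigma$. The main subtlety in this plan is the preservation of co-smallness for $\bigvee\sigma'$; this is the step where the distributivity hypothesis on~$\vU$ is essentially used, together with Lemma~\ref{push co-small}.
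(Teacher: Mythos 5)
Your proof is correct and follows essentially the same approach as the paper: pick a minimal $\T$-set $\sigma\sub O$ witnessing failure, then uncross a bad pair by passing to the submodular corner $\vr'=\vr\wedge\sv\in\vS$, using Lemma~\ref{push co-small} together with distributivity to show $\bigvee\sigma'$ is still co-small, and consistency to keep $\sigma'\sub O$. The only cosmetic difference is your extremal choice (lexicographic in $(|\sigma|,d(\sigma))$ with $d$ counting star-violations) versus the paper's (minimize the number of crossing pairs, then inclusion-minimal); both work for the same reasons.
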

     		
     		\begin{proof}     			
     			Since $\T^* \sub \T$, every abstract tangle is also a $\T^*$-tangle. We only need to show that, conversely, every $\T^*$-tangle in fact avoids~$\T$. 
     			
					For $\sigma \in \T$, let $d( \sigma )$ be the number of pairs ${\vs}, {\vt} \in \sigma$ which are not nested. Let~$O$ be a consistent orientation of~$S$ and suppose~$O$ was not an abstract tangle. Choose $\T \ni \sigma \sub O$ such that $d( \sigma)$ is minimum and, subject to this, $\sigma$ is inclusion-minimal. We will show that~$\sigma$ is indeed a star, thus showing that~$O$ is not a $\T^*$-tangle.
					
					If~$\sigma$ contained two comparable elements, say ${\vs} \leq {\vt}$, then $\sigma ' := \sigma \setminus \{ {\vs} \}$ satisfies $\sigma ' \in \T$, $\sigma ' \sub O$ and $d( \sigma') \leq d( \sigma)$, violating the fact that~$\sigma$ is inclusion-minimal. Hence~$\sigma$ is an antichain. Since~${\vS}$ has no degenerate elements, it follows from the consistency of~$O$ that any two nested ${\vs}, {\vt} \in \sigma$ satisfy ${\vs} \leq {\tv}$. To show that~$\sigma$ is a star, it thus suffices to prove that any two elements are nested. 
					
					Suppose that~$\sigma$ contained two crossing separations, say ${\vs}, {\vt} \in \sigma$. By submodularity of~${\vS}$, at least one of ${\vs} \wedge {\tv}$ and ${\sv} \wedge {\vt}$ lies in~${\vS}$. By symmetry we may assume that ${\vr} := {\vs} \wedge {\tv} \in {\vS}$. Let $\sigma ' := ( \sigma \setminus \{ {\vs} \} ) \cup \{ {\vr} \}$. Since~$O$ is consistent, ${\vr} \leq {\vs}$ and $r \neq s$, it follows that ${\vr} \in O$ and so $\sigma ' \sub O$ as well. 
					Let ${\vw} = \bigvee ( \sigma \setminus \{ {\vt} \} )$. As ${\vt} \vee {\vw} = \bigvee \sigma$ is co-small, we can apply Lemma~\ref{push co-small} with ${\vu} = {\vv} = {\vt}$ to deduce that ${\vt} \vee ({\vw} \wedge {\tv})$ is co-small as well. But
					\[
					{\vt} \vee ({\vw} \wedge {\tv}) = {\vt} \vee \bigvee_{{\vx} \in \sigma \setminus \{ {\vt} \}} ({\vx} \wedge {\tv}) \leq \bigvee \sigma ' ,
					\]
					so $\bigvee \sigma '$ is also co-small and $\sigma ' \in \T$.
					
					We now show that $d( \sigma ') < d( \sigma)$. Since~$s$ and~$t$ cross, while~$r$ and~$t$ do not, it suffices to show that every ${\vx} \in \sigma \setminus \{ {\vs} \}$ which is nested with~${\vs}$ is also nested with~${\vr}$. But for every such~${\vx}$ we have ${\vs} \leq {\xv}$. Since ${\vr} \leq {\vs}$, we get ${\vr} \leq {\xv}$ as well, showing that~$r$ and~$x$ are nested. So in fact $d( \sigma ') < d(\sigma)$, which is a contradiction. This completes the proof that~$\sigma$ is nested and therefore a star.
     		\end{proof}
	     	
We are now in a position to prove Theorem~\ref{TTDThm2}.
	     	
	     	\begin{proof}[Proof of Theorem~\ref{TTDThm2}]
	     	By Lemma \ref{submod sep sys}, ${\vS}$ is separable, and by Lemma \ref{closed under shifting}, $\T_Q$ is closed under shifting. Therefore, by Theorem \ref{submodular duality}, there is no $S$-tree over $\T^* \cup \F_Q$, if and only if $S$ has a $T_Q$-tangle, that is, a $\T^*$-tangle extending $Q$.
	     	
	     	However, since ${\vU}$ is distributive and ${\vS}$ contains no degenerate elements, Lemma \ref{tangle stars} implies that $S$ has a $\T^*$-tangle extending $Q$ if and only if $S$ has an abstract tangle extending $Q$.
	     	\end{proof}
	      
     \end{section}

     \begin{section}{Special cases and applications}\label{apps}
     
     	\begin{subsection}{Tangles in graphs and matroids}
     			     			
     			We briefly indicate how tangles in graphs and matroids can be seen as special cases of abstract tangles in separation systems. Tangles in graphs and hypergraphs were introduced by Robertson and Seymour in~\cite{GMX}, but a good deal of the work is done in the setting of connectivity systems. Geelen, Gerards and Whittle~\cite{BranchDecMatroids} made this more explicit and defined tangles as well as the dual notion of branch-decompositions for connectivity systems, an approach that we will follow.
     			
     			Let~$X$ be a finite set and $\lambda : 2^X \to \mathbb{Z}$ a map assigning integers to the subsets of~$X$ such that $\lambda(X \setminus A) = \lambda(A)$ for all $A \sub X$ and 
     			\[
     			\lambda( A \cup B) + \lambda(A \cap B) \leq \lambda(A) + \lambda(B)
     			\]
     			 for all $A,B \sub X$. The pair $(X, \lambda)$ is then called a \emph{connectivity system}.
     			
     			Both graphs and matroids give rise to connectivity systems. For a given graph~$G$, we can take $X := E(G)$ and define $\lambda(F)$ as the number of vertices of~$G$ incident with edges in both~$F$ and $E \setminus F$. Given a matroid~$M$ with ground-set~$X$ and rank-function~$r$, we take~$\lambda$ to be the connectivity function $\lambda(A) := r(A) + r(X \setminus A) - r(X)$.
     			
     			Now consider~$2^X$ as a universe of separations with set-inclusion as the partial order and $A^* = X \setminus A$ as involution. For an integer~$k$, the set~${\vSk}$ of all sets~$A$ with $\lambda(A) < k$ is then a submodular separation system. Let $Q := \{ \emptyset \} \cup \{ \{ x \} \colon x \in X \}$ consist of the empty-set and all singletons of~$X$ and note that~$Q$ is down-closed.

     			A \emph{tangle of order~$k$} of $(X, \lambda)$, as defined in~\cite{BranchDecMatroids}, is then precisely an abstract tangle extending~$Q$. It is easy to see that $(X, \lambda)$ has a \emph{branch-decomposition} of width~$<\! k$ if and only if there exists an $S_k$-tree over $\T^* \cup \F_Q$. Theorem~\ref{TTDThm2} then yields the classic duality theorem for tangles and branch-decompositions in connectivity systems, see~\cite{GMX, BranchDecMatroids}.
     			
     	\end{subsection}

     	\begin{subsection}{Clique separations}\label{sec:cliques}
     
     		We now describe a submodular separation system that is not derived from a submodular order function, and provide a natural set of stars for which Theorem~\ref{submodular duality} applies.
     		
     		Let $G = (V, E)$ be a finite graph and~${\vU}$ the universe of all separations of~$G$, that is, pairs $(A, B)$ of subsets of~$V$ with $V = A \cup B$ such that there is no edge between $A \setminus B$ and $B \setminus A$. Here the partial order is given by $(A, B) \leq (C,D)$ if and only if $A \sub C$ and $B \supseteq D$, and the involution is simply $(A,B)^* = (B,A)$. For $(A, B) \in {\vU}$, we call $A \cap B$ the \emph{separator} of $(A, B)$. It is an \emph{$a$-$b$-separator} if $a \in A \setminus B$ and $b \in B \setminus A$. We call $A \cap B$ a \emph{minimal separator} if there exist $a \in A \setminus B$ and $b \in B \setminus A$ for which $A \cap B$ is an inclusion-minimal $a$-$b$-separator.
     	
     	Recall that a \emph{hole} in a graph is an induced cycle on more than three vertices. A graph is \emph{chordal} if it has no holes.

     \begin{theorem}[Dirac~\cite{dirac61}] \label{dirac chordal}
     	A graph is chordal if and only if every minimal separator is a clique.
     \end{theorem}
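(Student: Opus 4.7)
The plan is to prove the two implications separately, both by standard arguments from the folklore of chordal graphs.

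For the forward implication, I would let $S$ be a minimal $a$-$b$-separator in a chordal graph $G$ and let $A, B$ be the components of $G - S$ containing $a, b$ respectively. First I would establish the standard fact that by minimality of $S$, every $s \in S$ has at least one neighbor in $A$ and at least one in $B$: otherwise $S \setminus \{s\}$ would already separate $a$ from $b$, since any path from $a$ to $b$ in $G - (S \setminus \{s\})$ that used $s$ would have to enter $s$ from a vertex in the component of $a$ or the component of $b$. Given this, to show $S$ is a clique I would pick any two distinct $u, v \in S$ and choose a shortest $u$-$v$-path $P_A$ with interior in $A$ and a shortest $u$-$v$-path $P_B$ with interior in $B$; both exist by the neighbor property. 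These paths each have length at least $2$, so concatenating them produces a cycle of length at least $4$. Because $P_A$ and $P_B$ are shortest, neither has an internal chord, and because $S$ separates $A$ from $B$, there are no edges between their interiors. The only possible chord of the cycle is therefore $uv$, so chordality of $G$ forces $uv \in E(G)$.

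For the converse, I would assume every minimal separator of $G$ is a clique and suppose for a contradiction that $G$ contains a hole $C = v_1 v_2 \cdots v_k v_1$ with $k \geq 4$. Since $v_1$ and $v_3$ are non-adjacent, the set $V(G) \setminus \{v_1, v_3\}$ is a $v_1$-$v_3$-separator, which I would shrink to a minimal $v_1$-$v_3$-separator $S$. The sub-path $v_1 v_2 v_3$ of $C$ forces $v_2 \in S$, and the complementary sub-path $v_1 v_k v_{k-1} \cdots v_4 v_3$ (of length $k - 1 \geq 3$) forces $v_i \in S$ for some $i \in \{4, \dots, k\}$. By hypothesis $S$ is a clique, so $v_2 v_i \in E(G)$; but this is a chord of the hole $C$, a contradiction.

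The main subtle point, and the place where care is needed, is the forward direction: one must verify that the cycle built from $P_A$ and $P_B$ is genuinely induced apart from the potential edge $uv$. This uses both the shortest-path choice (killing internal chords within each side) and the fact that $S$ disconnects $A$ from $B$ in $G$ (killing cross-chords). The converse is comparatively routine, amounting to locating two elements of a hypothetical minimal separator that must remain non-adjacent inside the hole.
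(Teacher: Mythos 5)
The paper does not actually prove this theorem---it is cited as a known result of Dirac~\cite{dirac61}, so there is no in-paper argument to compare against. That said, your proof is correct and is the standard textbook argument. In the forward direction, the key points all check out: by minimality each $s\in S$ has neighbours in both components $A$ and $B$; taking shortest $u$-$v$-paths through $A$ and through $B$ kills internal chords; disjointness of $A$ and $B$ (different components of $G-S$) kills cross-chords as well as any risk that the two paths share an interior vertex; and the resulting cycle has length at least $4$, so chordality forces the only remaining candidate chord $uv$. In the converse direction, shrinking $V\setminus\{v_1,v_3\}$ to a minimal $v_1$-$v_3$-separator $S$, observing that $S$ must meet the interior of each of the two $v_1$-$v_3$-arcs of the hole (forcing $v_2\in S$ and some $v_i\in S$ with $4\le i\le k$), and then using that $S$ is a clique to produce a chord $v_2 v_i$ is exactly the usual argument, and it is watertight since $v_2$ and $v_i$ are non-consecutive on the hole whenever $i\ge 4$.
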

     		
     		 Let~${\vS}$ be the set of all $(A, B) \in {\vU}$ for which $G[A \cap B]$ is a clique. We call these the \emph{clique separations}. Note that~${\vS}$ is closed under involution and therefore a separation system. To avoid trivialities, we will assume that the graph~$G$ is not itself a clique. In particular, this implies that~${\vS}$ contains no degenerate elements.
     		
     		\begin{lemma} \label{clique sep submodular}
     			Let $s, t \in S$. At least three of the four corners of~$s$ and~$t$ are again in~${\vS}$. In particular, ${\vS}$ is submodular.
     		\end{lemma}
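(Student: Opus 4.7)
The plan is to write the oriented separations as $\vs = (A,B)$ and $\vt = (C,D)$, so that by hypothesis both $A \cap B$ and $C \cap D$ are cliques of~$G$. Each of the four corners has as its separator a union of two sets, one contained in $A \cap B$ and the other in $C \cap D$; for instance $\vs \wedge \vt = (A \cap C, B \cup D)$ has separator $(A \cap B \cap C) \cup (A \cap C \cap D)$. A short case analysis on which of these two sides a pair of vertices in the separator belongs to, using that any pair landing inside $R := A \cap B \cap C \cap D$ is automatically connected through one of the two clique separators, shows that this corner fails to lie in~$\vS$ only if there is a non-edge between
\[
X' := A \cap B \cap C \setminus D \quad\text{and}\quad Z' := A \cap C \cap D \setminus B;
\]
in particular, failure forces both $X'$ and $Z'$ to be non-empty. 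The analogous condition for the other three corners involves the pairs $(X',W')$, $(Y',Z')$, and $(Y',W')$, where $Y' := A \cap B \cap D \setminus C$ and $W' := B \cap C \cap D \setminus A$.

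The heart of the proof is then the observation that at most one of $X', Y'$, and at most one of $Z', W'$, is non-empty. Indeed, any $x \in X'$ and $y \in Y'$ are distinct (since $X' \sub \overline{D}$ while $Y' \sub D$) and both lie in the clique $A \cap B$, so $xy \in E$; but $x \in C \setminus D$ and $y \in D \setminus C$, which is forbidden by the separation $t = (C,D)$. The symmetric argument, using $s = (A,B)$ and the clique $C \cap D$, rules out that $Z'$ and $W'$ are both non-empty.

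Combining these, at most one of the four pairs in $\{X',Y'\}\times\{Z',W'\}$ consists of two non-empty sets, so at most one of the four corners can fail to lie in~$\vS$. Submodularity then follows since $\vs \wedge \vt$ is one corner while $\vs \vee \vt$ is the involute of the corner $\sv \wedge \tv$, and $\vS$ is closed under involution. I expect the main delicacy to be the initial bookkeeping that isolates the $X'$-vs-$Z'$ reduction; once vertices of a separator are correctly sorted into the four regions, the two subsequent clique/separation incompatibility arguments are essentially immediate.
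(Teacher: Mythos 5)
Your proof is correct, and it rests on exactly the same key observation as the paper's: a clique cannot straddle a separation. Your pair of claims that at most one of $X',Y'$ is non\-empty and at most one of $Z',W'$ is non\-empty is precisely the paper's statement that $A\cap B\sub C$ or $A\cap B\sub D$, and that $C\cap D\sub A$ or $C\cap D\sub B$ (note that $A\cap B\sub C\Leftrightarrow Y'=\emptyset$ since $V\setminus C=D\setminus C$, and likewise for the other three containments). Where the paper then picks a WLOG normalisation ($A\cap B\sub C$, $C\cap D\sub A$) and observes that the separators of the three corners other than $\vs\wedge\vt$ are then each contained in the clique $A\cap B$ or the clique $C\cap D$, you instead keep all four regions in play and argue that at most one of the four pairs $\{X',Y'\}\times\{Z',W'\}$ can consist of two non\-empty sets, so at most one corner can harbour a non\-edge. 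This is an unfolded version of the same argument: more explicit bookkeeping, no WLOG, but no new idea; the paper's route is shorter because the symmetry reduction collapses your four cases to one. Your closing remark that submodularity follows because $\vs\vee\vt=(\sv\wedge\tv)^*$ and $\vS$ is involution\-closed is exactly right and is implicit in the paper's ``In particular'' clause.
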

     		
     		\begin{proof}
     			Let ${\vs} = (A,B)$ and ${\vt} = (C,D)$. Since $G[A \cap B]$ is a clique and $(C, D)$ is a separation, we must have $A \cap B \sub C$ or $A \cap B \sub D$, without loss of generality $A \cap B \sub C$. Similarly, it follows that $C \cap D \sub A$ or $C \cap D \sub B$; we assume the former holds. For each corner other than ${\vs} \wedge {\vt} = (A \cap C, B \cup D)$, the separator is a subset of either $A \cap B$ or $C \cap D$ and therefore the subgraph it induces is a clique. This proves our claim.	     			
     		\end{proof}

     Suppose that the graph~$G$ contains a hole~$H$. Then for every $(A, B) \in {\vS}$,  either $H \sub A$ or $H \sub B$. In this way, every hole~$H$ induces an orientation 
     \[
     O_H := \{ (A,B) \in {\vS} \colon H \sub B \}
     \]
      of~${\vS}$. We now describe these orientations as tangles over a suitable set of stars.
     
     Let $\F \sub 2^{{\vU}}$ be the set of all sets $\{ (A_1, B_1), \ldots (A_n, B_n) \} \sub {\vU}$ for which $G[\bigcap B_i]$ is a clique (note that the graph without any vertices is a clique). As usual, we denote by~$\F^*$ the set of all elements of~$\F$ which are stars.
     
     \goodbreak
     
     \begin{theorem} \label{holes as tangles}
     	Let~$O$ be an orientation of~${S}$. Then the following are equivalent:
     	\begin{enumerate}[\rm (i)]\itemsep=0pt\vskip-\smallskipamount\vskip0pt
     		\item $O$ is an $\F^*$-tangle.
     		\item $O$ is an $\F$-tangle.
     		\item There exists a hole~$H$ with $O = O_H$.
     	\end{enumerate}
     \end{theorem}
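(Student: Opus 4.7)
I will argue via the cycle $(\mathrm{iii})\Rightarrow(\mathrm{ii})\Rightarrow(\mathrm{i})\Rightarrow(\mathrm{iii})$. The implication $(\mathrm{ii})\Rightarrow(\mathrm{i})$ is immediate from $\F^*\sub\F$. For $(\mathrm{iii})\Rightarrow(\mathrm{ii})$, given a hole~$H$, I would verify that $O_H$ is a well-defined, consistent, $\F$-avoiding orientation of~$S$. Well-definedness relies on the observation that $V(H)\cap(A\cap B)$ has at most two vertices, and these must be consecutive on~$H$ (otherwise $H$ would not be induced), so deleting them from~$H$ leaves a connected subpath that must lie on one side of the separation, forcing $V(H)\sub A$ or $V(H)\sub B$. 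Consistency is immediate because $(A,B)\le(C,D)$ and $V(H)\sub D$ give $V(H)\sub D\sub B$. Finally, $\F$-avoidance holds since any $\sigma\sub O_H$ with $\sigma\in\F$ would place $V(H)$ inside the clique $\bigcap_{(A_i,B_i)\in\sigma}B_i$, impossible for an induced cycle of length at least~$4$.

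\textbf{The hard direction.} For $(\mathrm{i})\Rightarrow(\mathrm{iii})$, let $O$ be an $\F^*$-tangle and set $K:=\bigcap_{(A,B)\in O}B$. If I can show that $G[K]$ contains a hole, then any such~$H$ satisfies $V(H)\sub K\sub B$ for every $(A,B)\in O$, so $O\sub O_H$; since both are orientations, $O=O_H$ as required. Thus the whole task reduces to producing a hole inside~$G[K]$.

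\textbf{Finding the hole.} My central tool will be the \emph{clique-neighbour property}: for every $v\in V\setminus K$, the set $N(v)\cap K$ is a clique in~$G$. To see this, pick $(A,B)\in O$ with $v\in A\setminus B$ (exists because $v\notin K$); then any $u\in N(v)\cap K\sub B$ adjacent to~$v\in A\setminus B$ must, by the separation axiom, lie in $A\cap B$, a clique. I then argue by contradiction, assuming $G[K]$ is chordal. If $G[K]$ is itself a clique, consider the set $O^{\max}$ of maximal elements of~$O$: by consistency, $\bigcap_{(A,B)\in O^{\max}}B=K$, and for any crossing pair $\vs,\vt\in O^{\max}$ the join $\vs\vee\vt$ must fail to lie in~$\vS$ (else it would be in~$O$ by consistency, contradicting maximality), so Lemma~\ref{clique sep submodular} forces the meet and the two mixed corners into $\vS\cap O$; an iterative substitution of crossing pairs by their star-compatible mixed corners then produces a star $\tau\sub O$ with $\bigcap_\tau B$ still a clique, contradicting the $\F^*$-avoidance of~$O$. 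If instead $G[K]$ is chordal but not a clique, Dirac's theorem supplies a clique $C'\sub K$ whose removal disconnects~$G[K]$; the clique-neighbour property confines the $K$-neighbourhood of every $v\in V\setminus K$ to $C'$ together with a single component of $G[K]\setminus C'$, which lets me locate a clique separation $(A^*,B^*)\in\vS$ whose two sides each meet~$K$, so neither orientation of it can have $K\sub B$, contradicting that $O$ is an orientation of~$S$.

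\textbf{Main obstacle.} The technical heart is the star-refinement of~$O^{\max}$ when $G[K]$ is a clique. The naive replacement of a crossing pair $\vs,\vt$ by a mixed corner $\vs\wedge\tv$ enlarges the relevant $B$-part from $B$ to $B\cup C$ (writing $\vt=(C,D)$), so one must check that the newly introduced vertices, which lie inside the separator clique of~$\vt$, do not destroy the clique structure of $\bigcap_\tau B$; this is exactly where the clique-neighbour property re-enters, ensuring that those extra vertices remain pairwise adjacent to the rest of the intersection. The termination of the refinement, by induction on the number of crossings in~$O^{\max}$, together with bookkeeping to stay inside~$O$, is the most delicate part of the argument; the Dirac case is structurally cleaner but rests on essentially the same clique-neighbour lemma to rule out external bridges through $V\setminus K$.
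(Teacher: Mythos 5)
Your $(\mathrm{iii})\Rightarrow(\mathrm{ii})\Rightarrow(\mathrm{i})$ is sound, but both branches of your $(\mathrm{i})\Rightarrow(\mathrm{iii})$ have genuine gaps. In the clique case you assert that a crossing pair $\vs,\vt\in O^{\max}$ with $\vs\vee\vt\in\vS$ would, ``by consistency'', have $\vs\vee\vt\in O$. This is false: consistency (that $\vr\le\vs\in O$ with $r\ne s$ forces $\rv\notin O$) gives no information whatsoever about which orientation of $s\vee t$ lies in~$O$. You are tacitly invoking the profile property, which is strictly stronger and has to be \emph{proved} --- the paper does so in Lemma~\ref{clique sep tangle star} via an explicit case analysis (on whether $C\cap D\sub A$ or $C\cap D\sub B$) that exhibits a concrete star of~$\F^*$ inside the orientation. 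Your subsequent uncrossing of $O^{\max}$ into a star, which you concede is delicate, is likewise exactly the second half of Lemma~\ref{clique sep tangle star}, where one replaces $(A,B)$ by $(A,B)\wedge(D,C)$ and verifies both that $J(\cdot)$ is preserved and that the number of crossings strictly drops via Lemma~\ref{fish lemma}; your sketch presupposes rather than supplies this.

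In the chordal-but-not-clique case the clique-neighbour property is true but insufficient. It constrains edges between $V\setminus K$ and~$K$, not edges inside $V\setminus K$, so an $a$-$b$-path in $G-C'$ can leave~$K$ adjacent to the component~$K_1$, travel entirely through $V\setminus K$, and re-enter~$K$ in a different component~$K_2$; nothing in the clique-neighbour property rules this out, and hence nothing yet tells you that $C'$ separates $a$ from~$b$ in~$G$ or that the separation $(A^*,B^*)$ you want actually has $K\not\sub B^*$ and $K\not\sub A^*$. The paper closes exactly this gap with Lemma~\ref{clique separator transitive}, proved by induction over the clique separations $(X,Y)\in O$: an induced $a$-$b$-path with $a,b\in Y$ meets the clique $X\cap Y$ in at most two consecutive vertices and so never enters $X\setminus Y$, whence inductively the path is confined to $J(O)=K$, where $C'$ does separate. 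That global induction is the step you need and cannot be derived from the per-vertex clique-neighbour observation alone. Once it is in place, $C'$ separates $a$ from~$b$ in all of~$G$, one gets a clique separation of~$G$ that $O$ cannot orient compatibly with $a,b\in K$, and the contradiction follows as you intend.
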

     
	It is easy to see that every orientation~$O_H$ induced by a hole~$H$ is an \mbox{$\F$-tangle}. To prove that, conversely, every \mbox{$\F$-tangle} is induced by a hole, we use Theorem~\ref{dirac chordal} and an easy observation about clique-separators, Lemma~\ref{clique separator transitive} below. The proof that every \mbox{$\F^*$-tangle} is already an \mbox{$\F$-tangle}, the main content of Lemma~\ref{clique sep tangle star} below, is similar to the proof of Lemma~\ref{tangle stars}, but some care is needed to keep track of the separators of two crossing separations.
          
     For a set $\tau \sub {\vU}$, let $J(\tau) := \bigcap_{(A,B) \in \tau} B$ be the intersection of all the right sides of separations in~$\tau$, where $J(\emptyset) := V(G)$. 
     
     \begin{lemma} \label{clique separator transitive}
     	Let~$\tau$ be a set of clique separations, $J = J(\tau)$ and $K \sub J$. Let $a, b \in J \setminus K$. If~$K$ separates~$a$ and~$b$ in $G[J]$, then it separates them in~$G$.
     \end{lemma}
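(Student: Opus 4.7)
The plan is to argue by contradiction: assume there is an $a$-$b$-path $P$ in $G$ avoiding $K$, and modify $P$ one clique separator at a time until it lies entirely inside $G[J]$, contradicting the hypothesis that $K$ separates $a$ from $b$ in $G[J]$.

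The key local move is the following. Fix $(A,B) \in \tau$, write $P = v_0, v_1, \ldots, v_m$ with $v_0 = a$ and $v_m = b$, and consider any maximal consecutive subsequence $v_p, \ldots, v_q$ of $P$ lying outside $B$. Since $a, b \in J \subseteq B$, we have $1 \leq p \leq q \leq m-1$, so the two boundary neighbours $v_{p-1}, v_{q+1}$ on $P$ lie in $B$. Because $v_p \in A \setminus B$ and there are no edges between $A \setminus B$ and $B \setminus A$, the edge $v_{p-1}v_p$ forces $v_{p-1} \in A \cap B$; similarly $v_{q+1} \in A \cap B$. As $G[A \cap B]$ is a clique, either $v_{p-1} = v_{q+1}$ or $v_{p-1}v_{q+1} \in E(G)$, so we may shortcut the subwalk $v_{p-1}, v_p, \ldots, v_q, v_{q+1}$ to the single edge (or vertex) $v_{p-1}v_{q+1}$. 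The resulting walk from $a$ to $b$ has strictly smaller vertex set (the vertices $v_p, \ldots, v_q$ are dropped) and, because $K \subseteq J \subseteq B$ means no dropped vertex lay in $K$, still avoids $K$. Iterating over all excursions out of $B$ and then extracting an $a$-$b$-path from the resulting walk yields an $a$-$b$-path in $G[B]$ avoiding $K$, whose vertex set is contained in $V(P)$.

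Applying this move successively to each $(A_j, B_j) \in \tau$, and noting that each step only deletes vertices from the current path so that prior containment in any $G[B_{j'}]$ is preserved, we ultimately obtain an $a$-$b$-path in $G\bigl[\bigcap_j B_j\bigr] = G[J]$ avoiding $K$, contradicting the hypothesis. The entire argument is routine path surgery; the only real content lies in the observation that both boundary neighbours of any excursion out of $B$ must land in the clique $A \cap B$, which is where the clique-separation hypothesis is used.
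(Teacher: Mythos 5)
Your proof is correct and rests on essentially the same key observation as the paper's: a path with both endpoints in $B$ can be pushed into $G[B]$ by shortcutting each excursion through the clique separator $A\cap B$, and iterating this over the members of $\tau$ (the vertex set only shrinks, so earlier containments persist) lands the path in $G[J]$. The paper packages this slightly differently — it inducts on $|\tau|$ and works with an \emph{induced} $a$--$b$-path $P$ in $G[J']$, from which it follows at once that $P$ meets the clique $X'\cap Y'$ in at most two consecutive vertices and therefore never leaves $Y'$, so the shortcut is implicit rather than performed explicitly — but the two formulations are interchangeable.
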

     
     \begin{proof}
     We prove this by induction on~$|\tau|$, the case $\tau = \emptyset$ being trivial. Suppose now $|\tau| \geq 1$ and let $(X,Y) \in \tau$ arbitrary. Put $\tau' := \tau \setminus \{ (X,Y) \}$ and $J' := J(\tau')$. Note that $J = J' \cap Y$. Let $G' := G[J']$ and $(X', Y') := (X \cap J', Y \cap J')$. 
     
     Then $K \sub J'$ and $a, b \in J' \setminus K$. Suppose~$K$ did not separate~$a$ and~$b$ in~$G'$ and let $P \sub J'$ be an induced $a$-$b$-path avoiding~$K$. Since $G'[ X' \cap Y']$ is a clique, $P$ has at most two vertices in $X' \cap Y'$ and they are consecutive vertices along~$P$. As $a, b \in Y'$ and $(X',Y')$ is a separation of~$G'$, it follows that $P \sub Y'$. But then~$K$ does not separate~$a$ and~$b$ in $J = J' \cap Y$, contrary to our assumption.
     
     Hence~$K$ separates~$a$ and~$b$ in~$G'$. By inductive hypothesis applied to~$\tau'$, it follows that~$K$ separates~$a$ and~$b$ in~$G$.
     \end{proof}
		
		\begin{lemma} \label{clique sep tangle star}
			Every $\F^*$-tangle is an $\F$-tangle and a regular profile.
		\end{lemma}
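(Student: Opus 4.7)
The plan is to mirror the strategy of Lemma~\ref{tangle stars}, with additional care for the separator structure of clique separations.

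I begin with the two auxiliary observations. For regularity: for every clique $A\sub V$, the singleton $\{(V,A)\}$ is a star in $\F$ (its right side $A$ induces a clique by hypothesis), so the $\F^*$-tangle condition forces $(A,V)\in O$, placing every small clique separation in $O$. For the profile property: any $\Po$-triple $\{\vr,\vs,(\vr\vee\vs)^*\}$ contained in $O$ has all three elements in $\vS$, hence the separator of $(\vr\vee\vs)^*$---which coincides with $J(\{\vr,\vs,(\vr\vee\vs)^*\})$---is a clique, placing the triple in $\F$. So once $\F$-avoidance is established, profile-avoidance follows automatically.

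For $\F$-avoidance itself, I would suppose for contradiction that some $\sigma\sub O$ lies in $\F$, and choose $\sigma$ first minimizing $d(\sigma)$ (the number of crossing pairs) and then inclusion-minimal among such $\sigma$. A comparable pair $\vs\le\vt$ in $\sigma$ would contradict inclusion-minimality, since $J(\sigma\setminus\{\vs\})=J(\sigma)$ (the right side of $\vs$ contains that of $\vt$); so $\sigma$ is an antichain, and consistency of $O$ forces every nested pair to be star-nested. If no pair crosses then $\sigma\in\F^*$, contradicting $\F^*$-avoidance; hence some $\vs=(A,B),\vt=(C,D)\in\sigma$ cross.

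I would then run the case analysis from the proof of Lemma~\ref{clique sep submodular}: the cliques $A\cap B$ and $C\cap D$ each lie on one side of the other separation, giving four sub-cases. Whenever $C\cap D\sub B$, the corner $\vs\wedge\tv$ lies in $\vS$ and, on replacing $\vs$ in $\sigma$, contributes only vertices of $C\cap D\cap J_{\text{other}}\sub B$ to the intersection, leaving $J(\sigma)$ unchanged; the analogous statement holds for $\vt\leftrightarrow\sv\wedge\vt$ whenever $A\cap B\sub D$. In three of the four sub-cases one of these containments holds, the replacement preserves $J(\sigma)$ and membership in $\F$, and Lemma~\ref{fish lemma} gives $d(\sigma')<d(\sigma)$---contradicting minimality.

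The main obstacle is the remaining sub-case $A\cap B\sub C$ and $C\cap D\sub A$, in which neither single-corner replacement controls $J$. Here the bad corner is $\vs\wedge\vt$, so $\sv\wedge\tv$ (and hence $\vs\vee\vt$) lies in $\vS$. If $\vs\vee\vt\in O$, replacing $\{\vs,\vt\}$ by the singleton $\{\vs\vee\vt\}$ preserves $J(\sigma)=(B\cap D)\cap J_{\text{other}}$ and strictly decreases $|\sigma|$, contradicting inclusion-minimality. Otherwise $\sv\wedge\tv\in O$, so all three corners $\vs\wedge\tv,\sv\wedge\vt,\sv\wedge\tv$ lie in $O$ and the set $\sigma'=(\sigma\setminus\{\vs,\vt\})\cup\{\sv\wedge\tv\}$ is contained in $O$ with $|\sigma'|<|\sigma|$. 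The delicate part of the proof is verifying that either $J(\sigma')=(A\cup C)\cap J_{\text{other}}$ itself induces a clique---in which case $\sigma'\in\F$ directly contradicts minimality of $d(\sigma)$ by Lemma~\ref{fish lemma}---or else a non-edge within $J(\sigma')$ yields, via Lemma~\ref{clique separator transitive}, a strictly smaller crossing to which the earlier three sub-cases apply, closing the induction.
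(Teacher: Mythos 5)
Your two auxiliary observations are correct (regularity from the singletons $\{(V,A)\}\in\F^*$, and the fact that any $\Po$-triple inside $O$ lies in $\F$ because its $J$ is the separator of $(\vr\vee\vs)^*\in\vS$), but inverting the paper's order of attack---proving $\F$-avoidance first and extracting the profile property afterwards---leaves a genuine gap at exactly the point you flag. In your remaining sub-case $A\cap B\sub C$, $C\cap D\sub A$ with $\sv\wedge\tv\in O$, replacing $\{\vs,\vt\}$ by $\{\sv\wedge\tv\}$ changes the contribution of this pair to $J$ from $B\cap D$ to $A\cup C$; the resulting $J(\sigma')$ need not be a clique, need not contain $J(\sigma)$, and in general bears no useful relation to it, so $\sigma'$ need not lie in $\F$ and no choice of minimality rescues the step. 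The appeal to Lemma~\ref{clique separator transitive} is only a gesture: your proof is a minimal-counterexample argument, not an induction, so there is no parameter for a ``strictly smaller crossing'' to be small with respect to, and you give no argument that a non-edge of $(A\cup C)\cap J_{\text{other}}$ produces a set of $\F$ inside $O$.

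That sub-case is in fact impossible, but showing this is precisely the paper's direct proof that $P$ is a profile, which the paper carries out \emph{before} the uncrossing. With $(A,B),(C,D),(B\cap D,A\cup C)\in O$ and $C\cap D\sub A$, the corner $(X,Y):=(B,A)\wedge(C,D)=(B\cap C,A\cup D)$ has separator contained in $A\cap B$, hence lies in $\vS$, and lies in $O$ by consistency; the set $\{(A,B),\,(B\cap D,A\cup C),\,(X,Y)\}$ is then a star with $J=B\cap(A\cup C)\cap(A\cup D)=A\cap B$, a clique, so it is an element of $\F^*$ contained in $O$---contradicting $\F^*$-avoidance outright. Establishing the profile property first by this construction (and its mirror for $C\cap D\sub B$) is what makes the uncrossing work: whenever $\vs\vee\vt\in\vS$ the profile property guarantees $\vs\vee\vt\in O$, so the clean replacement of $\{\vs,\vt\}$ by $\{\vs\vee\vt\}$ is always available, and the one remaining case $\vs\vee\vt\notin\vS$ forces $C\cap D\sub B$, where your good-corner replacement does preserve~$J$.
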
     
		
		\begin{proof}
			Let~$P$ be an $\F^*$-tangle. It is clear that~$P$ contains no co-small separation, since $\{ (V,A) \} \in \F^*$ for every co-small $(V,A) \in \vS$. Since~$P$ is consistent, it follows that~$P$ is in fact down-closed. 
			
			We now show that~$P$ is a profile. Let $(A,B), (C,D) \in P$ and assume for a contradiction that $(E,F) := ((A,B) \vee (C,D))^* \in P$. Recall that either $C \cap D \subseteq A$ or $C \cap D \subseteq B$. 
	
			\begin{figure}[ht!]
				\begin{center}
				\resizebox{4cm}{!}{	\begin{tikzpicture}

\draw[pattern=north west lines, pattern color=gray] (0,0) ellipse (5cm and 1cm);
\fill[white] (0,0) ellipse (1cm and 5cm);
\fill[white] (0,-5) rectangle (5,5);
\draw[thick, green] (0,0) [partial ellipse=279.5:360:4.8cm and .8cm];
\draw[thick, green] (0,0) [partial ellipse=0:99.5:4.8cm and .8cm];
\draw[thick, green] (0,0) [partial ellipse=170.5:350.5:.8cm and 4.8cm];
\draw[red,thick] (0,0) ellipse (5cm and 1cm);
\draw[blue,thick] (0,0) ellipse (1cm and 5cm);
\node[text= red, scale=2] at (5.5,1) {$C$};
\node[text= red, scale=2]  at (5.5,-1) {$D$};
\node[text= blue, scale=2]  at (1,5.5) {$B$};
\node[text= blue, scale=2]  at (-1,5.5) {$A$};
\node[text= green, scale=2]  at (1,-5.5) {$E$};
\node[text= green, scale=2]  at (-1,-5.5) {$F$};
\node[scale=2] at (-2.5,0) {$\emptyset$};

\end{tikzpicture}}
				\end{center}\vskip-12pt
				\caption{The case $C \cap D \sub B$}
				\label{fig: hole profile}
			\end{figure}

			Suppose first that $C \cap D \sub B$; this case is depicted in Figure~\ref{fig: hole profile}. Let $(X,Y) := (A,B) \wedge (D,C)$ and note that $X \cap Y \sub A \cap B$, so that $(X,Y) \in {\vS}$. It follows from the consistency of~$P$ that $(X,Y) \in P$. Let $\tau := \{ (C,D), (E,F), (X,Y) \}$ and observe that $\tau \sub P$ is a star. However 	
			\[
			J( \tau ) = D  \cap (A \cup C) \cap (B \cup C) = (D \cap B) \cap (A \cup C) ,
			\]
			which is the separator of $(E,F)$. Since $(E,F) \in {\vS}$, $G[J(\tau)]$ is a clique, thereby contradicting the fact that~$P$ is an $\F^*$-tangle.
			
			Suppose now that $C \cap D \sub A$. Let $(X,Y) := (B,A) \wedge (C,D)$ and note that $X \cap Y \sub A \cap B$, so that $(X,Y) \in {\vS}$. Since~$P$ is down-closed, it follows that $(X,Y) \in P$. Therefore $\tau := \{ (A,B), (E,F), (X,Y) \} \sub P$. But~$\tau$ is a star and
			\[
			J( \tau ) = B  \cap (A \cup C) \cap (A \cup D) = B \cap (A \cup (C \cap D) )= B \cap A,
			\]
			and so $G[J(\tau)]$ is a clique, which again contradicts our assumption that~$P$ is an $\F^*$-tangle. This contradiction shows that~$P$ is indeed a profile.

	We now prove that for any $\tau \sub P$ there exists a star $\sigma \sub P$ with $J(\sigma) = J(\tau)$. It follows then, in particular, that~$P$ is an $\F$-tangle.
			
			Given $\tau \sub P$, choose $\sigma \sub P$ with $J(\sigma) = J(\tau)$ so that $d(\sigma)$, the number of crossing pairs of elements of~$\sigma$, is minimum and, subject to this, $\sigma$ is inclusion-minimal. Then~$\sigma$ is an antichain: If $(A,B) \leq (C,D)$ and both $(A,B), (C,D) \in \sigma$, then $\sigma ' := \sigma \setminus \{ (A,B) \}$ satisfies $J(\sigma') = J(\sigma)$, thus violating the minimality of~$\sigma$. Since $\sigma \sub P$ and~$P$ is consistent, no two elements of~$\sigma$ point away from each other. Therefore, any two nested elements of~$\sigma$ point towards each other. To verify that~$\sigma$ is a star, it suffices to check that~$\sigma$ is nested. 
			
			Assume for a contradiction that~$\sigma$ contained two crossing separations $(A,B)$ and $(C,D)$. If $(E,F) := (A,B) \vee (C,D) \in {\vS}$, obtain~$\sigma'$ from~$\sigma$ by deleting $(A,B)$ and $(C,D)$ and adding $(E,F)$. We have seen above that~$P$ is a profile, so $\sigma ' \sub P$. By Lemma~\ref{fish lemma}, every element of $\sigma \setminus \{ (A,B), (C,D) \}$ that is nested with both $(A,B)$ and $(C,D)$ is also nested with $(E,F)$. Since~$\sigma'$ misses the crossing pair $\{ (A,B), (C,D) \}$, it follows that $d(\sigma ') < d(\sigma)$. But $J(\sigma') = J(\sigma)$, contradicting the minimality of~$\sigma$.
			
			Hence it must be that $(E,F) \notin {\vS}$, so $A \cap B \not \sub C$ and $C \cap D \not \sub A$. Therefore $(X,Y) := (A,B) \wedge (D,C) \in {\vS}$. Let $\sigma ' := (\sigma \setminus \{ (A,B) \} ) \cup \{ (X,Y) \}$. Note that $(X,Y) \leq (A,B) \in P$, so $\sigma ' \sub P$. Moreover $Y \cap D = (B \cup C) \cap D = B \cap D$, since $C \cap D \sub B$. Therefore $J(\sigma ') = J(\sigma)$. As mentioned above, any $(U,W) \in \sigma \setminus \{ (A,B) \}$ that is nested with $(A,B)$ satisfies $(A,B) \leq (W,U)$. Therefore $(X,Y) \leq (A,B) \leq (W,U)$, so $(X,Y)$ is also nested with $(U,W)$. It follows that $d( \sigma ') < d(\sigma)$, which is a contradiction. This completes the proof that~$\sigma$ is nested and therefore a star.	
		\end{proof}
     
     \goodbreak
     
     	\begin{proof}[Proof of Theorem~\ref{holes as tangles}]
		(i) $\rightarrow$ (ii): See Lemma~\ref{clique sep tangle star}.
		
		(ii) $\rightarrow$ (iii): Let~$O$ be an $\F$-tangle and $J := J(O)$. We claim that there is a hole~$H$ of~$G$ with $H \sub J$. Such a hole then trivially satisfies $O_H = O$.
		
		Assume there was no such hole, so that $G[J]$ is a chordal graph. Since~$O$ is $\F$-avoiding, $G[J]$ itself cannot be a clique, so there exists a minimal set $K \sub J$ separating two vertices $a, b \in J \setminus K$ in $G[J]$. By Theorem~\ref{dirac chordal}, $K$ induces a clique in~$G$. By Lemma~\ref{clique separator transitive}, $K$ separates~$a$ and~$b$ in~$G$, so there exists a separation $(A, B) \in {\vS}$ with $A \cap B = K$, $a \in A \setminus B$ and $b \in B \setminus A$. As~$O$ orients~${\vS}$, it must contain one of $(A,B), (B,A)$, say without loss of generality $(A,B) \in O$. But then $J \sub B$, contrary to $a \in J$. This proves our claim.
		
		(iii) $\rightarrow$ (i): We have $H \sub J(O_H)$, so $J(O)$ does not induce a clique. Since every star $\sigma \subseteq O$ has $J(O) \subseteq J(\sigma)$ there is no star $\sigma \subseteq O$ such that $G[J(\sigma)]$ is a clique, and so $O$ is $\mathcal{F}^*$-avoiding. Furthermore, $O_H$ is clearly consistent, and so $O$ is an $\mathcal{F}^*$-tangle.
\end{proof}

	The upshot of Theorem~\ref{holes as tangles} is that a hole in a graph, although a very concrete substructure, can be regarded as a tangle. This is in line with our general narrative, set forth e.g.\ in~\cite{TangleTreeGraphsMatroids,TanglesEmpirical,TanglesSocial}, that tangles arise naturally in very different contexts, and underlines the expressive strength of abstract separation systems and tangles. 
	
	What does our abstract theory then tell us about the holes in a graph? The results we will derive are well-known and not particularly deep, but it is none\-theless remarkable that the theory of abstract separation systems, emanating from the theory of highly connected substructures of a graph or matroid, is able to express such natural facts about holes.
     
	Firstly, by Lemma~\ref{clique sep tangle star}, every hole induces a profile of~${S}$. Hence Theorem~\ref{tangle tree reg prof} applies and yields a nested set~$\N$ of clique-separations distinguishing all holes which can be separated by a clique. This is similar to, but not the same as, the decomposition by clique separators of Tarjan~\cite{tarjan1985}: the algorithm in~\cite{tarjan1985} essentially produces a maximal nested set of clique separations and leaves `atoms' that do not have any clique separations, whereas our tree set merely distinguishes the holes and leaves larger pieces that might allow further decomposition.
	
	Secondly, we can apply Theorem~\ref{submodular duality} to find the structure dual to the existence of holes. It is clear that~$\F^*$ is standard, since~$\F^*$ contains $\{ (V,A) \}$ for every $(V,A) \in {\vS}$.

     \begin{lemma} \label{clique sep closed}
     $\F^*$ is closed under shifting.
     \end{lemma}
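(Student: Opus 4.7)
The plan is to unfold the definition and then reduce the whole claim to showing that a certain explicit set of vertices induces a clique in~$G$. Fix a non-trivial, non-degenerate $\vr \in \vS$ with $\{\rv\} \notin \F^*$, a separation $\vs \in \vS$ emulating $\vr$ in~$\vS$, a star $\sigma \in \F^*$ with $\sigma \sub \vS \setminus \{\rv\}$, and $\vx \in \sigma$ with $\vx \geq \vr$. The task is to show that $\sigma\!_{\vx}^{\vs} \in \F^*$. By Lemma~\ref{shift is star} we already know that $\sigma\!_{\vx}^{\vs}$ is a star contained in~$\vS$, so the heart of the matter is to verify that the intersection $J$ of the right sides of the elements of $\sigma\!_{\vx}^{\vs}$ induces a clique in~$G$.

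Write $\vx = (A,B)$, $\vs = (C,D)$ and $\vy = (A_y,B_y)$ for $\vy \in \sigma \setminus \{\vx\}$. Using the identity $\bigcap_y(B_y \cup C) = (\bigcap_y B_y) \cup C$, I would first compute
\[
J \;=\; (B \cap C \cap D) \,\cup\, (D \cap K),
\]
where $K := \bigcap_{\vy \in \sigma} B_y$. Since $\sigma \in \F^*$, the set $K$ induces a clique, and since $\vs \in \vS$ so does $C \cap D$; hence both pieces of~$J$ already induce cliques, and it remains to show that vertices from different pieces are adjacent in~$G$.

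So take $u \in D \cap K$ with $u \notin C$ and $v \in B \cap C \cap D$. The key observation is the inclusion $A \cap B \sub K$: we have $A \cap B \sub B = B_{\vx}$, and the star condition $\vy \leq \xv$ for $\vy \in \sigma \setminus \{\vx\}$ yields $A \sub B_y$. Since $u \in K$ and $K$ is a clique, $u$ is adjacent to every vertex of $A \cap B$, so the separator $(A \cap B) \cup \{u\}$ of the separation $\vt := (A \cup \{u\}, B)$ is a clique; in particular $\vt \in \vS$. A short check confirms $\vt \geq \vr$ and $\vt \neq \rv$ (the latter because otherwise $B \sub A$, which together with $A \cup B = V$ would force $\vx$ to be degenerate, contradicting our assumption on~$\vS$).

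Applying the hypothesis that $\vs$ emulates $\vr$ to this $\vt$, we obtain $\vs \vee \vt \in \vS$, so its separator $L_t = (A \cup C \cup \{u\}) \cap B \cap D$ is a clique of $G$. Since $u \in L_t$ and $v \in C \cap B \cap D \sub L_t$, this forces $uv \in E(G)$, finishing the proof that $J$ is a clique. The main hurdle to spot is the inclusion $A \cap B \sub K$, which is what makes the auxiliary separation $\vt = (A \cup \{u\}, B)$ lie in~$\vS$; once $\vt$ is in hand, emulation delivers the required adjacency essentially automatically.
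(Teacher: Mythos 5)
Your proof takes a genuinely different route from the paper's. The paper builds a single auxiliary clique separation, namely $(B,B_0)$ in the paper's own notation, whose separator is exactly $J(\sigma)$; it then applies emulation just once to obtain $(X,Y)\vee(B,B_0)\in\vS$ and observes that the separator of this one shifted separation is already all of $J(\sigma\!_{{\vx}}^{{\vs}})$. You instead decompose $J(\sigma\!_{{\vx}}^{{\vs}})$ into the two cliques $B\cap C\cap D$ and $D\cap K$ and verify the cross-adjacencies vertex by vertex, constructing for each $u\in(D\cap K)\setminus C$ an auxiliary separation $\vt=(A\cup\{u\},B)$ and applying emulation to each such~$\vt$. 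This is more local and needs one emulation per vertex, whereas the paper's version is slicker; but your key observation $A\cap B\sub K$, which is what puts $\vt$ in~$\vS$, is exactly the right kind of insight (it plays the same role as the paper's observation that $(B,B_0)$ has separator $J(\sigma)$), and the computation of $J$ and the final adjacency argument via the separator of $\vs\vee\vt$ are correct.

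However, your stated reason for $\vt\ne\rv$ is wrong. From $\vt=\rv$ you correctly derive $B=U\sub A$ and hence $A=V$, but this does not make $\vx=(V,B)$ degenerate: degeneracy of $\vx$ would require $B=V$ as well, which does not follow. The correct argument uses the hypothesis $\{\rv\}\notin\F^*$, which you state at the outset but never invoke: if $\vt=\rv$ then $A=V$ and $B=U$, so the separator of $\vx=(V,B)\in\vS$ is $B=U$, hence $G[U]$ is a clique; since $\rv=(W,U)$ is non-degenerate and $J(\{\rv\})=U$, this would put $\{\rv\}$ in~$\F^*$, a contradiction. (The paper's proof needs an analogous check that $(B,B_0)\ne(W,U)$ and leaves it implicit, so the issue is not peculiar to your route; but the justification you give is incorrect as written and should be replaced by the one above.) With that repaired, your proof goes through.
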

     
     \begin{proof}
     Let $(X,Y) \in {\vS}$ emulate a non-trivial $(U,W) \in {\vS}$ with $\{ (W,U) \} \notin \F^*$, let $\sigma = \{ (A_i, B_i) \colon 0 \leq i \leq n \} \sub {\vS}$ with $\sigma \in \F^*$ and $(U,W) \leq (A_0, B_0)$. Then
          \[
   \sigma ' :=  \sigma_{(A_0,B_0)}^{(X,Y)} = \{ (A_0 \cup X, B_0 \cap Y) \} \cup \{ (A_i \cap Y, B_i \cup X) \colon 1 \leq i \leq n \} .
     \]
     By Lemma~\ref{shift is star}, $\sigma' \sub {\vS}$ is a star. We need to show that $G[J(\sigma')]$ is a clique.%
       \COMMENT{}
     
     Let $(A,B) := \bigvee_{i \geq 1} (A_i, B_i)$ and note that $(A,B) \leq (B_0, A_0)$, since~$\sigma$ is a star. Then
     \[
     (B,A) \wedge (V, B_0) = (B, B_0) \in {\vU} .
     \]
     But $G[B \cap B_0] = G[J(\sigma)]$ is a clique, so in fact $(B, B_0) \in {\vS}$. Since $(U,W) \leq (A_0,B_0) \leq (B,A)$, we see that $(U,W) \leq (B,B_0)$. As $(X,Y)$ emulates $(U,W)$ in~${\vS}$, we find that $(E,F) := (X,Y) \vee (B,B_0) \in {\vS}$. It thus follows that
     \[
     J( \sigma ' ) = (X \cup B) \cap (Y \cap B_0) = E \cap F
     \]
    and so $G[J(\sigma')]$ is indeed a clique. Therefore $\sigma ' \in \F^*$.
     \end{proof}
     
	\begin{theorem} \label{duality for holes}
		Let~$G$ be a graph. Then the following are equivalent:
			\begin{enumerate}[\rm (i)]\itemsep=0pt\vskip-\smallskipamount\vskip0pt
				\item $G$ has a tree-decomposition in which every part is a clique.
				\item There exists an ${S}$-tree over~$\F^*$.
				\item $S$ has no $\F^*$-tangle.
				\item $G$ is chordal.
			\end{enumerate}
	\end{theorem}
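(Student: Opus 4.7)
The plan is to establish the four equivalences by combining the submodular duality theorem (for (ii)~$\Leftrightarrow$~(iii)), the characterization of $\F^*$-tangles as induced by holes (for (iii)~$\Leftrightarrow$~(iv)), and the classical Gavril/Buneman fact that chordal graphs are precisely those admitting a tree-decomposition into cliques (for (i)~$\Leftrightarrow$~(iv)). Together these close up all four conditions.

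For (ii)~$\Leftrightarrow$~(iii), I would simply invoke Theorem~\ref{submodular duality} with $\F=\F^*$, whose three hypotheses are all in place: submodularity of $S$ is Lemma~\ref{clique sep submodular}, $\F^*$ is closed under shifting by Lemma~\ref{clique sep closed}, and $\F^*$ is standard for $S$ because every $(V,A)\in S$ has $A$ a clique and hence $\{(V,A)\}\in\F^*$, which covers all trivial separations in particular. Step (iii)~$\Leftrightarrow$~(iv) is then immediate from Theorem~\ref{holes as tangles}: $S$ has an $\F^*$-tangle if and only if some hole-induced orientation $O_H$ exists, if and only if $G$ contains a hole.

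For (i)~$\Leftrightarrow$~(iv), the implication (i)~$\Rightarrow$~(iv) is a quick observation: any cycle of length at least four in a clique-tree-decomposition must have three of its vertices meeting in a common bag, by the connectedness condition, which yields a chord. The reverse (iv)~$\Rightarrow$~(i) I would prove through the chain (iv)~$\Rightarrow$~(iii)~$\Rightarrow$~(ii) just established: from an $S$-tree $(T,\alpha)$ over $\F^*$ one reads off a tree-decomposition of $G$ by letting the part at $t\in T$ be $V_t := J(\alpha(F_t))$, which is a clique by the definition of $\F^*$.

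The only point requiring genuine care -- and what I expect to be the main obstacle -- is verifying that these $V_t$ actually form a tree-decomposition of $G$: that they cover every vertex and every edge of $G$ and satisfy the connectedness condition along $T$. This is the standard translation between $S$-trees over a universe of graph separations and tree-decompositions, and it follows from the star property of each $\alpha(F_t)$ together with the fact that the separations $\alpha(s,t)$ split $V(G)$ consistently with the edges of $T$. It is routine bookkeeping rather than a new idea, but it is the sole step whose content is not packaged directly in one of the preceding lemmas or theorems.
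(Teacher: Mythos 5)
Your proof is correct, and it differs from the paper's in one structural choice worth noting. The paper proves (i)~$\Leftrightarrow$~(ii) directly in both directions: for (i)~$\Rightarrow$~(ii) it explicitly builds the $S$-tree $(T,\alpha)$ from a clique tree-decomposition $(T,\mathcal{V})$ by setting $\alpha(s,t):=(V_{t,s},V_{s,t})$ and observing that its separators are intersections of adjacent bags, hence cliques, and that $J(\alpha(F_t))=V_t$. You replace that explicit construction with a direct proof of (i)~$\Rightarrow$~(iv) via the classical chord argument (if every bag is a clique, a chordless cycle of length $\geq 4$ would have at most two of its vertices per bag, so the induced tree-decomposition of the cycle would have width $\leq 1$, contradiction), and then close the loop (iv)~$\Rightarrow$~(iii)~$\Rightarrow$~(ii)~$\Rightarrow$~(i) using the duality theorem, the holes theorem, and the same $V_t:=J(\alpha(F_t))$ translation for the final step. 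Both approaches need that last translation and both invoke Theorem~\ref{submodular duality} and Theorem~\ref{holes as tangles} in the same way; the net difference is that the paper's route constructs an $S$-tree while yours relies on a standard fact about clique tree-decompositions. Each is about equally long; yours has the small aesthetic advantage of making the classical equivalence (i)~$\Leftrightarrow$~(iv) appear as a byproduct of the abstract duality theory rather than being reproved in disguise, while the paper's version makes the translation between tree-decompositions and $S$-trees bidirectional and explicit, which is perhaps more illuminating of how the abstract machinery captures the concrete graph-theoretic object. Your remark that verifying the tree-decomposition axioms for the $V_t$ is the only genuinely new bookkeeping matches the paper's stance, which likewise declares it ``easily verified.''
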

	
	\begin{proof}
	(i) $\rightarrow$ (ii): Let $(T, \V)$ be a tree-decomposition of~$G$ in which every part is a clique. For adjacent $s, t \in T$, let $T_{s,t}$ be the component of $T - st$ containing~$t$ and let~$V_{s,t}$ be the union of all~$V_u$ with $u \in T_{s,t}$. Define $\alpha : {\vec E}(T) \to {\vU}$ as $\alpha(s,t) := (V_{t,s}, V_{s,t})$. Then $\alpha(s,t) = \alpha(t,s)^*$. The separator of $\alpha(s,t)$ is $V_s \cap V_t$, which is a clique by assumption. Hence $(T, \alpha)$ is in fact an $S$-tree. It is easy to see that $\alpha(F_t)$ is a star for every $t \in T$ and that $J(\alpha(F_t))= V_t$. Therefore $(T, \alpha)$ is an $S$-tree over~$\F^*$.
	
	(ii) $\rightarrow$ (i): Given an $S$-tree $(T, \alpha)$ over~$\F^*$, define $V_t := J( \alpha(F_t))$ for $t \in T$. It is easily verified that $(T, \V)$ is a tree-decomposition of~$G$. Each~$V_t$ is then a clique, since $\alpha(F_t) \in \F$.
	
	(ii) $\leftrightarrow$ (iii): Follows from Theorem~\ref{submodular duality}, since~$\F^*$ is standard for~${\vS}$ and closed under shifting by Lemma~\ref{clique sep closed}.
	
	(iii) $\leftrightarrow$ (iv): Follows from Theorem~\ref{holes as tangles}.
	\end{proof}
     
     The equivalence of~(i) and~(iv) is a well-known characterization of chordal graphs that goes back to a theorem Gavril~\cite{gavril74} which identifies chordal graphs as the intersection graphs of subtrees of a tree.

		\end{subsection}     

     	\begin{subsection}{Tangle-tree duality in cluster analysis}\label{sec:cluster}

Let us now apply Theorem~\ref{submodular duality} to a generic scenario in cluster analysis~\cite{TanglesEmpirical,TanglesSocial}, where $V$ is thought of as a data set, $S$~is a set of certain `natural' bipartitions of~$V\!$, and we are interested in certain $\F$-tangles as `clusters'. The idea is that clusters should be described by these $\F$-tangles in the same way as the vertex set of a large grid in a graph is captured by the graph tangle~$\tau$ it induces: although every oriented separation $\vs$ in~$\tau$ points to most of the vertices of the grid, the cluster can be `fuzzy' in that these are not the same points for every $\vs\in\tau$. Indeed, there need not be a single vertex to which all the $\vs\in\tau$ point.

To mimic this idea, we want to choose $\F$ so that, whenever we consider just a few separations in~$S$, any $\F$-tangle $\tau$ of~$S$ must orient these so that they all point to at least some~$m$ (say) points in~$V\!$, while we do not require that the intersection of all the sets $B$ for $(A,B)\in\tau$ must be large (or even non-empty).

Formally, then, let $\vU$ be the universe of all oriented bipartitions~$(A,B)$ of some non-empty%
   \COMMENT{}
   set~$V\!$, including $(\emptyset,V)$ and~$(V,\emptyset)$, with $\sv=(B,A)$ for $\vs=(A,B)$ and $(A,B)\land (C,D):= (A\cap C, B\cup D)$,%
   \COMMENT{}
   and let ${\vS} \sub {\vU}$ be any submodular separation system in~$\vU$. Let $1\le m\in{\mathbb N}$ and $2\le n\in{\mathbb N}\cup\{\infty\}$.%
   \COMMENT{}
  For these $m$ and~$n$, define
$$\F_m := \big\{\,F\sub\vU : \big|\!\bigcap_{(A,B)\in F}\!\!\! B\>\big| < m\, \big\}$$
\vskip-.5\baselineskip\noindent
and
 $$\F^n_m := \{\,F\in\F_m : |F| < n\,\}.$$%
   \COMMENT{}%
   \COMMENT{}

There is only one small separation in ${\vU}$, the separation~$(\emptyset,V)$. Hence regardless of what~$S$ may be, it has no trivial separation other than~$(\emptyset,V)$.%
   \COMMENT{}
   Since ${\{(V,\emptyset)\}\in\F^n_m}$ for all $m$ and~$n$, this makes $\F^n_m$ standard for~$S$.

Recall that, for any $\F\sub 2^\vS\!$, we write $\F^*$ for the set of stars in~$\F$.

\begin{lemma}\label{Funcrosses}
Let $1\le m\in{\mathbb N}$ and $2\le n\in{\mathbb N}\cup\{\infty\}$. For $\F = \F^n_m$, the $\F^*$-tangles of $S$ are precisely its $\F$-tangles.
\end{lemma}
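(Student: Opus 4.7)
One direction is immediate since $\F^*\sub\F$: every $\F$-tangle avoids~$\F$ and hence~$\F^*$. For the converse I would mimic the proof of Lemma~\ref{tangle stars} (and Lemma~\ref{clique sep tangle star}). Assume $O$ is an $\F^*$-tangle that fails to avoid~$\F$, and pick $F\sub O$ with $F\in\F$ minimising first $d(F)$, the number of crossing pairs in~$F$, and then $|F|$. It suffices to show that $F$ must be a star, for then $F\in\F^*$ sits inside~$O$, contradicting the assumption that $O$ is $\F^*$-avoiding.

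A first reduction is that $F$ is an antichain: if $\vs=(A,B)\le\vt=(C,D)$ are distinct elements of~$F$, then $D\sub B$, so deleting $\vs$ leaves $\bigcap_{(A',B')\in F} B'$ unchanged, and $F\setminus\{\vs\}$ is a smaller member of~$\F$ inside~$O$ with no more crossings, contradicting the minimal choice of~$|F|$. Now suppose $\vs=(A,B)$ and $\vt=(C,D)$ in~$F$ cross. Submodularity of~$S$ applied to $\vs,\tv\in S$ gives $\vr:=\vs\wedge\tv\in S$ (by the symmetry between $s$ and~$t$, swapping to the other corner if necessary), and consistency of~$O$ forces $\vr\in O$ since $\vr<\vs\in O$. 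Set $F':=(F\setminus\{\vs\})\cup\{\vr\}\sub O$.

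The decisive step is to verify $F'\in\F$. Writing $I:=\bigcap_{(A',B')\in F\setminus\{\vs,\vt\}}B'$, we have
\[
\bigcap_{(A',B')\in F'} B' \;=\; (B\cup C)\cap D\cap I \;=\; B\cap D\cap I \;=\; \bigcap_{(A',B')\in F} B',
\]
where the middle equality uses $C\cap D=\emptyset$, i.e.\ the disjointness of the bipartition~$t$. So $F'\in\F$, and clearly $|F'|\le|F|<n$. Finally, $F$ being an antichain of a consistent orientation in the degeneracy-free universe~$\vU$ forces $\vs\le\xv$ for every $\vx\in F\setminus\{\vs,\vt\}$ that is nested with~$\vs$; since $\vr\le\vs$, this gives $\vr\le\xv$, so $\vx$ is also nested with~$\vr$. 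Combined with $\vr\le\tv$, this yields $d(F')<d(F)$, contradicting the minimal choice of~$F$. The only place where the particular shape of~$\F_m^n$ enters --- and the main obstacle to overcome --- is the $\bigcap B$ calculation above; it works precisely because disjointness of the bipartitions collapses $(B\cup C)\cap D$ to~$B\cap D$, enabling the exchange of a crossing pair for a nested pair without enlarging the intersection.
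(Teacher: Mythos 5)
Your proof is correct, and the two decisive ideas --- uncrossing a pair via submodularity of~$\vS$, and the observation that replacing $B$ with~$B\cup C$ leaves $\bigcap B$ unchanged because $C\cap D=\emptyset$ for a bipartition $(C,D)$ --- are exactly those of the paper. Where you diverge is in the bookkeeping. The paper runs the uncrossing procedure directly on the given~$F$, arguing that each step turns a crossing pair into a $2$-star while never recreating a crossing among pairs already made nested (since replacing an element of a star by something smaller yields another star), and bounds the number of steps by~$\binom{|F|}{2}$. You instead pick $F$ extremal for $(d(F),|F|)$ in the style of the paper's own Lemma~\ref{tangle stars}, first reduce to an antichain, and then derive a contradiction from a single uncrossing step via $d(F')<d(F)$. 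The extremal version is tidier and, notably, closes a small gap in the paper's phrasing: the paper asserts outright that a non-star $F\sub\tau$ must contain a \emph{crossing} pair, but a non-star could a priori also contain comparable elements $\vs\le\vt$; your antichain reduction disposes of that case explicitly (comparability would let you drop $\vs$ without changing the intersection). Your appeal to consistency to get $\vr\in O$ is also fine and simpler than the paper's --- the paper worries about the case $r'=r$ and invokes the triviality of $(\emptyset,V)$, but in your setup $\vr<\vs$ together with the fact that $s$ and~$t$ cross already rules out $r=s$, so the basic consistency clause applies directly; it would be worth one sentence to make that explicit.
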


\begin{proof}
Since $\F^*\sub\F$, it is clear that $\F$-tangles are $\F^*$-tangles. To show the converse, suppose there is an $\F^*$-tangle~$\tau$ that fails to be an $\F$-tangle, because it contains some $F\in\F$ as a subset.

Clearly $F\in\F\setminus\F^*$, so $F$~contains two crossing separations $\vr$ and~$\vs$. Since $\vS$ is submodular, one of their opposite corners $\vr\land\sv$ and $\vs\land\rv$ lies in~$\vS$; let us assume $\vrdash:=\vr\land\sv$ does. Since $\vrdash\le\vr\in\tau$, the consistency of~$\tau$ implies that~$\vrdash$ lies in~$\tau$ (rather than its inverse~$\rvdash$). Indeed, this follows from the definition of consistency if $r'\ne r$. But if $r'=r$ then by $\vrdash\le\vr$ either $\vrdash=\vr\in\tau$ as desired, or $\vrdash < \vr = \rvdash$ with $\vrdash$ small but $\rvdash\in\tau$. Since $(\emptyset,V)$ is the only small separation in~$\vS$ and is in fact trivial, the consistency of~$\tau$ once more implies that $\vrdash\in\tau$.%
   \COMMENT{}

Let $F'$ be obtained from~$F$ by replacing $\vr$ with~$\vrdash$. Note  that $\bigcap_{(A,B)\in F} B = \bigcap_{(A',B')\in F'} B'$ has remained unchanged: although we replaced the set $B$ from $\vr=(A,B)$ in the first intersection with the bigger set~$B'$ from $\vrdash= (A',B')$ in the second, the additional $B'\setminus B$ has empty intersection with the set $D$ from $\vs=(C,D)$, and therefore does not increase the second intersection. Hence our assumption of $F\in\F = \F^n_m$ implies that also $F'\in\F^n_m = \F$.%
   \COMMENT{}

Note that while $\vr$ and~$\vs$ crossed, $\vrdash$~and $\vs$ are nested; indeed, $\{\vrdash,\vs\}$ is a star. Moreover, replacing an element of this star by a smaller separation will yield another star; in particular, it cannot result in another pair of crossing separations. This means that iterating the above uncrossing procedure of replacing in~$F$ an element $\vr\in\tau$ with a smaller separation $\vrdash\in\tau$ in a way that keeps $F$ in~$\F$ will end after at most~$\binom{|F|}{2}$ steps: for every 2-set $\{\vr,\vs\}\sub F$ we will consider only once in this iterated process a pair $\{\vrdash,\vsdash\}$ where $\vrdash$ is either $\vr$ or a replacement of~$\vr$, and $\vsdash$ is either $\vs$ or a replacement of~$\vs$.

Since the above process turns every pair of crossing separations from~$F$ into a 2-star of separations, and a set of separations is a star as soon as all its 2-subsets are stars, the set we turn $F$ into will be a star in~$\F$, an element of~$\F^*$. As it will also still be a subset of~$\tau$, this contradicts our assumption that $\tau$ is an $\F^*$-tangle.
\end{proof}

\begin{lemma}\label{Fshifts}
Let $1\le m\in{\mathbb N}$ and $2\le n\in{\mathbb N}\cup\{\infty\}$. The set $\F^*\!$ of stars in $\F=\F^n_m$ is closed under shifting.
\end{lemma}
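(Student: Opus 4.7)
The plan is to verify directly that, given $\vs \in \vS$ emulating a non-trivial, non-degenerate $\vr \leq \vs$ with $\{\rv\} \notin \F$, a star $\sigma \in \F^*$ with $\sigma \sub \vS \setminus \{\rv\}$, and $\vx \in \sigma$ satisfying $\vx \geq \vr$, the shifted set $\sigma' := \sigma_\vx^\vs$ lies in $\F^*$. By Lemma~\ref{shift is star}, $\sigma'$ is automatically a star contained in $\vS$, so it remains only to check the two conditions defining $\F = \F^n_m$. The cardinality bound $|\sigma'| < n$ is immediate from $|\sigma'| \leq |\sigma| < n$, so the real work lies in bounding the intersection of right-hand sides of the separations in $\sigma'$.

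To carry this out, I would write $\vx = (A_0, B_0)$, enumerate the remaining elements of $\sigma$ as $(A_1, B_1), \ldots, (A_k, B_k)$, and let $\vs = (X, Y)$. Then
$$\sigma' = \{(A_0 \cup X,\, B_0 \cap Y)\} \cup \{(A_i \cap Y,\, B_i \cup X) : 1 \leq i \leq k\},$$
so the intersection of the right sides of $\sigma'$ is $(B_0 \cap Y) \cap \bigcap_{i=1}^k (B_i \cup X)$. Since an element belongs to $\bigcap_i (B_i \cup X)$ exactly when it lies in $X$ or in every $B_i$, this equals $(B_0 \cap Y \cap X) \cup (B_0 \cap Y \cap \bigcap_i B_i)$.

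The substantive observation, and the only place where the particular structure of $\vU$ enters, is that its elements are strict bipartitions of $V$, so $Y = V \setminus X$ and hence $X \cap Y = \emptyset$. Thus the first term in the union above is empty, and the intersection collapses to $B_0 \cap Y \cap \bigcap_i B_i \sub \bigcap_{(A,B)\in \sigma} B$, which has fewer than $m$ elements because $\sigma \in \F$. Hence $\sigma' \in \F$, and since $\sigma'$ is a star, $\sigma' \in \F^*$ as required. I do not anticipate any genuine obstacle here: the argument is a short distributive calculation rendered clean by the disjointness of $X$ and~$Y$.
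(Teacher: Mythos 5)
Your proof is correct and takes essentially the same approach as the paper's: write out the shifted star explicitly, observe that the intersection of its right-hand sides is contained in $\bigcap_{(A,B)\in\sigma} B$ because $X$ and $Y$ are disjoint sides of a bipartition, then invoke Lemma~\ref{shift is star} for the star property. Your version makes the distributive calculation a bit more explicit, but the key ideas and the overall route are identical.
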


\begin{proof}
Suppose that ${\vs}\in\vS$ emulates in~$\vS$ some nontrivial~${\vr}$ not forced by~$\F$. We have to show that for every star $\sigma \sub {\vS} \setminus \{ {\rv} \}$ with $\sigma \in \F^*$ and every ${\vx} \in \sigma$ with ${\vx} \geq {\vr}$ we have $\sigma':=\sigma\!_{{\vx}}^{{\vs}} \in \F^*$.

Let $\vs = (U,W)$, and for $(A,B)\in\sigma$ write $(A',B')\in\sigma'$ for the separation that $(A,B)$ shifts to: if $(A,B) = \vx$ then $(A',B') := (A\cup U,B\cap W)$, while if $(A,B)\in\sigma\setminus\{\vx\}$ then $(A',B') := (A\cap W, B\cup U)$. From these explicit representations of the elements of~$\sigma'$ it is clear that 
 $$\bigcap_{(A',B')\in \sigma'}\!\!\! B'\ = \bigcap_{(A,B)\in \sigma}\!\!\! B'\ \sub \bigcap_{(A,B)\in\sigma}\!\!\! B\,,$$
 since $B'\setminus B\sub U$ for every $(A,B)\in\sigma\setminus\{\vx\}$ while $U\cap B'=\emptyset$ for $(A,B) = \vx$, so that the overall intersection of all the~$B'$ equals that of all the~$B$. And since these sets did not change, nor did their cardinality: as $\sigma\in\F = \F^n_m$ we also have $\sigma'\in\F^n_m = \F$. By Lemma~\ref{shift is star}, this implies~$\sigma'\in\F^*$ as desired.
\end{proof}

Together with Lemmas \ref{Funcrosses} and~\ref{Fshifts}, Theorem~\ref{submodular duality} implies our last main theorem:

     	\begin{FnThm}
     	Let ${\vS} \sub {\vU}$ be a submodular separation system, let $1\le m\in{\mathbb N}$ and $2\le n\in{\mathbb N}\cup\{\infty\}$, and let $\F = \F^n_m$.%
   \COMMENT{}
   Then exactly one of the following two statements holds:
     	\begin{enumerate}[\rm (i)]\itemsep=0pt\vskip-\smallskipamount\vskip0pt
     	\item $S$ has an $\F$-tangle;
     	\item There exists an $S$-tree over $\F^*$.\qed
     	\end{enumerate}
      	\end{FnThm}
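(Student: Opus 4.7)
The plan is to apply Theorem~\ref{submodular duality} directly to the set~$\F^*$ of stars in $\F=\F^n_m$, and then translate the resulting dichotomy for $\F^*$-tangles into the statement about $\F$-tangles using Lemma~\ref{Funcrosses}. So the proof is essentially a straightforward assembly of three ingredients: structural submodularity of $\vS$ (hypothesis), the fact that $\F^*$ is closed under shifting (Lemma~\ref{Fshifts}), and the coincidence of $\F^*$-tangles with $\F$-tangles (Lemma~\ref{Funcrosses}).

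First I would verify the premises of Theorem~\ref{submodular duality} for~$\F^*$. By construction $\F^*$ is a set of stars. To see that it is standard for~$\vS$, recall from the discussion preceding Lemma~\ref{Funcrosses} that $(\emptyset,V)$ is the only small separation in~$\vU$, hence the only possibly trivial element of~$\vS$; its inverse is~$(V,\emptyset)$, and since $\{(V,\emptyset)\}$ trivially has empty intersection of right-hand sides, we have $\{(V,\emptyset)\}\in\F^n_m = \F$, and being a singleton star also $\{(V,\emptyset)\}\in\F^*$. Closure of $\F^*$ under shifting is precisely the content of Lemma~\ref{Fshifts}.

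With these hypotheses in place, Theorem~\ref{submodular duality} applied to $\F^*$ yields that exactly one of the following holds: either $S$ has an $\F^*$-tangle, or there exists an $S$-tree over~$\F^*$. The second alternative is alternative~(ii) of our theorem. For the first, Lemma~\ref{Funcrosses} tells us that the $\F^*$-tangles of $S$ are exactly its $\F$-tangles, so the first alternative is equivalent to alternative~(i). Exclusivity is inherited from Theorem~\ref{submodular duality}: indeed, if both held simultaneously, orienting each separation~$s$ appearing on an edge of the $S$-tree $(T,\alpha)$ according to the $\F$-tangle~$\tau$ would, as in the standard argument, produce an edge $xy\in E(T)$ with $\alpha(x,y),\alpha(y,x)\in\tau$, contradicting consistency of $\tau$ via the star $\alpha(F_t)\in\F^*\sub\F\sub\tau^c$.

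There is no real obstacle to overcome at this stage: all the substantive work has already been done in establishing Lemmas~\ref{Funcrosses} and~\ref{Fshifts} (the technically delicate uncrossing argument and the shifting computation, respectively), and in proving Theorem~\ref{submodular duality} itself, which in turn rested on Lemma~\ref{submod sep sys}. The proof of Theorem~\ref{Fn} is therefore essentially a one-line appeal to these results, which is consistent with the paper's own remark immediately before the statement.
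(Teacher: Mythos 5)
Your proof takes exactly the same route as the paper: it applies Theorem~\ref{submodular duality} with $\F^*$, using Lemma~\ref{Fshifts} for closure under shifting, the observation $\{(V,\emptyset)\}\in\F^*$ for standardness, and Lemma~\ref{Funcrosses} to identify the $\F^*$-tangles with the $\F$-tangles. The only small slip is in your parenthetical sketch of exclusivity, which misstates the standard argument (one locates a sink node $t$ of the $\tau$-oriented tree with $\alpha(F_t)\sub\tau$, not an edge both of whose orientations lie in $\tau$); but as you yourself note exclusivity is already part of the conclusion of Theorem~\ref{submodular duality}, so this is redundant and harmless.
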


     	\end{subsection}

       \begin{subsection}{Phylogenetic trees from tangles of circle separations}\label{sec:phylo}

Finally, let us describe an application of Theorem~\ref{ToTThm} and Theorem~\ref{Fn} in biology. Let $V$ be a set, which we think of as a set of species, or of (possibly unknown) organisms, or of DNA samples. Our aim is to find their Darwinian `tree of life': a~way of dividing $V$ recursively into ever-smaller subsets so that the leaves of this division tree correspond to the individual elements of~$V\!$.

This tree can be formalized by starting with a root node labelled~$\emptyset$ at level~0, a~unique node labelled~$V$ at level~1, and then recursively adding children to each node, labelled~$A\sub V\!$, say, corresponding to the subsets of~$A$ into which $A$ is divided and labelling these children with the subsets to which they correspond. Furthermore, let us label the edges from level~$k-1$ to~$k$ by~$k$.

Every edge~$e$ of this tree naturally defines a bipartition of~$V\!$, to which we assign the label~$k$ of~$e$ as it `order'. Species that are fundamentally different are then separated by a bipartition of low order, while closely related but distinct species are only separated by bipartitions of higher order.

If we draw that tree in the plane, the leaves~-- and hence~$V$~-- will be arranged in a circle, $C$~say. The bipartitions $\{A,B\}$ of~$V$ defined by the edges of the tree are given by {\em circle separations\/}: $A$~and~$B$ are covered by disjoint half-open segments of~$C$ whose union is~$C$.%
   \COMMENT{}

The way in which this tree is found in practice is roughly as follows. One first defines a metric on~$V\!$ in which species $u,v$ are far apart if they differ in many respects. Then one applies some clustering algorithm, such as starting with the singletons $\{v\}$ for $v\in V$ as tiny clusters and then successively amalgamating close clusters (in terms of this distance function) into bigger ones. The bipartitions of $V$ corresponding to pitching a single cluster against the rest of~$V$ will be nested and can therefore be represented as edge-separations of a tree as above, which is then output by the algorithm. If we draw the tree in the plane, the bipartitions then become circle separations of~$V\!$ as earlier.

A~known problem with this approach is that, since every cluster found in this process defines a bipartition of~$V$ that ends up corresponding to an edge of the final tree, inaccuracies in the clustering process immediately affect this tree in an irreversible way. Bryant and Moulton~\cite{BryantMoulton} have suggested a more careful clustering process which produces not necessarily a tree but an outerplanar graph~$G$ on~$V\!$, together with a set of particularly important bipartitions of~$V$ that are not necessarily nested but are still circle separations of~$V\!$ with respect to the outer face boundary of~$G$. The task then is to select from the set $S$ of these bipartition a nested subset that defines the desired phylogenetic tree, or perhaps to generate such a set from~$S$ in some other suitable way such as adding corners of crossing separations already selected.

This is where tangles can help: in a general way, but also in a rather specific way that finds the desired nested set from a set of circular separations of~$V\!$.

Let us just briefly indicate the general way in which tangles can be used for finding phylogenetic trees in a novel way~\cite{TanglesEmpirical}, without the need for any distance-based clustering. As input we need a collection of subsets of~$V\!$ to be used as similarity criteria, such as the set of species~$v\in V$ that can fly or lay eggs, the set of DNA molecules that have base $T$ in position~137, or the set of those organisms that respond to some test in a certain way. Then we define an order function on all the bipartitions of~$V\!$, assigning low order to those bipartitions that do not cut accross many of our criteria sets so as to split them nearly in half. For example, we might count for $s=\{A,B\}$ the number of triples $(a,b,c)$ such that $a\in A$ and $b\in B$ and both $a$ and~$b$ satisfy (are elements of) the criterion~$c$.

This order function is easily seen to be submodular on the universe~$\vU$ of all oriented bipartitions of~$V\!$~\cite{TanglesSocial}. For suitable~$\F$ whose $\F$-tangles are profiles, e.g.\ the $\F=\F^n_m$ with $n>3$ considered in Section~\ref{sec:cluster}, we can then compute the (canonical) tree of tangles as in~\cite{ProfilesNew}, or an $S$-tree over~$\F$ as in~\cite{TangleTreeGraphsMatroids,TangleTreeAbstract} if there are no tangles. In the first case, the tree of tangles for $\F^n_2$ has a good claim to be the phylogenetic tree for the species in~$V\!$, see~\cite{TanglesEmpirical}.

In the concrete scenario of~\cite{BryantMoulton}, we further have the following specific application of tangles as studied in this paper. Let $S$ be the set of circle separations of~$V\!$, taken with respect to its circular ordering found by the current algorithm of~\cite{BryantMoulton}. We can now define $\F$-tangles on this~$S$ just as we did earlier on the set of all bipartitions of~$V\!$, and consider the same order function as earlier.

This order function is not, however, submodular on our restricted set~$S\,$: this would require that $S$ is not just a separation system but a universe of separations, i.e., that corner separations of elements of~$S$ are again in~$S$~-- which is not always the case.%
   \COMMENT{}
   In particular, we do not get a tree-of-tangles theorem for~$S$ from~\cite{ProfilesNew}, or a tangle-tree duality theorem from~\cite{TangleTreeGraphsMatroids,TangleTreeAbstract}.%
   \COMMENT{}%
   \footnote{This is not to say that no submodular order function on $S$ exists that returns the sets~$S_k$ we are interested in as sets~$S_{k'}$ for some other~$k'$. One can indeed construct such a function, but it is neither obvious nor natural.}%
   \COMMENT{}

However, we do get a tree-of-tangles theorem for~$S$ as a corollary of Theorem~\ref{ToTThm}, and a tangle-tree duality theorem as a corollary of Theorem~\ref{Fn}. For this we need the following easy lemma. Let $\vU$ be the universe of all oriented bipartitions of~$V\!$ (see Section~\ref{sec:cluster}), equipped with any submodular order function under which $\{\emptyset,V\}$ has order~0. Let $S$ be the set of all circle separations of~$V\!$ with respect to some fixed cyclic ordering of~$V\!$. 

\begin{lemma}\label{circle submodular}
For every~$k>0$, the set $S_k$ of all circle separations of order~$<k$ is submodular.
\end{lemma}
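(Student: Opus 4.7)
The plan is to identify each oriented circle separation with its corresponding arc in the fixed cyclic ordering of~$V\!$, so that for $\vs,\vt \in S_k$ corresponding to arcs $A,C \sub V\!$, the meet $\vs\wedge\vt$ and join $\vs\vee\vt$ in~$\vU$ become the bipartitions with sides $A\cap C$ and $A\cup C$ respectively. The goal is then to show that at least one of these two corners is itself a circle separation of order~$<k$, and therefore lies in~$S_k$.

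The key step is an elementary combinatorial fact about arcs on a circle: for any two arcs $A,C$ of~$V$, (a) if $A\cup C$ is not an arc then $A\cap C = \emptyset$, and (b) if $A\cap C$ is not an arc then $A\cup C = V\!$. For~(a), note that the union of two arcs on a circle is again an arc whenever they meet, since then it is a connected subset of the circle; so a disconnected union forces disjointness. Statement~(b) is dual: apply~(a) to the complementary arcs $V\setminus A$ and $V\setminus C$, using that complementation swaps unions with intersections, swaps $\emptyset$ with $V\!$, and preserves being an arc.

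Granted this, the lemma follows quickly. If both $A\cup C$ and $A\cap C$ are arcs, then both $\vs\vee\vt$ and $\vs\wedge\vt$ are circle separations, and submodularity of the order function on~$\vU$ gives
\[
|\vs\vee\vt| + |\vs\wedge\vt| \le |\vs|+|\vt| < 2k,
\]
so at least one of them has order $<k$ and lies in~$S_k$. Otherwise, exactly one of the two corners fails to be an arc; but then by~(a) or~(b) the other corner equals $(\emptyset,V)$ or $(V,\emptyset)$, which is a trivial circle separation of order~$0<k$ by the assumption $|\{\emptyset,V\}|=0$, and hence lies in~$S_k$.

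I expect no real obstacle here: the only genuine content is the arc lemma~(a)--(b), which is a standard topological observation once one fixes a clean definition of ``arc'' that accommodates the degenerate cases $A=\emptyset$ and $A=V\!$; everything else reduces to a one-line application of the existing submodularity of the order function on~$\vU$.
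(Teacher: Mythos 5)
Your proof is correct and is essentially the same as the paper's. The paper also notes that $\vr\lor\vs$ is a circle separation unless the arcs for $A$ and $C$ are disjoint with both gaps meeting $V$, in which case $\vr\land\vs=(\emptyset,V)$; you additionally spell out the dual case (intersection not an arc forces union $=V$) via complementation, which the paper leaves implicit, and both proofs then finish identically with submodularity of the order function on~$\vU$ plus the assumption that $(\emptyset,V)$ has order~$0<k$.
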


\begin{proof}
Consider two oriented circle separations $\vr=(A,B)$ and~$\vs=(C,D)$ of~$V\!$. Clearly, $\vr\lor\vs$ is again a circle separation unless the circle segments representing $A$ and~$C$ are disjoint and both segments that join them on the circle meet~$V\!$. In that case, however, the union of the segments representing $B$ and~$D$ is the entire circle, so $\vr\land\vs = (\emptyset,V)$, which is a circle separation.

As $(\emptyset,V)$ has order $0<k$ by assumption, and our order function is submodular on the set of all bipartitions of~$V\!$, this implies that $S_k$ is submodular: given $\vr,\vs\in\vSk$, either both $\vr\lor\vs$ and $\vr\land\vr$ are in~$\vS$ and hence one of them is in~$\vSk$, or one of them is $(\emptyset,V)$ or~$(V,\emptyset)$ and therefore in~$\vSk$.
\end{proof}

Consider any $1\le m\in{\mathbb N}$ and $3 < n\in{\mathbb N}\cup\{\infty\}$.%
   \COMMENT{}
   Let $\F = \F^n_m$ be as defined in Section~\ref{sec:cluster}. Here is our first tree-of-tangles theorem for circle separations:

     	\begin{theorem}\label{CircleToT}
     	For every~$k>0$, the set $S_k$ of circle separations of~$V\!$ of order~$<k$ contains a tree set of separations that distinguishes all the $\F$-tangles of~$S_k$.
      	\end{theorem}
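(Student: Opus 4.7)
The plan is to invoke Theorem~\ref{ToTThm} directly. Two ingredients are needed: first, that $\vSk$ is a submodular separation system, and second, that every $\F$-tangle of $S_k$ is an abstract tangle. Once both are in place, the tree set in $\vSk$ given by Theorem~\ref{ToTThm} (which distinguishes abstract tangles of $S_k$) will automatically distinguish the $\F$-tangles we care about.

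The first ingredient is exactly Lemma~\ref{circle submodular}. For the second, I would prove the containment $\T \subseteq \F$ of families of subsets of~$\vU$; from this it follows at once that every $\F$-avoiding consistent orientation is also $\T$-avoiding, i.e.\ an abstract tangle. So fix $\sigma = \{(A_1,B_1), (A_2,B_2), (A_3,B_3)\} \in \T$, noting that $|\sigma| \leq 3 < n$. Co-smallness of $(A_1,B_1)\vee(A_2,B_2)\vee(A_3,B_3) = (A_1\cup A_2\cup A_3,\; B_1\cap B_2\cap B_3)$ yields $B_1\cap B_2\cap B_3 \subseteq A_1\cup A_2\cup A_3$; but any element of $B_1\cap B_2\cap B_3$ lies in no $A_i$, because $\vU$ consists of bipartitions and so $A_i\cap B_i = \emptyset$. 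Hence $B_1\cap B_2\cap B_3 = \emptyset$, so $\sigma \in \F_m$ for every $m\geq 1$, and combined with $|\sigma| < n$ this gives $\sigma \in \F^n_m = \F$.

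With $\T \subseteq \F$ established, every $\F$-tangle of $S_k$ is an abstract tangle. Applying Theorem~\ref{ToTThm} to the submodular separation system $\vSk$ now produces a tree set in~$\vSk$ that distinguishes all abstract tangles and, \emph{a fortiori}, all the $\F$-tangles of $S_k$. I do not anticipate a substantive obstacle; the only real content is the bipartition computation above, which relies essentially on $A\cap B = \emptyset$ in this universe (in contrast to, for example, the graph-separations universe of Section~\ref{sec:cliques}, where bipartitions are replaced by separations with nonempty separators).
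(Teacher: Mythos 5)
Your proof is correct and takes essentially the same route as the paper: Lemma~\ref{circle submodular} gives submodularity of~$\vSk$, and the observation that $\T \subseteq \F^n_m$ for $m\ge 1$, $n>3$ (the paper phrases this as ``any such triple lies in~$\F$'') shows every $\F$-tangle of~$S_k$ is an abstract tangle, so Theorem~\ref{ToTThm} applies. The only difference is that you spell out the bipartition computation $B_1\cap B_2\cap B_3 = \emptyset$, which the paper leaves implicit.
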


\begin{proof}
    In order to apply Theorem~\ref{ToTThm}, we have to show that all $\F$-tangles of~$S_k$ are abstract tangles, i.e., that they contain no triple $(\vr,\vs,\vt)$ with $\vr\lor\vs\lor\vt = (V,\emptyset)$ (which is the unique co-small separation in~$\vU$). But any such triple lies in~$\F$ for the values of $m$ and~$n$ we specified, so no $\F$-tangle of~$S_k$ contains it.

Lemma~\ref{circle submodular} and Theorem~\ref{ToTThm} thus imply the result.
   \end{proof}

Applying very recent work of Elbracht, Kneip and Teegen~\cite{FiniteSplinters}, we can unify the assertions of Theorem~\ref{CircleToT} over all~$k$, as follows. Let us say that an element $s$ of~$S$ {\em distinguishes\/} two orientations $\rho,\tau$ of subsets of~$S$ if these orient~$s$ differently, i.e., if $s$ has orientations $\vs\in\rho$ and $\sv\in\tau$. If $s$ has minimum order amongst the separations in~$S$ that distinguish $\rho$ from~$\tau$, we say that $s$ distinguishes $\rho$ and~$\tau$ {\em efficiently\/}. Similarly, a set $T\sub S$ {\em distinguishes\/} a set~$\T$ of orientations of subsets of~$S$ {\em efficiently\/} if for every pair of distinct $\rho,\tau\in\T$ there exists an $s\in T$ that distinguishes $\rho$ from~$\tau$ efficiently.

Orientations $\rho$ and~$\tau$ as above are called {\em distinguishable\/} if they are distinguished by some $s\in S$. Note that orientations $\rho$ of~$S_k$ and $\tau$ of~$S_\ell$ for $k\le\ell$ are indistinguishable if and only if $\rho = \tau\cap\vSk$.

Here is our tree-of-tangles theorem for circle separations of mixed order:

\begin{theorem}\label{CircleToTallk}{\rm\cite{FiniteSplinters}}
The set $S$ of all circle separations of~$V\!$ contains a tree set that efficiently distinguishes all the distinguishable $\F$-tangles of subsets $S_k$ of~$S$.
\end{theorem}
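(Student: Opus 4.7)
The plan is to reduce the statement directly to the finite splinter theorem of Elbracht, Kneip and Teegen~\cite{FiniteSplinters}. That theorem takes as input a finite family $(D_i)_{i\in I}$ of target subsets of a separation system and, subject to an uncrossing (splinter) hypothesis, outputs a nested transversal, i.e.\ a nested set of separations meeting every~$D_i$. For every pair $(\rho,\tau)$ of distinguishable $\F$-tangles of subsets $S_k,S_\ell$ of~$S$ as in the statement, I would let $D_{\rho,\tau}\sub S$ consist of all circle separations that distinguish $\rho$ from $\tau$ efficiently. A nested transversal of the family $(D_{\rho,\tau})$ is exactly the tree set sought; to conclude that it is a genuine tree set, observe that $(\emptyset,V)$ is the only small separation of~$\vU$ and hence the only possible trivial element of~$S$, so an inclusion-minimal nested transversal automatically avoids trivial and degenerate separations.

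The heart of the proof is then the verification of the splinter hypothesis. Given crossing $s_1\in D_{\rho_1,\tau_1}$ and $s_2\in D_{\rho_2,\tau_2}$ of efficient distinguishing orders $k_1\le k_2$, Lemma~\ref{circle submodular} guarantees that at least one of their four corners is again a circle separation, while submodularity of the fixed order function on~$\vU$ guarantees that some corner has order at most~$k_1$. A short case analysis, combining these two statements, produces a circle-separation corner~$\vr$ of order at most~$k_1$. Now I would invoke that every $\F$-tangle for the specified values of $m$ and~$n$ is an abstract tangle, and hence a profile, as already observed in the proof of Theorem~\ref{CircleToT}; this allows an uncrossing argument identical in spirit to the one driving the proof of Theorem~\ref{tangle tree reg prof} to show that $\vr$ (after possibly replacing it by the dual corner) still efficiently distinguishes one of the two pairs $(\rho_i,\tau_i)$. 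That is precisely the splinter hypothesis required by~\cite{FiniteSplinters}.

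The main obstacle is the bookkeeping in the corner selection step: one needs the corner to be simultaneously (a) a circle separation, which is restrictive because $S$ is only a separation system and not a universe; (b) of order at most the minimum-distinguisher order of the pair it is meant to serve, so that efficient distinguishing survives; and (c) oriented by both members of that pair towards a common region, so that it actually lies in the right $D_{\rho_i,\tau_i}$. Point (a) is handled by Lemma~\ref{circle submodular}, point (b) by submodularity of the order function together with the choice $k_1\le k_2$, and point (c) by the profile property, which lets a standard consistency argument decide the orientation of~$\vr$ in each of $\rho_i,\tau_i$ from its relation to $s_1,s_2$. Once this single uncrossing lemma is in place, the finite splinter theorem of~\cite{FiniteSplinters} takes over and delivers the desired tree set of circle separations efficiently distinguishing all the distinguishable $\F$-tangles of the various~$S_k$.
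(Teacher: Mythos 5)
The paper does not prove this theorem: it carries the attribution~{\rm\cite{FiniteSplinters}} and is imported wholesale from Elbracht, Kneip and Teegen, so there is no in-paper argument to compare against. Your route---invoke the splinter theorem of~\cite{FiniteSplinters} with targets $D_{\rho,\tau}$ the sets of efficient distinguishers, and verify the splinter hypothesis by a corner argument using Lemma~\ref{circle submodular} and the order function---is the natural one and almost certainly what that reference does. Two refinements are worth making. First, the tension you flag between (a) and (b) does not arise: in the exceptional case in the proof of Lemma~\ref{circle submodular}, where $\vr\vee\vs$ fails to be a circle separation because the segments for $A$ and $C$ are disjoint with both gaps meeting~$V$, one has $A\cap C=\emptyset$, hence $A\sub D$ and $C\sub B$, hence $\vr\le\sv$ and $r,s$ are \emph{nested}. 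So when $s_1$ and~$s_2$ cross, \emph{all four} corners are automatically circle separations, and point~(a) is free. Second, ``submodularity guarantees that some corner has order at most~$k_1$'' is overstated as written: submodularity only bounds the \emph{sum} of the orders of opposite corners by $k_1+k_2$, which yields a corner of order at most $(k_1+k_2)/2$, not $k_1$. The actual splinter verification needs the standard efficiency-preserving uncrossing case analysis for profiles (as in~\cite{ProfilesNew}; note this is a different argument from the nested-set replacement in the proof of Theorem~\ref{tangle tree reg prof}, which you cite): either some corner of order at most $k_1$ distinguishes $\rho_1,\tau_1$, or the two submodularity inequalities force a corner of order at most $k_2$ distinguishing $\rho_2,\tau_2$, with the profile property used in each case to control how $\rho_i,\tau_i$ orient the chosen corner. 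With these adjustments your sketch is sound and matches the expected deduction from~\cite{FiniteSplinters}.
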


Elbracht, Kneip and Teegen~\cite{FiniteSplinters} showed that the tree set in Theorem~\ref{CircleToTallk} can in fact be chosen {\em canonical\/}, i.e., so that every separation system automorphism (see~\cite{AbstractSepSys}) of~$\vS$ acts on~$\vT$ as a set of automorphisms of~$T$.

\goodbreak

Finally, our tangle-tree duality theorem for circle separations:

     	\begin{theorem}\label{CircleTTD}
     	For every~$k>0$, the set $S_k$ of circle separations of~$V\!$ of order~$<k$ satisfies exactly one of the following two assertions:
     	\begin{enumerate}[\rm (i)]\itemsep=0pt\vskip-\smallskipamount\vskip0pt
     	\item $S_k$ has an $\F$-tangle;
     	\item There exists an $S_k$-tree over $\F^*$.
     	\end{enumerate}
      	\end{theorem}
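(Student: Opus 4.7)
The plan is to derive Theorem~\ref{CircleTTD} as an immediate corollary of Theorem~\ref{Fn}, once we verify its sole structural hypothesis for~$S_k$: that $S_k$ is a submodular separation system inside a universe.

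First I would observe that $S_k$ is a separation system inside the universe~$\vU$ of all oriented bipartitions of~$V\!$. The partial order and the involution on~$\vU$ restrict to~$S_k$; closure of~$S_k$ under the involution is automatic, because the underlying (unoriented) property of being a circle separation is orientation-free and the order function on~$\vU$ is symmetric, so that $|{\vs}|=|{\sv}|$. By Lemma~\ref{circle submodular}, $S_k$ is then structurally submodular as a subset of the lattice underlying~$\vU$. This is precisely the hypothesis of Theorem~\ref{Fn}, which I would then apply with the specified $m$ and $n$ and with $\F = \F^n_m$ to read off the desired dichotomy, the mutual exclusiveness of~(i) and~(ii) being part of Theorem~\ref{Fn}.

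There is no serious technical obstacle. The conceptual point worth highlighting is the following: as noted in the paragraph preceding the theorem, $S_k$ is \emph{not} closed under taking corners in~$\vU$, so the submodular order function on~$\vU$ does not descend to a submodular order function on~$S_k$, and the classical tangle-tree duality theorems of~\cite{TangleTreeAbstract,TangleTreeGraphsMatroids} cannot be invoked directly. It is the entire abstract apparatus of structurally submodular separation systems developed in this paper, packaged into Theorem~\ref{Fn}, that reduces the present result to a one-line application.
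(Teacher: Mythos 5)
Your proposal is correct and takes exactly the route the paper does: invoke Lemma~\ref{circle submodular} to see that $S_k$ is a submodular separation system in~$\vU$, and then apply Theorem~\ref{Fn}. The paper's proof is the one-liner ``Apply Lemma~\ref{circle submodular} and Theorem~\ref{Fn}.''
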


\begin{proof}
   Apply Lemma~\ref{circle submodular} and Theorem~\ref{Fn}.
\end{proof}

       \end{subsection}

     \end{section}

\begin{section}*{Acknowledgement}
     
Geoff Whittle has informed us that his student Jasmine Hall has obtained explicit proofs of Theorems \ref{CircleToT} and~\ref{CircleTTD} by re-working the theory of graph and matroid tangles given in~\cite{BranchDecMatroids,GMX}.

\end{section}

\bibliographystyle{plain}
\bibliography{collective}

     \end{document}